\theoremstyle{plain}
\newtheorem{lemma}{Lemma}[section]
\newtheorem{proposition}{Proposition}[section]
\newtheorem{corollary}{Corollary}[section]
\newtheorem{theorem}{Theorem}[section] 
\newtheorem{assumption}{Assumption}[section]
\theoremstyle{remark}
\newtheorem{remark}{Remark}[section]
\theoremstyle{definition}
\newtheorem{notation}{Notation}[section]
\newtheorem{definition}{Definition}[section]
\newtheorem{claim}{Claim}[section]
\providecommand{\msc}[1]{{\small \textit{Mathematics Subject Classification ---} #1}}
\newlist{todolist}{itemize}{2}
\setlist[todolist]{label=$\square$}
\title[Stochastic Ericksen-Leslie equation]{Existence, uniqueness and regularity for the stochastic Ericksen-Leslie equation}
\date{\today}
\author{Anne De Bouard}
\address[A.~De Bouard]{CMAP, Ecole polytechnique, CNRS, Institut Polytechnique de Paris, 91128 Palaiseau, France}
\email{anne.debouard@polytechnique.edu}
\author{Antoine Hocquet}
\address[A. Hocquet]{Institut f\"ur Mathematik,  Technische Universit\"at Berlin, Stra\ss e des 17. Juni 136, D-10623 Berlin, Germany}
\email{antoine.hocquet@wanadoo.fr}
\author{Andreas Prohl}
\address[A.~Prohl]{Mathematisches Institut, Universit\"at T\"ubingen, Auf der Morgenstelle 10, D-72076  T\"ubingen, Germany}
\email{prohl@na.uni-tuebingen.de}
\thanks{Financial support by the DFG via Research Unit FOR 2402 is gratefully acknowledged.}
\DeclareMathOperator*{\esssup}{ess\,sup}
\DeclareMathOperator*{\spt}{Supp}
\DeclareMathOperator{\id}{id}
\renewcommand{\div}{\ensuremath{\mathrm{div}}}
\renewcommand{\d}{\ensuremath{\mathrm{\hspace{0.035em}d}}} 
\newcommand{\N}{\ensuremath{\mathbb N}} 
\newcommand{\R}{\ensuremath{\mathbb R}} 
\renewcommand{\O}{\ensuremath{{\mathbb T^2}}}
\renewcommand{\P}{\ensuremath{\mathbb P}}
\newcommand{\E}{\ensuremath{\mathbb E}}
\newcommand{\C}{\ensuremath{\psi}}
\newcommand{\CC}{\ensuremath{{\boldsymbol\psi}}}
\newcommand{\ee}{\ensuremath{{\mathbf e}}}
\newcommand{\vs}{\ensuremath{{\Hat v}}}
\newcommand{\Bus}{\ensuremath{\mathbf{\Hat v}}}
\newcommand{\us}{\ensuremath{{\Hat u}}}
\newcommand{\Bzs}{\ensuremath{\mathbf{\Hat u}}}
\newcommand{\zetas}{\ensuremath{{\Hat\zeta}}}
\newcommand{\Bzetas}{\ensuremath{\boldsymbol{\Hat\zeta}}}
\newcommand{\Ws}{\ensuremath{{\Hat W}}}
\renewcommand{\tt}{\ensuremath{\mathscr T}} 
\newcommand{\ttt}{\ensuremath{\mathscr {\hat T}}} 
\newcommand{\X}{\ensuremath{\mathfrak X}} 
\newcommand{\T}{\ensuremath{{I}}}
\newcommand{\V}{\ensuremath{\mathscr V}}
\newcommand{\U}{\ensuremath{\mathscr U}}
\renewcommand{\L}{\ensuremath{\mathbb L}} 
\newcommand{\W}{\ensuremath{\mathbb W}\hspace{0.035em}} 
\newcommand{\LL}{\ensuremath{\mathbf L}} 
\newcommand{\WW}{\ensuremath{\mathbf W}} 
\renewcommand{\H}{\ensuremath{\mathbb H}}
\newcommand{\HH}{\ensuremath{\mathbf H}}
\newcommand{\loc}{\ensuremath{{\mathrm{loc}}}}
\newcommand{\hs}[1]{\ensuremath{{\mathscr {L}_2 (L^2,H^{#1})}}}
\newcommand{\vvs}{\ensuremath{{\Hat{\Hat{v}}}}}
\newcommand{\uus}{\ensuremath{{\Hat{\Hat{u}}}}}
\begin{document}

\begin{abstract}
We investigate existence and uniqueness for the liquid crystal flow driven by colored noise on the two-dimensional torus.
After giving a natural uniqueness criterion, we prove local solvability in $L^p$-based spaces, for every $p>2.$
Thanks to a bootstrap principle together with a Gy\"ongy-Krylov-type compactness argument, this will ultimately lead us to prove the existence of a particular class of global solutions which are partially regular, strong in the probabilistic sense, and taking values in the ``critical space'' $L^2\times H^1.$
\end{abstract}

\maketitle

{\small \textit{Keywords---} stochastic partial differential equations, liquid crystals, non-linear parabolic equations, harmonic maps}

\msc{60H15,76A15,35K55,58E20}


\tableofcontents
  
\section{Introduction}
\label{sec:intro}
\subsection{Motivations}
Let $T>0$ and ${\O = (0,1)^2}$.
The simplified incompressible Ericksen-Leslie equations (EL) model the combined dynamics of a director field $u: [0,T]\times \O \rightarrow {\mathbb S}^2$ (also refered to as ``molecular orientation'') and a velocity field $v: [0,T]\times \O \rightarrow {\mathbb R}^2$ of a thermotropic nematic liquid crystal which is contained in the domain $\O$. The director field gives an averaged orientation of the constituent molecules (``mesogens") of the liquid crystal phase to e.g.~predict the evolution of a texture that exhibits several defects, and which could possibly annihilate or nucleate due to a nontrivial fluid flow dynamics \cite{RD1}. In this work, we include a stochastic forcing
term to the simplified Ericksen-Leslie equations to account for thermal fluctuation effects \cite{LM1,BMA1,BHR1}. The noise acts on the molecular orientation so that the system is written as
\begin{equation}
\label{SEL}
\tag{SEL}
\left\{
\begin{aligned}
&\d v - \bigl(\Delta v-v\cdot \nabla v - \div(\nabla u \odot\nabla u ) -\nabla \pi \bigr)\d t = 0
\\
&\quad \div v=0
\\
&\d u - \bigl(\Delta u - v\cdot \nabla u + |\nabla u |^2u \bigr)\d t = \nu u \times\circ \d  W _t
\\
&\quad |u|_{\R^3}=1\enskip \text{a.e.}
\end{aligned}
\right.
\qquad \text{in}\enskip (0,T]\times \O,
\end{equation}
where $\pi$ is the hydrostatic pressure while $\nu \geq 0$ denotes a constant, and we assume that $(v(0),u(0))$ is given with finite energy.
In the above, $ W(\omega,t,x)$ stands for an $\R^3$-valued, spatially correlated Wiener process which models random forces on the mesogens in the liquid crystal. The noise term appears inside a Stratonovitch integral, which allows the process $u \equiv u(\omega,t,x)$ to be ${\mathbb S}^2$-valued. Throughout the paper, we will denote by 
$(\nabla u \odot \nabla u)$ the $2\times 2$ matrix whose entries are given by
\[
(\nabla u \odot \nabla u)_{i,j}= \langle \frac{\partial u}{\partial x_i}, \frac{\partial u}{\partial x_j}\rangle_{{\mathbb R}^3},\quad \text{for}\enskip 1 \leq i,j \leq 2
\,.
\]
The velocity process $v \equiv v(\omega,t,x)$ is divergence-free, and is driven by a non-trivial director field in the random Navier-Stokes equation. 
The system \eqref{SEL} is supposed to model (sall-scale) fluctuating rotations of elongated molecules which are embedded into a liquid; 
a major motivation here is to study the stability of texture in the nematic phase, and mechanisms which trigger changes in the orientation of the director field.

The system (EL) of nonlinear PDEs (i.e., $\nu  = 0$ in (\ref{SEL})) couples the convected harmonic map flow
with the incompressible Navier-Stokes equation through the term $-{\rm div}(\nabla u \odot \nabla u)$. Its non-trivial interplay is essentially due to the latter quadratic term, since weak solutions of the harmonic map heat equation (i.e., $v\equiv 0$ and $\nu = 0$  in (\ref{SEL})$_3$) might exhibit a singular behavior at finite time $T_1 >0$. According to \cite{St1,St2}, such a singularity manifests as concentration of the ``local energy"
$\sup_{x \in {\O}} \frac12\int_{B_{\rho }(x)}(|v(t,y)|^2+|\nabla u(t, y)|^2)\, {\rm d}y$ of the regular local solution as $t$ gets close to $T_1$; 
this local solution may, however, be continued  to a partially regular global weak solution, by choosing the weak limit 
$w-\lim_{t \uparrow T_1}  u(t, \cdot)$ in $H^1(\O,{\mathbb S}^2)$ as new initial value for times $t \geq T_1$. This construction was used by Lin and co-authors in \cite{LLW1,LW2} and Hong in \cite{Ho1}. A partially regular weak solution of (EL) was constructed, for which there exist at most finitely many times $0 < T_1 < \ldots <  T_L < \infty$ such that the local energy concentrates, i.e.,
\begin{equation}\label{el-1}
\liminf_{t \uparrow T_j} \sup_{x \in {\O}} \frac12\int_{B_\rho (x)} \bigl[ \vert v(t, y)\vert^2 + \vert \nabla u(t, y)\vert^2\bigr]\, {\rm d}y \geq \varepsilon _1, \qquad \forall\, \rho  > 0\, ,
\end{equation}
for some geometric quantity $\varepsilon _1>0$ (depending only on the domain).
We also mention the works of Lin and Liu \cite{LL1,LL2} (see also \cite{W1}) where (EL) is approximated via a Ginzburg-Landau penalization term, allowing to relax the constaint $\vert {u}\vert=1.$
In this setting, the third equation in (\ref{SEL}) changes, for some positive $\epsilon$, to
\begin{equation}
\label{el-pen1}
{\rm d}u^{\epsilon} = \Bigl(  \Delta u^\epsilon -[v^{\epsilon} \cdot\nabla] u^{\epsilon} - \frac{1}{\epsilon} f(u^{\epsilon})\Bigr) {\rm d}t + \nu\, u^{\epsilon}  \times \circ {\rm d} W,
\qquad \mbox{\rm in } \O \times (0,\infty)
\end{equation}
where $f(x) = \nabla F(x)$ and $F(x) = \frac{1}{4} \int_{{\O}} \bigl\vert \vert x\vert^2 - 1\bigr\vert^2\, {\rm d}x$. For $\nu = 0$, the director field $u^{\epsilon}$ is more regular, which allows to construct a weak solution by a  Galerkin method through uniform bounds for the related energy \cite{LL1,LL2}; however, passing to the limit $\epsilon \downarrow 0$ is an open problem, mostly due to the quadratic term in the first equation in (\ref{SEL}) that was discussed above; see also \cite[Thm.~7.1]{LL2} for the deterministic case. \\

Solvability of the SPDE given by a stochastic perturbation of the penalized (EL) equation is established in \cite{BHR1}; the authors use the Lyapunov structure of the problem
to construct a global strong solution by continuation of a local in time mild solution, and It\^o's formula
to verify ${\P}$-a.s.
non-negativity of $1- \vert u\vert$ in space and time; a weak martingale solution to the 3D
system is constructed by a Galerkin method in \cite{BHR2}, and it is is shown to be the unique strong
solution in the two-dimensional setting. All these results were obtained for $\epsilon >0$ fixed. \\

In this work, we construct a partially regular, global solution for \eqref{SEL},
which is strong in the usual probabilistic sense, and which for any time lies in the critical space $L^2(\O;\R^2)\times H^1(\O;\R^3),$
see Definition \ref{def:mart_sol} below. In addition, we will see that the solutions of \eqref{SEL} are unique under some integrability property which, to the best of our knowledge, is new even in the deterministic context (it suffices to let $\nu =0$ in \eqref{SEL}).
Thanks to a classical interpolation inequality (see \eqref{ineq:interp}), the partially regular solutions constructed above fulfill (correspondingly) the integrability condition \eqref{int_property}, which implies in turn that these solutions are unique in their class (as in the case of the stochastic harmonic map flow in \cite{H1}).

\bigskip

An interesting extension of the model \eqref{SEL} could include an additive noise term $\d V_t$ in the right hand side of the velocity equation.
In fact if $V$ is a divergence-free, $L^2(\O;\R^2)$-Wiener process, it is easy to see that the proofs below adapt mutatis mutandis (modulo notational difficulties).
Since the paper is already rather technical, we here stuck to \eqref{SEL} and leave the proof of such an extension to the reader.
We nevertheless point out that, due to the important scale differences between the velocity and the molecular orientation, our model \eqref{SEL} is physically relevant, as for instance discussed in \cite{LM1}. Similarly, it should be possible to deal with more general domains and boundary conditions, but we chose to restrict ourselves to the torus for simplicity.

\bigskip

\textbf{Organization of the paper.}
Section \ref{sec:notation} introduces the used notations, while in Section \ref{sec:prelim} we define two different notions of solution and provide our main solvability results (theorems \ref{thm:uniqueness} and \ref{thm:main}).
For the reader's convenience, in Section \ref{sec:strategy} we will explain the main ideas in the proof of Theorem \ref{thm:main} and provide a ``sketch of proof".
The proof of Theorem \ref{thm:uniqueness} will be given in Section \ref{sec:uniqueness}.
Section \ref{sec:solvability} shows local solvability for \eqref{SEL}, using a fixed point argument in the case where the data are ``subcritical".
In Section \ref{sec:apriori}, we derive a priori estimates, while a bootstrap principle will be shown in Section \ref{sec:bootstrap}. The proof of Theorem \ref{thm:main} will be addressed in Section \ref{sec:convergence}.
Finally, computational details related to It\^o-Stratonovitch corrections of the form \eqref{ito_strato} will be given in Appendix \ref{app:trace}.

\subsection{Notation}
\label{sec:notation}

Throughout the paper the symbol $T\in(0,\infty)$ refers to a fixed, deterministic time horizon.
If $u,v$ are measurable maps, the notation ``$u\times w$'' refers to the pointwise vector product, namely for each $i\in \{1,2,3\}\simeq\mathbb Z/3\mathbb Z$ we let
$(u\times w)^i:= u^{i +1}w^{i +2}-u^{i +2}w^{i +1}$.

For $p\in[1,\infty],$ we will denote by $L^p$ the usual Lebesgue spaces.
We will make use of the Sobolev-Slobodeckij spaces $W^{\alpha ,p}$ defined as usual for $\alpha \geq 0$ and by duality as $W^{\alpha ,p}=(W^{-\alpha ,\frac{p}{p-1}})^*$ whenever $p\in(1,\infty]$ and $\alpha <0.$ For notational simplicity, we also denote $H^\alpha :=W^{\alpha ,2},$ for $\alpha \in\R.$
Throughout the paper, we denote by $\O\equiv (0,1)^2$ the two-dimensional torus,
and for $d\geq 1$ the notations
\[
L^p(\O;\R^d),\quad W^{\alpha ,p}(\O;\R^d), \quad H^\alpha (\O;\R^d)\,,
\]
will be used as shorthands for the spaces
$L^p_{\mathrm{per}}(\R^2;\R^d),$ $W^{\alpha ,p}_{\mathrm{per}}(\R^2;\R^d),$ $H^\alpha _{\mathrm{per}}(\R^2;\R^d),$
consisting of $1$-periodic elements $f:\R^2\to\R^d$ that belong to the corresponding local Sobolev space (endowed with the appropriate topology).
For any square-integrable $f,g:\O\to\R^d,$ we denote
\[
\langle f,g\rangle:=\int_{\O} f(x)\cdot g(x)\d x\equiv\sum\nolimits_{i=1}^d\int_{(0,1)^2} f^i(x)g^i(x)\d x,
\]
and we denote by the same bracket the bilinear mapping
\[
\langle \cdot ,\cdot \rangle : (W^{\alpha ,p}(\O;\R^d))^*\times W^{\alpha ,p}(\O;\R^d)\longrightarrow \R,\quad  (f,g)\longmapsto \langle f,g\rangle:= f(g).
\]
Given a Banach space $E,$ the space of continuous paths $u:[0,T]\to E,$ endowed with the supremum norm will be denoted by $C(0,T;E).$ If $p\in[1,\infty)$ we similarly denote
$L^p(0,T;E)$ the space of Bochner $p$-integrable functions with values in $E.$

\bigskip

Following \cite{temam1984navier}, we introduce the linear space 
\[
\mathcal V:= \{f\in C^\infty(\O;\R^2),\enskip  \div f=0\}\,.
\]
For $p\in(1,\infty]$ and $\alpha \in\R,$ we denote by $\W^{\alpha ,p}$ the completion of $\mathcal V$ with respect to
the norm of $W^{\alpha ,p}(\O;\R^2),$ and we further let $\H^\alpha :=\W^{\alpha ,2}.$
For $\alpha =0$ we will also use the notation $\L^p:=\W^{0,p}.$ 
Similarly, and in order to distinguish between the velocity component $v$ and the molecular orientation $u,$ we adopt the notations
$\WW^{\alpha ,p}:=W^{\alpha ,p}(\O;\R^3),$
$\HH^{\alpha }:=W^{\alpha ,2}(\O;\R^3),$
and $\LL^p:=L^p(\O;\R^3).$

Given a two-dimensional vector field $f\in L^2(\O;\R^2),$ we denote by
$
\mathscr P f
$
its linear projection onto the space $\L^2$ and the same notation is used for its unique extension as a linear map $\mathscr P:W^{\alpha ,p}(\O;\R^2)\to \W^{\alpha ,p}$ for every $\alpha\in\R$ and $p\in (1,\infty].$
For convenience, note that $\mathscr Pf$ is explicitly given on the torus by the formula
\begin{equation}
\label{fourier}
\hat f(k)= \left(\id-\frac{kk^T}{k_1^2+k_2^2}\right)\hat f(k)\,,\quad k\equiv(k_1,k_2)\in\mathbb Z^2\,,
\end{equation}
where $\hat f(k):= \int_{\O}f(x)\exp(2i\pi k\cdot x)\d x$ denotes the spatial Fourier Transform, and we further recall that $\mathscr P$ is continuous (see for instance \cite{lemarie2002recent}). 

When $p\in(1,\infty]$ we further denote by $A: D(A)\subset \W^{-1,p}\to \W^{-1,p}$ the Stokes operator, that is the operator defined as 
\[
Av=-\mathscr P \Delta v,\quad \text{for}\enskip 
v\in D(A):= \W^{1,p}\,.
\]

If in addition $I$ denotes a fixed time-interval we define the Banach spaces
\begin{equation}
\label{nota:V_U}
\V^{\alpha }_I:= C(I;\H^{\alpha })\cap L^2(I;\H^{\alpha +1}),
\quad \text{and}\quad 
\U^{\alpha }_I:= C(I;\HH^{\alpha })\cap L^2(I;\HH^{\alpha +1}).
\end{equation}
The notation ``$v\in\V_{\mathrm{loc};I}^{\alpha }$'' means that $v$ belongs to $\V_{J}^{\alpha }$ for every compact interval $J\subset I.$ Whenever $I=[0,T]$ for some deterministic $T>0,$ we also use the abbreviation $\V_T^{\alpha }:=\V_{[0,T]}^{\alpha }.$ We will use similar notations for $\U.$

\bigskip

Given a Banach space $B,$ the set of random variables on a probability space $(\Omega ,\mathcal A,\P)$ (i.e.\ measurable maps from $\Omega \to B$ w.r.t.\ $\mathcal A$) will be denoted by $L^0(\Omega;B).$
If $0\leq \tau_1<\tau _2 \leq T$ are stopping times with respect to some filtration on $\Omega$, we use the notation
\[
v\in L^0(\Omega ;\V^\alpha _{\mathrm{loc};[\tau _1,\tau _2)})
\]
if and only if $v(\cdot \wedge\sigma )\in L^0(\Omega ;\V^\alpha _{[\tau _1,T]})$ for every stopping time $\tau _1\leq \sigma <\tau _2,$ and we define the space $\U^\alpha_{\mathrm{loc};[\tau _1,\tau _2)}$ in a similar fashion.
Similarly, if $z:\Omega \times[0,T]\to X$ is a stochastic process with values in some Banach space $X,$
$\tau >0$ is a stopping time, and $q\in[1,\infty],$ we write $z\in L^q(\Omega ; C([0,\tau );X))$ to indicate that the stopped process $z(\cdot \wedge \sigma )$ belongs to $L^q(\Omega;C(0,T;X))$ for any stopping time $\sigma <\tau .$

Given a Hilbert space $K,$ we denote by 
$\mathscr L_2(L^2,K)$ the class of Hilbert-Schmidt linear maps $\Theta :L^2\to K,$ i.e.\ such that
\[
|\Theta |_{\mathscr L_2(L^2,K)}^2:=\sum_{l\in\N}|\Theta f_l|_{K}^2<\infty\,,
\]
where in the sequel $(f_l)_{l\in\N}$ is a given orthonormal basis of the Hilbert space $L^2(\O;\R).$
Similarly, given a Banach space $Y,$
we denote by $\gamma (Y)$ the space of $\gamma $-radonifying operators from $L^2$ to $Y,$ namely $\Theta \in\gamma (Y)$ if and only if
\[
|\Theta |^2_{\gamma (Y)}:= \int_{\tilde\Omega }\left|\sum_{l\in\N}\gamma _l(\tilde \omega )\Theta f_l \right|^2_Y\mathbb{\Hat P}(\d \hat\omega )<\infty\,,
\]
for every independent, identically distributed normal family $(\gamma _l)_{l\in\N}$ on some probability space $(\Hat\Omega ,\mathcal{\Hat A},\mathbb{\Hat P}).$

\subsection{Assumption on the noise and main results}
\label{sec:prelim}

Before we introduce an appropriate notion of solution for \eqref{SEL}, we need to make some assumptions on the noise term.
In what follows, we will denote by $(\Omega,\mathcal A,\P,(\mathcal F_t)_{t\in[0,T]})$ a filtered probability space satisfying the usual assumptions.

\begin{assumption}
\label{ass:W}
Assume that we are given $\C\in\hs{1}$, and let $W:\Omega \times[0,T]\to \HH^1$
be the $(\mathcal F_t)$-adapted Wiener process given by the infinite series
\begin{equation}
\label{nota:eta}
 W(\omega ,t,x)=\sum_{l\in\N}\mathbf B_l(\omega ,t)\C f_l(x)\,,
\end{equation} 
where $(f_l)_{l\in\N}$ denotes a fixed complete orthonormal system for the Hilbert space $L^2(\O;\R),$
while $(\mathbf B_l)_{l\in\mathbb N}$ is a given family of independent and identically distributed Brownian motions in $\R^3.$
\end{assumption}

In the sequel, we will make use of the notation
\begin{equation}
\label{nota:psi_k}
\C_l:=\C f_l\in L^2(\O;\R),
\end{equation}
while for $j=1,2,3,$ we set
\begin{equation}
\label{psi_l_j}
\CC _l^j:=\C_l\ee _j \in \LL^2,
\end{equation} 
where 
\begin{equation}
\label{canonical_R3}
(\ee_1,\ee_2,\ee_3)\enskip 
\text{is the canonical basis of $\R^3.$}
\end{equation} 
Note that Assumption \ref{ass:W} leads to the following expression for the covariance of $W$:
\begin{equation}
\label{covariance_law}
\E\left[\langle  W(t), a \rangle\langle  W(s), b\rangle \right]
= \min(t,s)\sum\limits_{i,j=1}^3\langle\C^* a^i,\C^* b^j \rangle_{L^2(\O;\R)},
\end{equation} 
for all $ a,b\in \LL^2$,
and every $s,t\in [0,T]^2.$

In order to define a solution $(v,u)$ of \eqref{SEL}, it is convenient to switch to an It\^o equation.
For $\Phi \in C^1(L^2;\mathscr {L}_2(L^2,L^2)),$ we have the It\^o-Stratonovitch conversion formula:
\begin{equation}
\label{ito_strato}
\int\Phi (u)\circ \d  W 
=\int\Phi (u)\d  W  + \frac{1}{2}\int\sum_{l\in \N}\sum_{j=1}^3 [\Phi '(u)\cdot (u\times \CC_l^j)\big](\CC_l^j)\d t,
\end{equation}
where $\CC_l^j$ is as in \eqref{psi_l_j} and here ``$\circ$" denotes the usual Stratonovitch product.
In particular, an immediate computation using \eqref{geometric_fact} gives the formula $\int u\times \circ \d W= \int u\times \d W - \int F_\C u \d t$ where we denote
\begin{equation}
\label{ito_correction}
F_\C(x):=-\sum_{l\in\N}\C_l(x)^2,\quad x\in \O.
\end{equation}

\begin{definition}
\label{def:mart_sol}
Let $\mathfrak P$ be a filtered probability space $(\Omega ,\mathcal A,\P,(\mathcal F_t)_{t\in[0,T]}),$ and denote by
$W(\omega ,t,x)$ an $\LL^2$-Wiener process whose covariance verifies \eqref{covariance_law} for some $\C\in\mathscr {L}_2(L^2,L^2).$

We will call $(v,u)$ a \emph{solution} with respect to $(\mathfrak P,W)$ if the following properties are fulfilled:
\begin{enumerate}[label=(\roman*)]
 \item\label{sol_i} $(v,u):\Omega \times [0,T]\to \H^{-1}\times \LL^2$ is a progressively measurable process with respect to the filtration $(\mathcal F_t)_{t\in[0,T]},$ such that in addition
 \[
 v\in L^0(\Omega ;\V^{-1}_{\mathrm{loc};[\tau _1,\tau _2)}),\quad \quad 
 u\in L^0(\Omega;\U^{0}_{\mathrm{loc};[\tau _1,\tau _2)}),\quad \quad 
 \]
 (in particular we have $\div v=0$ by definition of the spaces $\V^{-1}$ and $\H^{-1}$);
 \item\label{sol_ii} the following constraint holds on $u$:
 \begin{equation}
 \label{sphere_constraint}
 |u(\omega ,t,x)|=1\quad \text{for almost every}\quad \omega ,t,x\in \Omega \times[0,T]\times \O;
 \end{equation} 
 \item\label{sol_iii} we have 
 \begin{equation}
\label{moment_drift}
 \E\left[\int_0^T\big(|v\cdot \nabla v|^2_{H^{-2}}+|\nabla u\odot\nabla u|^2_{H^{-1}} + |\Delta u+u|\nabla u|^2|^2_{H^{-1}}\big)\d s\right]
 <\infty\enskip ;
 \end{equation} 
\item\label{sol_iv} for every $t\in[0,T],$ $\P$-a.s.:
\begin{align}
\label{variational_identity_1}
& v(t) - v(0) + \int_{0}^t \big[Av + \mathscr P(v \cdot \nabla v + \div\nabla u \odot \nabla u) \big]\d s = 0\,
\\
 \intertext{and}
 \label{variational_identity_2}
& u(t)-u(0) - \int_{0}^t \big[ \Delta u +  \vert \nabla u\vert^2 u- v \cdot \nabla u -F_\C u\big]\d s
=\int_{0}^t  u\times \d W(s)\,,
\end{align}
where \eqref{variational_identity_1} is to be understood in the Bochner sense in $H^{-2},$ while the integrals in \eqref{variational_identity_2} 
are taken in the Bochner sense (respectively It\^o sense), in $H^{-1}.$
\end{enumerate}

Furthermore, let  $\tau _1$ and $\tau _2$ be two stopping times with respect to $(\mathcal F_t)_{t\in[0,T]}$ such that $\P$-as $0\leq \tau _1<\tau _2\leq T.$
We will say that $(v,u)$ is a \emph{local solution} on $[\tau _1,\tau _2),$ provided \ref{sol_i}, \ref{sol_ii} and \ref{sol_iii} hold, but \ref{sol_iv} is replaced by the relations \eqref{variational_identity_1}-\eqref{variational_identity_2} for $t\in[\tau _1(\omega ),\tau _2(\omega )),$ for $\P$-almost every $\omega \in\Omega .$
\end{definition}

We start by giving a uniqueness result.
\begin{theorem}[Uniqueness]
\label{thm:uniqueness}
Fix a stochastic basis $(\mathfrak P, W)$ as in Definition \ref{def:mart_sol}.
Let $(v_j,u_j),$ $j=1,2,$ be solutions with respect to $(\mathfrak P,W),$ starting from the same initial datum $(v(0),u(0))\in \H^{-1}\times \LL^2.$ Assume that $\C\in\mathscr {L}_2(L^2,L^2)$ and suppose furthermore that for $j=1,2,$ it holds
\begin{equation}
\label{int_property}
v_j \in L^4(\Omega ; L^4(0,T;\L^4)),\quad 
u_j \in L^4(\Omega ; L^4(0,T;\WW^{1,4}))\,.
\end{equation} 

Then, we have $(v_1,u_1)=(v_2,u_2).$
\end{theorem}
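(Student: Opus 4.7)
The plan is to form the differences $V:=v_1-v_2$ and $U:=u_1-u_2$, both with zero initial data, and show that a suitable energy $E(t)=|V(t)|_X^2+|U(t)|_Y^2$ satisfies a stochastic Gronwall-type inequality which forces $E\equiv 0$. First I would subtract the variational identities \eqref{variational_identity_1}-\eqref{variational_identity_2}: since both solutions are driven by the same Wiener process $W$, the equation for $V$ is purely deterministic, while $U$ obeys an It\^o SPDE with the linear multiplicative noise $U\times\d W$. I localise via the stopping times
\[
\tau_N:=\inf\Bigl\{t\leq T\,:\,\int_0^t\!\phi(s)\,\d s\geq N\Bigr\}\wedge T,\qquad \phi:=1+\sum_{j=1}^{2}\bigl(|v_j|_{L^4}^4+|\nabla u_j|_{L^4}^4\bigr),
\]
which satisfy $\tau_N\uparrow T$ almost surely thanks to \eqref{int_property}.

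Applying It\^o's formula to $E$, two structural cancellations drastically simplify the noise contributions. First, $\int\langle U,U\times\d W\rangle_{L^2}$ vanishes since $U(x)\cdot(U(x)\times h)=0$ pointwise. Second, the It\^o trace $\sum_{l,j}|U\times\CC_l^j|_{L^2}^2$ is exactly compensated by the drift term $-2\langle U,F_\C U\rangle$ coming from the It\^o--Stratonovitch correction, via the identity $(a\times b)\times b=(a\cdot b)b-|b|^2a$ together with $\sum_{j=1}^3(U\cdot\ee_j)\ee_j=U$. It then remains to estimate the deterministic drift: the Navier--Stokes advection $V\cdot\nabla v_j+v_j\cdot\nabla V$, the Ericksen coupling $\mathscr P\,\div(\nabla u_j\odot\nabla U)$, the director transport $V\cdot\nabla u_1$, and the quadratic sphere term $|\nabla u_1|^2u_1-|\nabla u_2|^2u_2$. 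Each is controlled by integration by parts (exploiting $\div V=0$), the 2D Ladyzhenskaya interpolation $|f|_{L^4}^2\leq C(|f|_{L^2}|\nabla f|_{L^2}+|f|_{L^2}^2)$, and Young's inequality, yielding bounds of the form $\epsilon(\text{parabolic good terms})+g(t)E(t)$ with $g\in L^1(0,T)$ dominated by $\phi$ and its powers. Summing then gives
\[
E(t\wedge\tau_N)\leq\int_0^{t\wedge\tau_N}g(s)E(s)\,\d s+M_{t\wedge\tau_N},
\]
for some local martingale $M$ with $M_0=0$; a stochastic Gronwall argument (taking expectation after a further truncation) forces $E\equiv 0$ on $[0,\tau_N]$, and sending $N\to\infty$ concludes.

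The main obstacle is the genuinely critical Ericksen coupling $\langle\div(\nabla u_j\odot\nabla U),V\rangle$: upon integration by parts it reads $-\langle\nabla u_j\odot\nabla U,\nabla V\rangle$, and a naive estimate would require an $L^4$-bound on $\nabla U$ or $\nabla V$, i.e.\ second-order gains unavailable at the energy level. My intended remedy is to work with the energy $E=|V|_{\H^{-1}}^2+|U|_{\LL^2}^2$, pairing the velocity equation with the dual test function $A^{-1}V\in\H^1$; the coupling then becomes $\langle\nabla u_j\odot\nabla U,\nabla A^{-1}V\rangle$, and the 2D Sobolev embedding $H^{1/2}\hookrightarrow L^4$ together with the interpolation $|V|_{H^{-1/2}}^2\leq|V|_{\H^{-1}}|V|_{L^2}$ produces the usable bound $\epsilon(|\nabla U|_{L^2}^2+|V|_{L^2}^2)+C|\nabla u_j|_{L^4}^4|V|_{\H^{-1}}^2$. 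This low-regularity choice of energy is moreover forced by the weak noise assumption $\C\in\mathscr L_2(L^2,L^2)$, which precludes including any spatial derivatives of $U$ in $E$ (the It\^o trace of $|\nabla U|_{L^2}^2$ would demand $\C\in\mathscr L_2(L^2,H^1)$). The quadratic sphere term is handled via the algebraic identity $|\nabla u_1|^2-|\nabla u_2|^2=\nabla(u_1+u_2):\nabla U$, which exhibits a factor of $\nabla U$ absorbable into the parabolic good term $|\nabla U|_{L^2}^2$.
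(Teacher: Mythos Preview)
Your proposal is correct and matches the paper's proof almost exactly: both work with the energy $|V|_{\H^{-1}}^2+|U|_{\LL^2}^2$, both resolve the critical Ericksen coupling by testing the velocity equation against $(I+A)^{-1}V$ and invoking the 2D interpolation $|g|_{W^{-1,4}}^2\leq C|g|_{H^{-1}}|g|_{L^2}$ (your $H^{1/2}\hookrightarrow L^4$ formulation is equivalent), and both handle the sphere nonlinearity via $|\nabla u_1|^2-|\nabla u_2|^2=\nabla(u_1+u_2):\nabla U$. The only difference is that the paper, having observed (as you do) that the martingale part $\langle U,U\times\d W\rangle$ vanishes and that the It\^o trace cancels the $F_\C$-drift, dispenses with stopping times and stochastic Gronwall entirely and simply applies the deterministic Gronwall lemma pathwise; your $\tau_N$-localisation and ``stochastic Gronwall'' are therefore harmless but superfluous once you have already argued $M\equiv 0$.
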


Before we state our second result, we need another (stronger) notion of solution.
\begin{definition}[strong solution]
\label{def:strong_sol}
Fix a stochastic basis $(\mathfrak P,W),$ $\mathfrak P=(\Omega ,\mathcal A,(\mathcal F_t)_{t\in[0,T]}),$
and let $\tau _1$ and $\tau _2$ be two stopping times with respect to $(\mathcal F_t)_{t\in[0,T]}$ such that  $0\leq \tau _1<\tau _2\leq T,$ $\P$-a.s.
We say that $(v,u)$ is a \emph{strong solution} of \eqref{SEL} on $[\tau _1,\tau _2)$ if the following holds
\begin{enumerate}[label=(\roman*)]
 \item $(v,u)$ is a local solution on $[\tau _1,\tau _2)$ with respect to $(\mathfrak P,W),$ in the sense of Definition \ref{def:mart_sol};
 \item it has the additional regularity
 \[
 v\in L^0(\Omega ;\V^{0}_{\mathrm{loc};[\tau _1,\tau _2)}),\quad \quad 
 u\in L^0(\Omega;\U^{1}_{\mathrm{loc};[\tau _1,\tau _2)})\,.\]
\end{enumerate}
\end{definition}

We are interested in a special class of local strong solutions, which is uniquely caracterized by the ``forward bubbling'' property. In \cite{H1}, these were referred to as``Struwe solutions'', in analogy with the deterministic theory in \cite{St1}.

\begin{theorem}[Existence and uniqueness of strong solutions]
\label{thm:main}
Let $(v_0, u_0) \in \L^2 \times \HH^{1},$ $\C\in \hs{1},$ and assume that $|u_0|_{\R^3}=1$ almost everywhere.
Fix a stochastic basis $(\mathfrak P,W),$ where $\mathfrak P=(\Omega ,\mathcal A,\P)$ and $(W,\C)$ satisfies Assumption \ref{ass:W}.

There exists a solution $(v,u)$ of \eqref{SEL}, starting from the initial datum $(v_0,u_0),$ and a random variable $J\in L^0(\Omega ;\N)$
such that the following properties hold:
\begin{enumerate}[label=(P\arabic*)]
\item\label{prop:1}
There is a sequence of stopping times $0\equiv\tau _0<\tau _1<\dots <\tau_J\equiv T $ with the property that for each $j\in\{0,\dots,J-1\}$,
\[
(v,u)|_{[\tau _j,\tau _{j+1})}\enskip  \text{is supported in}\enskip  \V_{\loc,[\tau _j,\tau _{j+1})}^{0}\times\U^{1}_{\loc,[\tau _j,\tau _{j+1})},
\]
and has moments of all order in this space.
In particular the solution is \emph{strong} on each subinterval $[\tau _j,\tau _{j+1})$ for $j\in\{0,\dots,J-1\}.$

\item\label{prop:2} 
For each $j\in\{0,\dots,J-1\}$, the random variable $(v(\tau _{j+1}),u(\tau _{j+1}))$ belongs to the space $L^2(\Omega ,\L^2\times \HH^1).$ 
Moreover, for every sequence of stopping times $\{\sigma _j^k,k\in\mathbb{N}\}$ such that $\P(\sigma ^k_j\nearrow \tau _{j+1})=1,$
we have
\[
(v(\sigma ^k_j),u(\sigma ^k_j))
\underset{k\to\infty}{\longrightarrow}
(v(\tau _{j+1}),u(\tau _{j+1}))\enskip \text{weakly in}\enskip L^2(\Omega ;\L^2\times \HH^1)\,.
\]

\item\label{prop:3} 
The solution $(v,u)$ may become singular at $t=\tau _{j+1}-0,$ for each $j\in\{0,\dots,J-1\}.$ Namely, we have the alternative that either
$\P(J=1)=1,$ or $\,\P(J>1)>0$ in which case the solution ``bubbles forward'' in the following sense:
\begin{multline}
\inf_{\rho >0}\sup_{t\nearrow \tau _{j+1}}\sup_{x\in \O}\int_{B(x,\rho )}(|v(t,y)|^2+|\nabla u(t,y)|^2)\d y>0\,,
\\
\P\text{-a.s.\ on}\enskip \{\tau _{j+1}<T\}.
\end{multline}

\item\label{prop:4}
The above solution is unique in the class described by \ref{prop:1},\ref{prop:2} and \ref{prop:3}.
\end{enumerate}
\end{theorem}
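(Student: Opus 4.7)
The strategy is a stochastic adaptation of Struwe's construction \cite{St1} for the harmonic map heat flow, combined with the approximation scheme and compactness machinery developed in Sections~\ref{sec:solvability}--\ref{sec:convergence}. I would first regularize the initial datum: choose a sequence $(v_0^n,u_0^n)\in\LL^p\times\WW^{1,p}$ with $p>2$, $|u_0^n|_{\R^3}=1$ almost everywhere, and $(v_0^n,u_0^n)\to (v_0,u_0)$ in $\L^2\times\HH^1$ with uniformly bounded energy. For each $n$, the local solvability theorem for subcritical data from Section~\ref{sec:solvability} produces a local strong solution $(v^n,u^n)$ on some maximal random interval.

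Next, the a priori estimates of Section~\ref{sec:apriori} yield uniform-in-$n$ bounds for $(v^n,u^n)$ in the critical space $\L^2\times\HH^1$, and the bootstrap principle of Section~\ref{sec:bootstrap} ensures that strong regularity $\V^0\times\U^1$ propagates as long as the local energy remains below the geometric threshold $\varepsilon_1$ of \eqref{el-1}. Applying the Gy\"ongy-Krylov compactness argument (Section~\ref{sec:convergence}), justified by the pathwise uniqueness of Theorem~\ref{thm:uniqueness}, produces a limit $(v,u)$ on the original stochastic basis $(\mathfrak P,W)$ that is a strong solution on $[0,\tau_1)$, where
\[
\tau_1 := \inf\Bigl\{t>0 : \forall \rho>0,\ \liminf_{s\nearrow t}\sup_{x\in\O}\int_{B_\rho(x)}\bigl(|v(s,y)|^2+|\nabla u(s,y)|^2\bigr)\d y \geq \varepsilon_1\Bigr\}\wedge T\,.
\]
Weak continuity at $\tau_1$ in $L^2(\Omega;\L^2\times\HH^1)$ (property~\ref{prop:2}) follows from the uniform energy estimate and Banach-Alaoglu, and the sphere constraint $|u(\tau_1)|_{\R^3}=1$ survives the weak limit because it is preserved pointwise almost everywhere.

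I would then iterate, taking the $\mathcal F_{\tau_1}$-measurable random initial datum $(v(\tau_1),u(\tau_1))$ and restarting the construction on $[\tau_1,T]$; this requires a version of the local solvability adapted to random initial data, which is routine given measurability. To prove $J<\infty$ almost surely, I would use that at each bubbling time the total energy drops by at least $\varepsilon_1/2$, so $J$ is bounded by $1+2 E(v_0,u_0)/\varepsilon_1$ almost surely, where $E$ denotes the initial free energy. Uniqueness in the class \ref{prop:1}--\ref{prop:3} (claim~\ref{prop:4}) then follows from Theorem~\ref{thm:uniqueness}: on each interval $[\tau_j,\tau_{j+1})$ the strong regularity $\V^0\times\U^1$ combined with the two-dimensional Ladyzhenskaya inequality \eqref{ineq:interp} yields \eqref{int_property}. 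The main obstacle will be the passage to the limit at each bubbling time: identifying the weak $L^2(\Omega;\L^2\times\HH^1)$ limit as a legitimate new initial datum (which requires pointwise almost everywhere convergence to preserve $|u|_{\R^3}=1$, and strong convergence in the equation away from the concentration set) and ensuring that no energy is spuriously dissipated into that set---this is precisely the role of the $\varepsilon_1$-regularity threshold together with the bootstrap of Section~\ref{sec:bootstrap}.
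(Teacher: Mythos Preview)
Your overall strategy matches the paper's: regularize, apply the local solvability and bootstrap of Sections~\ref{sec:solvability}--\ref{sec:bootstrap}, pass to the limit via Gy\"ongy--Krylov using Theorem~\ref{thm:uniqueness}, and iterate from the weak limit at each singular time. However, your argument for $J<\infty$ a.s.\ is the deterministic Struwe argument and does not apply here. The energy identity \eqref{estim:energy} contains the martingale $X(t)=\int_0^t\langle u\times\d\nabla W,\nabla u\rangle$ and the positive drift $t|\nabla\C|_{\mathscr L_2(L^2,L^2)}^2$, so $E$ is not monotone on the regular intervals, and your claimed bound $J\le 1+2E(v_0,u_0)/\varepsilon_1$ is false in general: after a bubble the noise can replenish the energy before the next singular time. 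The correct accounting (which the paper defers to \cite{H1}) telescopes the $\varepsilon_1$-drops against the full identity on each $[\tau_j,\tau_{j+1})$ and yields a bound of the form
\[
J\varepsilon_1 \;\le\; E(0) + T|\nabla\C|_{\mathscr L_2(L^2,L^2)}^2 + \sup_{t\in[0,T]}|X(t)|\,,
\]
which is a.s.\ finite but random---consistent with $J\in L^0(\Omega;\N)$ in the statement.

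A second, smaller gap: the paper regularizes not only the initial data but also the noise, replacing $W$ by the finite-rank $W_n=\sum_{l\le n}\mathbf B_l\,\C f_l$ so that the approximate covariance lies in $\hs{s}$ for every $s$ (see \eqref{nota:Wn}--\eqref{approx}). This is essential because Theorem~\ref{thm:bootstrap} requires $\C\in\hs{3}$, whereas Theorem~\ref{thm:main} assumes only $\C\in\hs{1}$; regularizing $(v_0,u_0)$ alone to $\L^p\times\WW^{1,p}$ does not make the approximate problem smooth enough to run the bootstrap.
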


\subsection{Strategy of the proof of Theorem \ref{thm:main}}
\label{sec:strategy}

The existence part generalizes arguments for the deterministic case from \cite{LLW1,Ho1,LW2} to the SPDE  (\ref{SEL}), and benefits from \cite{H1}. As it turns out, solutions are arbitrary regular in the space-like variable (as permitted by the data), as could be easily seen by a higher order generalization of Theorem \ref{thm:bootstrap}. As this fact does not play any specific role in the proof of Theorem \ref{thm:main}, we will restrict ourselves to show that, under the assumptions that $(v(0),u(0))\in \H^2\times\HH^3$ and $\C\in\hs{3},$ the full trajectory takes values in $\H^2\times\HH^3,$ up to $t=\zeta (\rho ),$ where $\zeta (\rho )$ denotes the concentration time defined in \eqref{nota:zeta} for any fixed $\rho >0.$ This property will be obtained as a consequence of the bootstrap argument shown in Section \ref{sec:bootstrap}.

Before proving Theorem \ref{thm:main}, we will first need local solvability results in the ``subcritical case'' i.e.\ when
\[
(v_0,u_0)\in \L^p\times \WW^{1,p},\quad \text{and}\quad \C\in\gamma (W^{\alpha ,p})\enskip \text{with}\enskip \alpha >2/p,
\]
for some $p>2.$
For $\C\in \hs{\alpha} ,$ $\alpha $ being sufficiently large, $(v_0,u_0) \in \W^{s-1,p}\times\WW^{s,p}$ with $s\in [1,3]$ and $p>2,$ we show that the problem \eqref{SEL} is locally well-posed, i.e.\ up to some stopping time $\tau ^{s,p}$, and that the trajectories of $(v,u)$ belong to $C([0,\tau ^{s,p});\W^{s-1,p}\times\WW^{s,p}).$
The proof of these facts will be done in Section \ref{sec:solvability} via a fixed point argument and for an appropriate mild formulation for \eqref{SEL}.

The proof of Theorem \ref{thm:main} will be addressed progressively in sections \ref{sec:apriori}, \ref{sec:bootstrap} and \ref{sec:convergence}.
These steps are summarized below.

~\paragraph{\itshape Step 1: A priori estimates.}
In a first step, we shall collect a priori estimates on the ``energy'' $E(t):=\frac12(|v(t)|_{L^2}^2+ |\nabla u(t)|^2_{L^2})$
associated to a strong solution $(v,u).$
To prove this, we use the fact that the quantity
\begin{equation}
\label{conserved_flow}
\frac{1}{2}\left(|v(t)|_{L^2}^2+|\nabla u(t) |^2_{L^2}\right) +\int_0^t(|\nabla v(s)|_{L^2}^2+ |\Delta u+u|\nabla u|^2|_{L^2}^2(s))\d s\,,
\end{equation} 
modulo correction by a suitable semi-martingale term, is preserved along the flow.
In particular, the energy estimate comes with an $L^2_t(L^2_x)$ estimate on the ``tension'' 
\begin{equation}
\label{tension_intro}
\tt:=\Delta u+u|\nabla u|^2\,.
\end{equation} 
We will also derive ``local estimates'' for this energy, where the locality is to be understood in the spatial sense, i.e.\ on small balls $B(x,\rho )$ for $x\in\O$ and $\rho >0.$ These estimate will also prove useful in  Section \ref{sec:positivity}, when we will show that the concentration time is non-trivial for a limit of approximate solutions, i.e.\ $\P(\zeta(\lim v_n,\lim u_n;\rho )= 0)=0.$

~\paragraph{\itshape Step 2: Bootstrap}
In a second step, we will prove a priori estimates that are local \emph{in time}, i.e.\ up to some energy-concentration time $\zeta(v,u;\rho )>0.$
Given a local solution $(v,u,\tau )$ and $\rho >0,$ the energy-concentration time $\zeta (v,u;\rho )$
is defined as the first time $t\in[0,\tau )$ such that there exists $x\in \O$ for which the local energy
\[
\varepsilon(t,\rho ,x):= \frac12\int_{B(x,\rho )}(|v(t,y)|^2+|\nabla u(t,y)|^2)\d y
\]
attains a threshold $\varepsilon_1>0$ (the exact value of $\varepsilon _1$ is related to the optimal constant in \eqref{struwe_intro}).
The core of the argument is then to show that, for this value of $\varepsilon_1$ and for every $t\leq \zeta (v,u;\rho ),$
the $L^2(0,T;\HH^2)$-norm of $u(\cdot \wedge \zeta)$ has moments of arbitrary order, estimated above by a constant that only depends on $E(0),|\C|_{\hs{1}}$ and $\rho$.
This fact will follow from the very definition of $\zeta (v,u;\rho ),$ 
and the following inequality due to Struwe \cite{St1}: if $u\in L^2(0,T;\HH^2),$ it holds
\begin{equation}
\label{struwe_intro}
\iint_{[0,T]\times\O}|\nabla u|^2\d y\d t
\leq \mu _1\left(\sup_{\substack{t\in [0,T]\\ x\in\O}}\int\limits_{y\in B(x,\rho )}|\nabla u(t,y)|^4\d y\right)
\iint\limits_{[0,T]\times\O}\Bigg(|\Delta u|^2+\frac{|\nabla u|^2}{\rho^2}\Bigg)\d y\d t
\end{equation} 
The $L^2_t(\HH^2_x)$-estimate then follows rather easily from \eqref{conserved_flow}, \eqref{struwe_intro} and the fact that, since $\tt\perp u$ (as can be seen from the norm constraint $|u|_{\R^3}=1$ a.e.), it holds 
\[
|\tt|_{\R^3}^2=\tt\cdot \Delta u = |\Delta u|_{\R^3}^2- |\nabla u|_{\R^{3\times 2}}^4\,.
\]

Together with the estimate on the energy $E,$ this will allow  us to obtain suitable bounds in the ``critical space'' $\V_T^0\times\U_T^1,$ locally in time.

~\paragraph{\itshape Step 3: Construction of the local strong solution by approximation}
We will approximate the equation \eqref{SEL} by considering the local solution $(v_n,u_n,\tau _n)$  to a problem with smooth data $(v_n(0),u_n(0);\C_n).$ The existence and uniqueness of such local solutions for each $n\in\N$ is ensured by the above local solvability results.
By the above steps, the image laws of the sequence $\{(v_n,u_n),n\in\N\}$ is weakly compact in the spaces $\V^{\delta -1}_{\loc}\times\U^\delta_{\loc} $ for any $\delta <1,$ and therefore there exists -- up to a change of probability space in the use of the Skorohod embedding theorem -- a limiting process $(\vs,\us)$ which will be shown to provide a local strong solution.
This solution is regular enough to satisfy the assumptions of the uniqueness theorem, and therefore, any jointly converging subsequence $((v_{n_\ell },u_{n_\ell}),(v_{m_\ell },u_{m_\ell}))$ should, in the limit, be supported on the diagonal of $\V^0_\loc\times\U^1_\loc.$ By Gyongy-Krylov Theorem, this will show that the solution is in fact probabilistically strong, up to the singular time $\zeta (v,u;\rho )=\lim_{n\to \infty}\zeta (v_n,u_n;\rho ).$
The local solution $(v,u;\tau )$ will be then constructed by letting $\tau _1:=\lim_{\rho \to0}\zeta (v,u;\rho ),$
and then by induction on each $[\tau _j,\tau _{j+1}),$ by taking as initial datum the weak limit of $(v,u)$ as $t\nearrow\tau _j$ (see the details in Section \ref{sec:end_proof}).

\section{Proof of uniqueness }
\label{sec:uniqueness}
\subsection{Main interpolation inequality}

The following well-known interpolation inequality will be crucial in the sequel:
there exists a constant $\mu _0>0$, such that for every $\phi \in H^1,$
\begin{equation}\label{ineq:interp}
\int_\O|\phi |^4\d x\leq \mu _0\left(\int_{\O}|\phi |^2\d x\right)\left(\int_\O\big[|\phi |^2+|\nabla \phi |^2\big]\d x\right)\,.
\end{equation}
For a proof, we refer e.g.\ to \cite[II Thm.\ 2.2]{ladyzhenskaya1968linear}.

\subsection{Proof of Theorem \ref{thm:uniqueness}}
Let $(v_1,u_1),$ $(v_2,u_2)$ be two solutions with respect to $(\mathfrak P,W),$ starting from the same initial datum.
Denote further by $g=v_1-v_2$, $f=u_1-u_2,$ and $U=u_1+u_2.$
We have the system
\[
\begin{cases}
 \d g +Ag\d t
 = \mathscr P\big[-g\cdot \nabla v_2 - v_1\cdot \nabla g - \div(\nabla f\odot \nabla u_2+\nabla u_1\odot\nabla f)\big]\d t
 \\[0.5em]
 \d f -\Delta f\d t 
 = \big[ f|\nabla u_1|^2 + u_2\nabla f\cdot \nabla U -g\cdot \nabla u_1 - v_2\cdot \nabla f\big]\d t + f\times\circ\d W
 \\[0.5em]
 g(0)=f(0)=0\,.
\end{cases}
\]
Our strategy is to apply the It\^o Formula \cite[Theorem 4.2.5]{prevot2007concise}, for a suitable ``Gelfand triple'' $V\subset H\subset V^*.$

By Definition \ref{def:mart_sol}, the stochastic processes 
\[
\begin{aligned}
Y
&:=\mathscr P\big[-g\cdot \nabla v_2- v_1\cdot \nabla g - \div(\nabla f\odot \nabla u_2+\nabla u_1\odot\nabla f)\big]
\\
\tilde Y
&:=f|\nabla u_1|^2 + u_2\nabla f\cdot \nabla U -g\cdot \nabla u_1 - v_2\cdot \nabla f
\\
Z
&:=u\times(\C (\cdot)) \,,
\end{aligned}
\]
are progressively measurable. Furthermore, using \eqref{moment_drift} we see that
$Z\in L^2(\Omega \times[0,T];\mathscr L_2(L^2,L^2)),$ and thus 
\[
(0,Z)\in L^2(\Omega \times[0,T];\mathscr L_2(U;\LL^2)),\quad \text{where}\enskip 
U=\{0\}\times\LL^2\,.
\]

Next, observe that the shifted operator
\begin{equation}
 \Lambda :=I+A,
\end{equation} 
defines a continuous isomorphism from $\H^{-1}$ to $\H^1$ (as is classical, see e.g.\ \cite{giga1981analyticity}) and introduce the spaces 
$H:=\H^{-1}\times\LL^2,$ $V:=\L^2\times \HH^1,$
where $H$ is endowed with the norm 
\begin{equation}
|(g,f)|_{H}:=|\Lambda ^{-1/2}g|^2_{L^{2}} + |f|^2_{L^2}\,.
\end{equation}
With this definition, the Riesz isomorphism $i:H\to H^*$ allows for the identification
$V^*\simeq\H^{-2}\times \HH^{-1}.$
As a consequence, using \eqref{int_property}, it is easily seen that
$(Y,\tilde Y)\in L^2(\Omega \times[0,T];V^*)$ and therefore we have all in hand to apply
\cite[Theorem 4.2.5]{prevot2007concise}. This yields the relation:
\[
\begin{aligned}
\frac12(|\Lambda ^{-1/2}&
g(t)|_{L^2}^2+|f(t)|_{L^2}^2 )+ \int_0^t(|g|_{L^2}^2+|\nabla f|^2_{L^2})\d s 
\\
&=\iint\nolimits_{[0,t]\times\O}\Big[ (-g\cdot \nabla v_2)\cdot \Lambda ^{-1}g - (v_1\cdot \nabla g)\cdot \Lambda ^{-1}g + (\nabla f\odot\nabla u_2)\cdot \nabla \Lambda ^{-1}g 
\\
&\quad \quad \quad \quad \quad 
\quad 
+ (\nabla u_1\odot\nabla f)\cdot \nabla \Lambda ^{-1}g +|f|^2|\nabla u_1|^2 
\\
&\quad \quad \quad 
+ u_2\cdot f(\nabla f\cdot \nabla U) -f\cdot (g\cdot \nabla u_1) - f \cdot (v_2\cdot \nabla f)
\Big]\d x\d s
\\
&=\sum\nolimits_{\gamma =1}^8\T_\gamma\,,
\end{aligned}
\]
and we now evaluate each term separately.
Concerning the first term, we have using that $\div g=0$
\begin{equation}
\label{pre_I_1}
\begin{aligned}
\T_1
&=
\iint_{[0,t]\times\O} \big(\Lambda ^{-1/2}\partial _i(g^iv_2)\big)(\Lambda ^{-1/2}g)\d x\d s
\\
&\leq C\int_0^t|\partial _i(g^iv_2)|_{W^{-1,4/3}}|g|_{W^{-1,4}}\d s
\\
&\leq C \int_0^t|g|_{L^2}|v_2|_{L^{4}}|g|_{W^{-1,4}}\d s\,.
\end{aligned}
\end{equation} 
Now, observe that an immediate generalization of \eqref{ineq:interp} yields the existence of $\widetilde \mu_0>0$ such that for any $\phi \in W^{-1,4}:$
\begin{equation}
\label{immediate_gen}
|\phi |_{W^{-1,4}}^2\leq \widetilde \mu_0|\phi |_{H^{-1}}|\phi |_{L^2}\,.
\end{equation} 
Whence, making use of Young inequality in \eqref{pre_I_1} together with \eqref{immediate_gen}, one obtains
\[
\begin{aligned}
\T_1&
\leq \epsilon  \int_0^t|g|^2_{L^2}\d s + C(\epsilon ,\widetilde \mu_0)\int_0^t|v_2|_{L^4}^2|g|_{H^{-1}}|g|_{L^2}\d s
\\
&\leq 2\epsilon \int_0^t|g|^2_{L^2}\d s + \tilde C(\epsilon ,\widetilde \mu_0)\int_0^t|v_2|_{L^4}^4|g|_{H^{-1}}^2\d s\,,
\end{aligned}
\]
for any $\epsilon >0.$
Similar computations give for the second term
\[
\begin{aligned}
\T_2
&=-\iint_{[0,t]\times\O}\big(\Lambda ^{-1/2}\partial _i(v_1^ig)\big)(\Lambda ^{-1/2}g)\d x\d s
\\
&\leq C\int_0^t|v_1|_{L^4}|g|_{L^2}|g|_{W^{-1,4}}\d s
\\
&\leq 
\epsilon \int_0^t|g|^2_{L^2}\d s + C(\epsilon ,\widetilde\mu _0)\int_0^t|v_1|_{L^4}^2|g|_{H^{-1}}|g|_{L^2}\d s\,.
\\
&\leq
2\epsilon \int_0^t|g|^2_{L^2}\d s + \tilde C(\epsilon ,\widetilde\mu _0)\int_0^t|v_1|_{L^4}^4|g|_{H^{-1}}^2\d s\,,
\end{aligned}
\]
$\epsilon >0$ being arbitrary.
For the third term, we have using \eqref{immediate_gen}:
\[
\begin{aligned}
\T_3
&\leq 
\int_0^t |\nabla f|_{L^2}|\nabla u_2|_{L^4}|\nabla \Lambda ^{-1}g|_{L^4}\d s
\\
&\leq \epsilon \int_0^t|\nabla f|^2_{L^2}\d s+C(\epsilon )\int_0^t|\nabla u_2|^2_{L^4}|g|_{W^{-1,4}}^2\d s
\\
&\leq \epsilon \int_0^t|\nabla f|^2_{L^2}\d s+C(\epsilon ,\widetilde\mu _0)\int_0^t|\nabla u_2|^2_{L^4}|g|_{H^{-1}}|g|_{L^2}\d s
\\
&\leq \epsilon \int_0^t(|\nabla f|^2_{L^2}+|g|^2 )\d s+\tilde C(\epsilon ,\widetilde\mu _0)\int_0^t|\nabla u_2|^4_{L^4}|g|_{H^{-1}}^2\d s\,,
\end{aligned}
\]
for any $\epsilon >0,$
and the same computations as above yield for the fourth term:
\[
\begin{aligned}
\T_4
&\leq C\int_0^t|\nabla u_1|_{L^4}|\nabla f|_{L^2}|g|_{W^{-1,4}}\d s
\\
&\leq \epsilon \int_0^t|\nabla f|^2_{L^2}\d s + C(\epsilon ,\widetilde\mu_0)\int_0^t|\nabla u_1|_{L^4}^2|g|_{H^{-1}}|g|_{L^2}\d s
\\
&\leq  \epsilon \int_0^t(|\nabla f|^2_{L^2}+|g|^2 )\d s+C(\epsilon ,\widetilde\mu _0)\int_0^t|\nabla u_1|^4_{L^4}|g|_{H^{-1}}^2\d s\,.
\end{aligned}
\]

Next, using this time \eqref{ineq:interp}, we have for any $\epsilon >0:$
\[
\begin{aligned}
\T_5
&\leq \int_0^t|f|_{L^4}^2|\nabla u_1|_{L^4}^2\d s
\\
&\leq \mu _0 \int_0^t|f|_{L^2}(|f|_{L^2}+ |\nabla f|_{L^2})|\nabla u_1|^2_{L^4}\d s
\\
&\leq \epsilon \int_0^t|\nabla f|_{L^2}^2\d s+ C(\epsilon ,\mu _0)\int_0^t(|\nabla u_1|^2_{L^4}+|\nabla u_1|^4_{L^4})|f|_{L^2}^2\d s
\\
&\leq \epsilon \int_0^t|\nabla f|_{L^2}^2\d s+ \tilde C(\epsilon ,\mu _0)\int_0^t(1+|\nabla u_1|^4_{L^4})|f|_{L^2}^2\d s
\end{aligned}
\]
Similarly, using $|u_2|_{\R^3}=1$ and \eqref{ineq:interp}, we have
\[
\begin{aligned}
\T_6
&\leq\int_0^t|f|_{L^4}|\nabla f|_{L^2}|\nabla U|_{L^4}\d s
\\
&\leq \epsilon \int_0^t|\nabla f|^2_{L^2}\d s + C(\epsilon )\int_0^t|f|^2_{L^4}|\nabla U|_{L^4}^2
\\
&\leq \epsilon \int_0^t|\nabla f|^2_{L^2}\d s + C(\epsilon ,\mu _0)\int_0^t|f|_{L^2}(|f|_{L^2}+ |\nabla f|_{L^2})|\nabla U|_{L^4}^2
\\
&\leq 2\epsilon \int_0^t|\nabla f|^2_{L^2}\d s + \tilde C(\epsilon ,\mu _0)\int_0^t(1+|\nabla U|_{L^4}^4)|f|^2_{L^2}\d s
\end{aligned}
\]
while
\[
\begin{aligned}
\T_7
&\leq \int_0^t|f|_{L^4}|g|_{L^2}|\nabla u_1|_{L^4}\d s
\\
&\leq \epsilon \int_0^t|g|^2_{L^2}\d s + C(\epsilon ,\mu_0)\int_0^t|f|_{L^2}(|f|_{L^2}+ |\nabla f|_{L^2})|\nabla u_1|^2_{L^4}\d s
\\
&\leq \epsilon \int_0^t(|g|^2_{L^2}+|\nabla f|_{L^2}^2)\d s + \tilde C(\epsilon ,\mu_0)\int_0^t(1+|\nabla u_1|^4_{L^4})|f|_{L^2}^2\d s
\end{aligned}
\]

Using again \eqref{ineq:interp} with H\"older and Young Inequalities, we have finally
\[
\begin{aligned}
\T_8
&\leq \int_0^t|f|_{L^4}|v_2|_{L^4}|\nabla f|_{L^2}\d s
\\
&\leq \epsilon \int_0^t|\nabla f|^2_{L^2}\d s
+C(\epsilon )\mu _0\int_0^t|f|_{L^2}(|f|_{L^2} +|\nabla f|_{L^2})|v_2|^2_{L^4}\d s
\\
&\leq 2\epsilon \int_0^t|\nabla f|^2_{L^2}\d s
+C(\epsilon ,\mu _0)\int_0^t(1 +|v_2|^4_{L^4})|f|_{L^2}^2\d s\,.
\end{aligned}
\]

\textit{Conclusion.}
Since $|\Lambda ^{-1/2}g|_{L^2}$ and $|g|_{H^{-1}}$ are equivalent quantities, we see that provided $\epsilon >0$ is chosen sufficiently small, the summation of all the above contributions leads to the relation
\[
\Psi(t)\leq C\int_0^t\sum_{j=1,2}(1+|v_j(s)|^4_{L^4}+|\nabla u_j(s)|_{L^4}^4)\Psi(s)\d s
\]
where 
$\Psi(t):=\sup_{s\in[0,t]}\frac12(|\Lambda ^{-1/2}g(s)|^2_{L^2} +|f(s)|^2_{H^1})$ and $C>0$ is a universal constant.
Applying Gronwall Lemma for $\P$-a.e.\ $\omega\in\Omega ,$ we find that $f=g=0.$ This ends the proof of Theorem \ref{thm:uniqueness}.
\hfill\qed

\section{Local solvability} 
\label{sec:solvability}
This section is devoted to the proof of existence and uniqueness of local solutions.
In order to express this as a fixed point problem, it will be convenient to switch to a mild form for \eqref{SEL}. As seen for instance in \cite[Section 6]{DPZ}, mild solutions that are sufficiently regular (in a sense made precise below) are also strong solutions in the usual sense.

We shall say that a triplet $(v,u;\tau )$ is a local mild solution to \eqref{SEL} if $\tau >0$ denotes a stopping time, such that for some $p>2,$ the following holds:
\begin{enumerate}[label=(M\arabic*)]
 \item \label{cond:I}
  for any stopping time $0<\zeta <\tau $ the couple
 $(v(\cdot \wedge\zeta ),u(\cdot \wedge\zeta ))$ is progressively measurable as a process with values in $\L^p\times\WW^{1,p}$;
 \item \label{cond:II}
$\P$-almost surely on $\{t<\tau \}$:
\begin{align}
\label{mild_v}
v(t)&=e^{-tA}v_0+\int_0^te^{A(s-t)}\mathscr P(-v\cdot \nabla v-\div(\nabla u\odot\nabla u))\d s
\\
\label{mild_u}
u(t)&=e^{t\Delta }u_0+\int_0^te^{\Delta (t-s)}(u|\nabla u|^2-v\cdot \nabla u-F_\C u)\d s+\int_0^t e^{\Delta (t-s)}\left[u\times \d  W\right],
\end{align}
where the above correspond to Bochner integral in $\L^p,$ respectively in $\WW^{1,p},$ and It\^o integral in $\W^{1,p}.$
\end{enumerate}
As will be seen below, the condition \ref{cond:I} ensures the summability of the integrals in \ref{cond:II}.

Our main result in this section is the following.

\begin{theorem}
\label{thm:localSolv}
Let $p\in(2,\infty)$, $s\in[1,3],$ fix 
$\alpha >2/p,$ and $q>2/\alpha .$
For every $\C\in \gamma (W^{s+\alpha ,p})$ and $(v_0,u_0)\in \W^{s-1,p}\times \WW^{s,p}$ with $|u_0|_{\R^3}=1$ almost everywhere, there exists a local mild solution $(v,u;\tau^{s,p} )$ to \eqref{SEL}, unique in the space
\[
L^q\left(\Omega;C\big([0,\tau^{s,p});\W^{s-1,p} \big)\right)\times L^q\left(\Omega ;C\big([0,\tau ^{s,p});\WW^{s,p}\big)\right)\,.
\]
The existence time $\tau ^{s,p}>0$ is maximal in the sense that
\[
\text{on}\enskip \{\tau ^{s,p}<T\}\text{:}\quad 
\limsup_{t\to\tau ^{s,p}}\max\left(|v(t)|_{W^{s-1,p}},|u(t)|_{W^{s,p}}\right)=\infty.
\]

Moreover, if $p>2$ and $s\geq 2,$ then $u$ satisfies the spherical constraint \eqref{sphere_constraint} for a.e.\ $(\omega ,t,x)\in\Omega \times[0,T]\times \O$ such that $0\leq t< \tau^{p,s}(\omega ).$
\end{theorem}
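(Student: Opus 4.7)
The plan is to set up the mild system \eqref{mild_v}--\eqref{mild_u} as a fixed-point problem on the Banach space
\[
\mathcal X_T:=L^q(\Omega;C([0,T];\W^{s-1,p}))\times L^q(\Omega;C([0,T];\WW^{s,p})),
\]
after truncating the nonlinearities. Pick a smooth cutoff $\chi_R\in C^\infty_c([0,\infty))$ with $\chi_R\equiv 1$ on $[0,R]$, and multiply every occurrence of $v,u$ in the drifts and in the diffusion coefficient by $\chi_R\bigl(|v(t)|_{W^{s-1,p}}+|u(t)|_{W^{s,p}}\bigr)$. The resulting map $\Phi_R:\mathcal X_T\to\mathcal X_T$ is well-defined as soon as two families of estimates are available.

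First, using analyticity of $e^{-tA}$ on $\W^{s-1,p}$ and of $e^{t\Delta}$ on $\WW^{s,p}$, together with the Sobolev embedding $\WW^{s,p}\hookrightarrow L^\infty$ (valid since $s\geq 1>2/p$), the deterministic drift contributions are controlled by $T^{\beta}P\bigl(|(v,u)|_{\mathcal X_T}\bigr)$ for some $\beta>0$ and a polynomial $P$: the $t^{-1/2}$ smoothing of the Stokes semigroup absorbs the derivative loss in $v\cdot \nabla v$ and $\div(\nabla u\odot \nabla u)$, while the algebra/multiplier properties of $W^{s,p}$ control the terms $u|\nabla u|^2$, $v\cdot \nabla u$ and $F_\C u$ entering the $u$ equation. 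Second, the crucial stochastic estimate reads, for some $\delta>0$,
\[
\E\sup_{t\leq T}\Big|\int_0^t e^{(t-s)\Delta}[u(s)\times \d W(s)]\Big|_{W^{s,p}}^q
\lesssim T^\delta|\C|^q_{\gamma(W^{s+\alpha,p})}\Bigl(1+\E\sup_{t\leq T}|u(t)|^q_{W^{s,p}}\Bigr).
\]
This is an instance of the stochastic maximal regularity estimate of van~Neerven--Veraar--Weis for analytic semigroups on UMD Banach spaces (which $W^{s,p}$ is, as $p\in(2,\infty)$), combined with the pointwise multiplier bound $|u\times \C|_{\gamma(W^{s,p})}\lesssim |u|_{W^{s,p}}|\C|_{\gamma(W^{s+\alpha,p})}$. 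The regularity gap $\alpha>2/p$ yields the multiplier bound via Sobolev embedding of $W^{\alpha,p}$ into $L^\infty$, while $q>2/\alpha$ is what produces the gain $T^\delta$.

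The two previous ingredients show that $\Phi_R$ is a strict contraction on $\mathcal X_{T_R}$ for some $T_R>0$ depending only on $R$, $|\C|_{\gamma(W^{s+\alpha,p})}$ and the initial data; Banach's fixed point theorem then produces a unique solution $(v^R,u^R)$ of the truncated system on $[0,T_R]$, and standard time-concatenation extends it to $[0,T]$. Defining
\[
\tau_R:=\inf\{t\in[0,T]:|v^R(t)|_{W^{s-1,p}}+|u^R(t)|_{W^{s,p}}\geq R\}\wedge T,
\]
the cutoff $\chi_R$ is inactive on $[0,\tau_R)$, so that $(v^R,u^R)$ solves the genuine mild system there. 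Uniqueness of the truncated problem makes the family $(v^R,u^R)_R$ consistent in $R$, and we set $\tau^{s,p}:=\sup_R\tau_R$; the blow-up alternative is then immediate from the definition of $\tau_R$. When $s\geq 2$ and $p>2$, the embedding $\WW^{s,p}\hookrightarrow C^1$ provides enough classical $x$-regularity for $u$ to apply a pointwise It\^o formula to $|u|^2-1$; using $u\cdot(u\times \CC_l^j)=0$ and the It\^o--Stratonovich correction \eqref{ito_strato} to cancel all quadratic variation contributions, one obtains $\d(|u|^2-1)=0$, and the constraint \eqref{sphere_constraint} follows from $|u_0|_{\R^3}=1$.

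The main obstacle is the stochastic convolution bound: it takes place in the non-Hilbertian space $W^{s,p}$ and therefore requires the full $\gamma$-radonifying / UMD maximal regularity machinery rather than a classical It\^o isometry. The thresholds $\alpha>2/p$ and $q>2/\alpha$ are fine-tuned precisely to produce the small factor $T^\delta$ while still realizing the cross product against $\d W$ as a genuine pointwise multiplier, and any weakening of these conditions would break the contraction.
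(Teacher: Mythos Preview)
Your overall strategy---truncated fixed point in $L^q(\Omega;C([0,T];\W^{s-1,p}\times\WW^{s,p}))$, semigroup smoothing for the drift, $\gamma$-radonifying stochastic convolution estimates for the noise, then removal of the cutoff via stopping times---is essentially the paper's approach. Two minor differences: the paper first runs the contraction at $s=1$ and then obtains the general case $s\in[1,3]$ by differentiating the fixed-point map (applying $\Delta$ and setting up a second contraction for $(\Delta v,\Delta u)$, interpolating in between), rather than working directly in $W^{s,p}$; and it cites Brze\'zniak's stochastic convolution estimates (Proposition~\ref{pro:parabolic_sto}) rather than van~Neerven--Veraar--Weis. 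Both routes are valid and lead to the same thresholds $\alpha>2/p$, $q>2/\alpha$.

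There is, however, a genuine gap in your treatment of the spherical constraint. Your claim that ``one obtains $\d(|u|^2-1)=0$'' is false. It is true that $u\cdot(u\times\d W)=0$ kills the martingale part and that the It\^o correction $\sum_{l,j}|u\times\CC_l^j|^2$ exactly cancels the contribution of the Stratonovich drift $F_\C u$; but the remaining drift terms $\Delta u+u|\nabla u|^2-v\cdot\nabla u$ do \emph{not} annihilate against $u$. A direct computation gives, for $\rho:=|u|^2-1$,
\[
\d\rho=\bigl[\Delta\rho-v\cdot\nabla\rho+2|\nabla u|^2\rho\bigr]\d t,
\]
which is a linear (random-coefficient) parabolic equation in $\rho$ with $\rho(0)=0$. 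To conclude $\rho\equiv0$ you need a Gronwall or uniqueness argument, and this is precisely where the hypothesis $s\ge 2$, $p>2$ (giving $\nabla u\in L^\infty$ via $\WW^{s,p}\hookrightarrow C^1$) enters: it makes the zero-order coefficient $2|\nabla u|^2$ integrable in time with values in $L^\infty$. The paper carries this out by applying It\^o's formula to $\tfrac12\int_\O(|u|^2-1)^2\d x$, integrating by parts to dispose of the transport term (using $\div v=0$), and closing with Gronwall against $\int_0^t|\nabla u|_{L^\infty}^2\d s$. Your sketch should be amended accordingly.
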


\begin{remark}
\label{rem:sphere}
In particular, by Theorem \ref{thm:localSolv} if $p>2$ and $s\geq 2,$ then $(v,u)$ takes values in the domain of the linear part, and thus $(v,u)$ is a strong solution in the usual analytic (and probabilistic) sense, see \cite[Chapter 6]{DPZ}.
It is also a solution in the sense of Definition \ref{def:mart_sol} (the properties \ref{sol_i}, \ref{sol_iii} and \ref{sol_iv} are immediate in this case).
\end{remark}

Prior to proving Theorem \ref{thm:localSolv}, we need some preparatory steps.

\subsection{Hypercontractivity bounds}
We will make use of the following well-known inequality. Let $k\in\mathbb{Z}$ and assume that $\Lambda :D(\Lambda )\subset H^k\to H^k,$ is a negative self-adjoint operator such that for any $1<p<\infty,$ the semigroup $e^{t\Lambda }$ extends canonically to a strongly continuous semigroup on $W^{k,p},1<p<\infty.$
Suppose in addition that for some constant $M>0,$ the following estimate holds:
\begin{equation}
\label{resolvent}
|(\Lambda -\lambda )^{-1}|_{\mathscr L(H^k;H^k)}\leq \frac{M}{\lambda },\quad \text{for}\enskip \lambda >0.
\end{equation}
Then, for any $t\in(0,1],$ $k-2\leq \alpha ,\beta \leq k+2$ and $1<p,q<\infty$ it holds the estimate
\begin{equation}
\label{hypercontractivity}
|e^{t\Lambda }|_{\mathscr L(W^{\alpha ,p},W^{\beta ,q})}\leq \frac{C}{t^{\frac{\beta -\alpha}{2} +\frac1p-\frac1q}},
\end{equation}
where the constant $C$ depends only on the largest spectral value of $\Lambda .$
Note that such semigroups are in some references referred to as ``hypercontractive'' or ``$L^p$-contractive'' -- see, e.g., \cite[Theorem X.55]{reed1975methods}.

\begin{proof}[Proof of \eqref{hypercontractivity}.]
The inequality \eqref{hypercontractivity} is well-known in principle, hence we only provide references.
The case $p=q=2$ is treated for instance in \cite[Theorem 5.2]{pazy1983semigroups}.
The general case follows by interpolation and duality. We refer the reader e.g. to the proof given in \cite[p.\ 25]{rothe1984global}.
\end{proof}

Note that $-A$ (resp.\ $\Delta $) with $k=1$ (resp.\ $k=2$) satisfies the above assumptions (the property \eqref{resolvent} for the Laplacian is standard, while for the stokes operator we refer to \cite{giga1981analyticity}).

\subsection{Stochastic parabolic estimates}
We now recall some well-known stochastic parabolic estimates: consider the solution, of the equation
\begin{equation}
\label{eq:parabolic_sto}
  \begin{cases}
   \d Z-\Delta Z\d t=\Psi(t)\d \xi
   \\
   Z(0)=0,
  \end{cases}
\end{equation} 
(It\^o sense)
where $\xi \equiv \sum_{k\in\N}\mathbf B_k(t)f_k$ is a cylindrical Wiener process 
and the unknown is a continuous process $Z:\Omega \times[0,T]\to H.$
Under suitable assumptions on $\Psi$ (see the proposition below) the solution of \eqref{eq:parabolic_sto} is written as the stochastic convolution process:
\begin{equation}
 \label{sto_conv}
Z(t):=\int_0^te^{(t-s)\Delta} \Psi(s)\d \xi (s),\quad t\in[0,T].
\end{equation} 
The following result is proven in \cite{brzezniak1997stochastic}, see also 
\cite{krylov1996theory}.

\begin{proposition}
\label{pro:parabolic_sto}
  Let $\alpha \geq0,p\in[2,\infty),r\geq1,$ and assume we are given a progressively
  measurable process $\Psi:\Omega\times[0,T]\to \mathscr L_2(L^2,L^2)$ such that $\Psi$ belongs to
  $L^r\big (\Omega;L^r(0,T;\gamma (W^{\alpha ,p}))\big ).$
The following holds:
\begin{enumerate}[label=(\roman*)]
 \item for each $p>2,$ every $\delta \in[0,1-2/r)$ and $\lambda \in[0,1-1/r-\delta /2),$
 the stochastic convolution \eqref{sto_conv} is well-defined and belongs to 
 $L^r(\Omega;C^\lambda(0,T;W^{\alpha +\delta ,p}) ).$
 Moreover, it holds
 \[
 \E\left[\|Z\|^r_{C^\lambda (0,T;W^{\alpha +\delta ,p})}\right]
 \leq C\E\left[\|\Psi\|^r_{L^r(0,T;\gamma (W^{\alpha ,p}))}\right].
 \]
 
 \item For each $p\geq2,\delta\in(0,1),$ $Z$ is well-defined and belongs to
 $L^r(\Omega;L^r(0,T;W^{\alpha +\delta,p })).$ It holds as well:
 \[
 \E\left[\|Z\|_{L^r(0,T;W^{\alpha +\delta,p })}^r\right]\leq C\E\left[\|\Psi\|^r_{L^r(0,T;\gamma (W^{\alpha ,p}))}\right].
 \]
\end{enumerate}
The above constants do not depend on $\Psi$ in the indicated classes. 
\end{proposition}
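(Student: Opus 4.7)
The plan is to establish both assertions by the \emph{factorization method} of Da Prato-Kwapień-Zabczyk, adapted to the $L^p$-setting via $\gamma$-radonifying operators in place of the Hilbert-Schmidt norm (the framework of Brzeźniak for stochastic integration in UMD spaces). The two key analytical inputs are the Burkholder-Davis-Gundy inequality for operator-valued stochastic integrals in UMD spaces, and the hypercontractive smoothing bound \eqref{hypercontractivity} for the heat semigroup.

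For part (ii), I would first use Fubini to reduce to a pointwise-in-time estimate of $\mathbb{E}\|Z(t)\|_{W^{\alpha+\delta,p}}^{r}$. By BDG in the UMD space $W^{\alpha+\delta,p}$,
\[
\mathbb{E}\,\|Z(t)\|_{W^{\alpha+\delta,p}}^{r}
\lesssim \mathbb{E}\biggl(\int_0^t \|e^{(t-s)\Delta}\Psi(s)\|_{\gamma(L^2,W^{\alpha+\delta,p})}^{2}\,\d s\biggr)^{r/2}.
\]
Applying \eqref{hypercontractivity} in the $\gamma$-radonifying norm gives $\|e^{\tau\Delta}\|_{\gamma(W^{\alpha,p})\to\gamma(W^{\alpha+\delta,p})}\lesssim \tau^{-\delta/2}$, an $L^1$-integrable kernel whenever $\delta<1$; Young's convolution inequality then closes the argument, with constants depending only on $T$ and $\delta$.

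For part (i), I would exploit the factorization identity
\[
Z(t)=\frac{\sin(\pi\theta)}{\pi}\int_0^t (t-s)^{\theta-1}e^{(t-s)\Delta} Y(s)\,\d s,\qquad
Y(s):=\int_0^s (s-r)^{-\theta}e^{(s-r)\Delta}\Psi(r)\,\d\xi(r),
\]
which is valid for any $\theta\in(1/r,1/2)$. Repeating the BDG + Young argument of (ii) applied to $Y$ — the extra singular weight $(s-r)^{-\theta}$ being square-integrable thanks to $\theta<1/2$ — yields $Y\in L^r(\Omega\times[0,T];W^{\alpha,p})$. The \emph{deterministic} operator $R_\theta: Y\mapsto Z$ can then be shown, using Hölder's inequality in time together with \eqref{hypercontractivity}, to map $L^r(0,T;W^{\alpha,p})$ boundedly into $C^\lambda(0,T;W^{\alpha+\delta,p})$ provided $\lambda+\delta/2 < \theta - 1/r$. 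Optimizing $\theta$ in its admissible range produces the stated exponent conditions.

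The main technical obstacle is verifying the BDG inequality in the precise form required for the UMD-setting, which relies on the equivalence between $\gamma$-radonifying norms and $L^p$-valued square functions — a non-trivial input from the theory of vector-valued stochastic integration (see Van Neerven-Veraar-Weis). Once this foundational input is in place, the remainder of the argument is a careful but essentially routine bookkeeping of the parameters $(\theta,\delta,\lambda)$ near the endpoints of the admissible range.
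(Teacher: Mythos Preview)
The paper does not actually prove this proposition; it simply cites \cite{brzezniak1997stochastic} (and \cite{krylov1996theory}) and moves on. Your sketch via the Da Prato--Kwapie\'n--Zabczyk factorization in UMD spaces, with BDG in $\gamma$-radonifying form and the smoothing estimate \eqref{hypercontractivity}, is precisely the argument carried out in the cited Brze\'zniak reference, so your approach is correct and coincides with the intended source.
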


We can now proceed to the proof of the main theorem.

\subsection{Proof of Theorem \ref{thm:localSolv}}
Let $p>2.$ We shall first let $s=1$,
and show that the conclusions of the above theorem hold in this particular case.
The proof is based on a contraction mapping principle for a truncated version of \eqref{SEL},
into the Banach space
\begin{equation}
\label{nota:X}
\X_{q,T}:= L^q\left(\Omega;C\big(0,T;\L^p\big)\right)\times L^q\left(\Omega ;C\big(0,T;\WW^{1,p}\big)\right)\,,
\end{equation}
where $T>0$ and $q\geq 2$ are parameters to be fixed later. It is endowed with the product norm $\interleave(X_1,X_2) \interleave_{\X_{q,T}}:= \interleave X_1 \interleave_{q,T,\L^p}+\interleave X_2 \interleave_{q,T,\WW^{1,p}}$ where for convenience, whenever $E$ is a Banach space and $X:\Omega \times[0,T]\to E$ is a stochastic process, we shall denote
\[
\interleave X\interleave_{q,T,E}:= \E\left[\sup_{t\in[0,T]}\big|X(t)\big|^q_{E}\right]^{1/q}.
\]

Because the noise term cannot be estimated pathwise, we define a cut-off function $\theta \in C_c^\infty((0,\infty),\R)$, such that
\begin{equation}\label{nota:theta}
\mathrm{Supp}\theta\subset(0,2),\quad
0\leq \theta \leq 1\quad 
\text{and}\quad \theta(x)=1\quad\text{for all}\enskip  0\leq x\leq1\,,
\end{equation}
and for $R>0$, $x\in\R^+$, we denote
\[
\theta_R(x)=\theta\Big(\frac{x}{R}\Big)\,.
\]
Next, we fix $X_0\equiv(v_0,u_0)\in \L^p\times\WW^{1,p}$ and
solve a problem where the non-linearity is truncated,
that is: given $R>0$ for any $(w,y)\in \X_{q,T},$ we define
the map 
\[
\Gamma_{X_0,R}:\X_{q,T}\to \X_{q,T}\,,\quad (w,y)\mapsto \Gamma_{X_0,R} (w,y):=(v,u)\,,
\]
where for every $0\leq t\leq T,$ a.s.:
\begin{equation}
\label{equation:v_u}
\begin{aligned}
v(t)
&=e^{-tA}v_0 -\int_0^te^{(s-t)A}\mathscr P\big[\Theta  _R\big(w\cdot \nabla w +\div(\nabla y\odot\nabla y)\big)\big]\d s
\\
u(t)
&=e^{t\Delta }u_0+
\int_0^te^{(t-s)\Delta }\big[\Theta _R\big(y|\nabla y|^2+w\cdot\nabla  y\big)+F_\C y\big]\d s
+\int_0^te^{(t-s)\Delta }\big(y\times \d  W\big)\,,
\end{aligned}
\end{equation}
and we make use of the following abbreviation
\[
\begin{aligned}
&\Theta _R(t):=\theta _R\big(\max(|w(t)|_{L^p},|y(t)|_{W^{1,p}})\big)\,.
\end{aligned}
\]
In the sequel, we shall consider $R>0$ as fixed,
and show that provided $T$ is sufficiently small,
\emph{depending only on $R$}, then:
\begin{enumerate}[label=(P\arabic*)]
 \item\label{point_1} for $q>2/\alpha $, $\Gamma_{X_0,R}$ maps $\mathfrak X_{q,T}$ into itself;
 \item\label{point_2} $\Gamma_{X_0,R}$ is a contraction in $\mathfrak X_{q,T}$.
\end{enumerate}
Then, Picard Theorem yields existence and uniqueness of a fixed point $(v_R,u_R),$ solution to \eqref{mild_v}-\eqref{mild_u} up to the stopping time $\tau _R:=\inf\{t\in[0,T],\enskip |w(t)|_{L^p}=R\enskip \text{or}\enskip |y(t)|_{W^{1,p}}=R\}.$

~\paragraph{\itshape Step 1: proof of \ref{point_1}}
For the velocity component, we have for every $t\leq T:$
\[
\begin{aligned}
|v(t)|_{L^p} 
&\leq C(p,T)|v_0|_{L^p}
+\Big|\int_0^t\Theta _R(s)e^{(s-t)A}\mathscr P[w(s)\cdot \nabla w(s)]\d s\Big|_{L^p}
\\
&\quad \quad \quad \quad \quad 
+\Big|\int_0^t\Theta _R(s)e^{(s-t)A}\mathscr P\div(\nabla u\odot\nabla u(s))\d s\Big|_{L^p}
\\
&
=:C(p,T)|v_0|_{L^p}+|\T_1|_{L^p}+|\T_2|_{L^p}\,.
\end{aligned}
\]
The first term is estimated as follows:
since $w$ is divergence-free, we have for each $i=1,2,$ the relation
$[w\cdot \nabla ]w^i \equiv \sum_{j=1}^{2}w^j\partial _jw^i = \sum_{j=1}^{2}\partial _j(w^j w^i).$
Using in addition \eqref{hypercontractivity}, we have for the first term
\[
\begin{aligned}
|\T_1|_{L^p}
&\leq \int_0^t\Theta _R(s)\frac{|\mathscr P\sum\nolimits_{j=1}^2\partial _j(w^j(s)w(s))|_{W^{-1,p/2}}}{(t-s)^{1/2+1/p}}\d s
\\
&\leq C(\mathscr P,p)T^{1/2-1/p} \sup_{s\in[0,t]}\big(\Theta _R(s)|w(s)|_{L^p}^2\big)
\\
&\leq C(\mathscr P,p)T^{1/2-1/p} R^2,
\end{aligned}
\]
by continuity of $\mathscr P:W^{-1,p}\to \W^{-1,p}$ since $p>1.$
Whence, for some universal constant $C>0,$ it holds:
\begin{equation}
\label{contrib:1}
 \interleave\T_1\interleave_{q,T,\L^p}\leq CT^{1/2-1/p} R^2\,.
\end{equation} 

Similarly, we have for the second term
\[
\begin{aligned}
|\T_2|_{L^p}
&
\leq \int_0^t\Theta _R(s)\frac{|\mathscr P\div(\nabla y(s)\odot\nabla y(s))|_{W^{-1,p/2}}}{(t-s)^{1/2+1/p}}\d s
\\
&\leq C(\mathscr P,p)T^{1/2-1/p}\sup_{s\in[0,t]}\big(\Theta _R(s)|\nabla y(s)\odot\nabla y(s)|_{L^{p/2}}\big)
\\
&\leq C(\mathscr P,p)T^{1/2-1/p} \sup_{s\in[0,t]}\big(\Theta _R(s)|\nabla y(s)|_{L^p}^2\big)
\\
&\leq C(\mathscr P,p)T^{1/2-1/p} R^2\,,
\end{aligned}
\]
and therefore:
\begin{equation}
\label{contrib:2}
\interleave\T_2\interleave_{q,T,\L^p}
\leq CT^{1/2-1/p}R^2\,.
\end{equation}

Next, for $t\in[0,T],$ almost surely, we write
\[
\begin{aligned}
|u(t)|_{W^{1,p}}
&\leq C_p|u_0|_{W^{1,p}}
+\Big|\int_0^t\Theta _R(s)e^{(t-s)\Delta }\big(y(s)|\nabla y(s)|^2\big)\d s\Big|_{W^{1,p}}
\\
&\quad \quad \quad \quad \quad 
+\Big|\int_0^t\Theta _R(s)e^{(t-s)\Delta }\big(w\cdot\nabla  y(s)\big)\d s\Big|_{W^{1,p}}
\\
&\quad \quad 
+\Big|\int_0^te^{(t-s)\Delta }\big(F_\C y(s)\big)\d s\Big|_{W^{1,p}}
+\Big|\int_0^te^{(t-s)\Delta }y(s)\times\d  W\Big|_{W^{1,p}}
\\
&=: C_p|u_0|_{W^{1,p}}+\sum\nolimits_{i=3}^6|\T_i|_{W^{1,p}}.
\end{aligned}
\]

Making use of \eqref{hypercontractivity} and the Sobolev inequality $|f|_{L^\infty}\leq C_p|f|_{W^{1,p}}$ for $p>2,$
the first term above is estimated as follows:
\[\begin{aligned}
|\T_3|_{W^{1,p}}
&\leq C\int_0^t\Theta _R(s)\frac{|y(s)|\nabla y(s)|^2|_{L^{p/2}}}{(t-s)^{1/2+1/p}}\d s
\\
&\leq CT^{1/2-1/p}\sup_{s\leq t}(\Theta _R(s)|y|_{L^\infty}|\nabla y|_{L^p}^2)
\\
&\leq CT^{1/2-1/p}\sup_{0\leq s\leq T}\big(\Theta _R(s)|y(s)|_{W^{1,p}}^3\big).
\end{aligned}
\]
Consequently it holds true that
\begin{equation}
\label{contrib:3}
\interleave\T_3\interleave_{q,T,\WW^{1,p}}\leq C T^{1/2-1/p}R^3\,.
\end{equation} 

Similarly, we have 
\[
\begin{aligned}
|\T_5|_{W^{1,p}}
&\leq \int_0^t\frac{C}{(t-s)^{1/2+1/p}}|y(s)|_{L^\infty}\d s\Big|\sum\nolimits_{l\geq 1}(\C f_l)^ 2\Big|_{L^{p/2}}
\\
&\leq C' T^{1/2-1/p}|\C|_{\gamma (L^p)}^2\sup_{s\in[0,T]}|y(s)|_{W^{1,p}}\,,
\end{aligned}
\]
and hence,
\begin{equation}
\label{contrib:5}
\interleave\T_5\interleave_{q,T,\WW^{1,p}}\leq C T^{1/2-1/p}|\C|^2_{\gamma (L^p)}R.
\end{equation} 

Concerning the transport term, we have
\[
\begin{aligned}
|\T_4|_{W^{1,p}}
&\leq
C\int_0^t\frac{\Theta  _R(s)\left|(w(s)\cdot \nabla)  y(s)\right|^q_{L^{p/2}}}{(t-s)^{1/2+1/p}}\d s
\\
&\leq C \int_0^T\frac{\Theta  _R(s)\left|w(s)\right|_{L^p}^q\left|  y(s)\right|_{W^{1,p}}^q}{(t-s)^{1/2+1/p}}\d s\,,
\end{aligned}
\]
and thus:
\begin{equation}
 \label{contrib:4}
\interleave\T_4\interleave_{q,T,\WW^{1,p}}\leq C T^{1/2-1/p}R^2.
\end{equation}
The treatment of the stochastic convolution $\T_6 \equiv\int_0^\cdot e^{(\cdot -s)\Delta }y(s)\times\d  W$ works as follows:
letting $\epsilon >0$ such that
\begin{equation}
\label{parameters}
\min(\alpha,1) >\epsilon >\frac 2p\,,
\end{equation} 
then the stochastic estimates (Proposition \ref{pro:parabolic_sto}) with $\lambda =0,\delta =1-\epsilon $ imply
\begin{multline*}
\E \left[\|\T_6 \|_{C(0,T;\WW^{1,p})}^q\right]\leq C T\E\left[\|y\times\C \|_{C(0,T;\gamma (\WW^{\epsilon  ,p}))}^q\right]
\\
\leq C T|\C|_{\gamma (W^{\epsilon ,p})}^q\E\left[\|y\|_{C(0,T;\WW^{\epsilon ,p})}^q\right],
\end{multline*}
since the space $W^{\epsilon ,p},$ where $\epsilon ,p$ are subject to conditions \eqref{parameters}, is an algebra.
Hence, we end up with the estimate:
\begin{equation}
\label{contrib:6}
\interleave\T_6\interleave_{q,T,\WW^{1,p}}
\leq C^{1/q} T^{1/q}|\C|_{\gamma (W^{\alpha ,p})}\interleave y\interleave_{q,T,\WW^{1,p}}\,.
\end{equation} 

Summing the estimates \eqref{contrib:1}, \eqref{contrib:2}, \eqref{contrib:3}, \eqref{contrib:4}, \eqref{contrib:5} and \eqref{contrib:6}, we obtain \ref{point_1}.

~\paragraph{\itshape Step 2: proof of \ref{point_2}}
For $j=1,2,$ take $(w_j,y_j)\in \X_{q,T},$ let $(v_j,u_j):=\Gamma_{X_0,R}(w_j,y_j),$
and denote by 
\[\Theta _R^j(t):=\theta _R\big(\max(|w_j(t)|_{L^p},|y_j(t)|_{W^{1,p}})\big).
\]
We also define
\begin{equation}
\label{nota:tau1_tau2}
\tau _j:=\inf\{s\in[0,T]:\max(|w_j(s)|_{L^p},|y_j(s)|_{W^{1,p}})=R\}\,.
\end{equation}

For the velocity component, we write
\[
\begin{aligned}
v_1(t)-v_2(t)
&=
\int_0^t e^{(s-t)A}\mathscr P\Big(\Theta ^1_Rw_1\cdot \nabla w_1-\Theta ^2_Rw_2\cdot \nabla w_2\Big)\d s
\\
&\quad \quad 
+\int_0^te^{(s-t)A}\mathscr P\div \Big(\Theta ^1_R\nabla y_1\odot\nabla y_1 -\Theta ^2_R\nabla y_2\odot\nabla y_2\Big)\d s
\\
&=:\varDelta_1+\varDelta_2.
\end{aligned}
\]
To estimate the first term, we can assume without loss of generality that $\tau_1\leq \tau _2.$ This gives
\[
\begin{aligned}
|\varDelta_1|_{L^p}
&\leq C\int_0^{\tau _2}(\Theta ^1_R-\Theta ^2_R)\frac{\left|\mathscr P\sum\nolimits_{j=1}^2\partial _j(w_2^j w_2)\right|_{W^{-1,p/2}}}{(t-s)^{\frac12+\frac1p}}\d s
\\
&\quad \quad \quad \quad 
+ C\int^{\tau _2}_0\Theta ^1_R\frac{\left|\mathscr P\sum\nolimits_{j=1}^2\partial _j(w_1^jw_1-w_2^jw_2)\right|_{W^{-1,p/2}}}{(t-s)^{\frac12+\frac1p}}\d s
\\
&=\mathrm{I}+\mathrm{II}\,.
\end{aligned}
\]
Thanks to the continuity of $\mathscr P,$ and the fact that $x,y\mapsto \max(x,y)$ is Lipshitz, we have
\[
\begin{aligned}
|\mathrm{I}|_{L^q(\Omega )}
&\leq C(\mathscr P,p)T^{1/2-1/p}\sup_{s\in[0,T]}\big(\Theta_R(s)|w_2(s)|^2_{L^p}\big)
\\
&\quad \quad \quad 
\times\frac{|\theta '|_{L^\infty}}{R}(\interleave w_1-w_2\interleave_{q,T,\L^p}
+\interleave y_1-y_2\interleave_{q,T,\WW^{1,p}})
\\
&\leq CRT^{1/2-1/p}|\theta' |_{L^\infty}\interleave (w_1-w_2,y_1-y_2)\interleave_{\X_{q,T}}\,.
\end{aligned}
\]
On the other hand, we have
\begin{multline*}
\mathrm{II}
\leq C(\mathscr P,p)\int_0^{\tau _2}\Theta^1_R\frac{\left|\sum\nolimits_{j=1}^2\partial _j\big[(w_1^j(s)-w_2^j(s))w_1(s)\big]\right|_{W^{-1,p/2}}}{(t-s)^{\frac12 +\frac1p}}\d s
\\
+C(\mathscr P,p)\int_0^{\tau _2}\Theta^1_R\frac{\left|\sum\nolimits_{j=1}^2\partial _j[w_2^j(w_1(s)-w_2(s))]\right|_{W^{-1,p/2}}}{(t-s)^{\frac12+\frac1p}}\d s\,,
\end{multline*}
which yields the estimate
$|\mathrm{II}|_{L^q(\Omega )}
\leq CT^{1/2-1/p}R\interleave w_1-w_2\interleave_{q,T,\L^p}.$
Hence, we obtain that
\begin{equation}
 \label{contrib:D1}
 \interleave \varDelta_1 \interleave_{q,T,\L^p}\leq CT^{1/2-1/p}R\interleave(w_1-w_2,y_1-y_2)\interleave_{\X_{q,T}}
\end{equation} 

The second term is similar:
assuming without loss of generality that $\tau_1\leq \tau_2,$ we have
\[
\begin{aligned}
|\varDelta_2|_{L^p}
&\leq \int_0^{\tau_2}|\Theta^1_R-\Theta^2_R| |e^{(s-t)A}\mathscr P\div(\nabla y_2\odot\nabla y_2)|_{L^p}\d s
\\
&\quad \quad \quad 
+\int_0^{\tau_1}\Theta^1_R|e^{(s-t)A}\mathscr P\div(\nabla (y_1-y_2)\odot\nabla y_1)|_{L^p}\d s
\\
&\quad \quad \quad \quad 
+\int_0^{\tau_1}\Theta^1_R|e^{(s-t)A}\mathscr P\div(\nabla y_2\odot\nabla (y_1-y_2))|_{L^p}\d s
\\
&=: \mathrm{I}+\mathrm{II}+\mathrm{III}
\,.
\end{aligned}
\]
By a similar argument as before, we have for the first term:
\[
|\mathrm{I}|_{L^q(\Omega)}\leq C(\mathscr P,p)|\theta'|_{L^\infty}T^{1/2-1/p}R
\interleave (w_1-w_2,y_1-y_2)\interleave_{\X_{q,T}}\,.
\]
Next, proceeding as for $\varDelta_1,$ it is easy to see that:
\[\E \left[\sup_{t\in[0,T]}(\mathrm{II}(t)+\mathrm{III}(t))^q\right]^{1/q}\leq CRT^{1/2-1/p}\interleave y_1-y_2\interleave_{q,T,\WW^{1,p}}.
\]
Hence, we end up with the estimate:
\begin{equation}
\label{contrib:D2}
\interleave \varDelta_2\interleave_{q,T,\L^{p}}
\leq 
CRT^{1/2-1/p}\interleave (w_1-w_2,y_1-y_2)\interleave_{\X_{q,T}}
\end{equation}

We now proceed to the evaluation of the second component. We write
\[
\begin{aligned}
(u_1-
&u_2)(t)
=\int_0^te^{(t-s)\Delta}\big(
\Theta _R^{1}y_1|\nabla y_1|^2- \Theta _R^2y_2|\nabla y_2|^2
\big)\d s
\\
&\quad \quad \quad \quad 
-\int_0^te^{(t-s)\Delta}\big(\Theta ^{1}_R w_1\cdot\nabla  y_1-\Theta  _R^{2}w_2\cdot \nabla y_2\big)\d s
\\
&-\int_0^te^{(t-s)\Delta}\big(F_\C(y_1-y_2)\big)\d s
+\int_0^te^{(t-s)\Delta}(y_1-y_2)\times\d  W
\\
&\quad \quad \quad 
=\varDelta_3+\varDelta_4+\varDelta_5+\varDelta_6
\,.
\end{aligned}
\]
Now, linearity and Step 1 provide the bound
\begin{equation}
\label{contrib:D5_6}
\interleave\varDelta_5+\varDelta_6\interleave_{q,T,\WW^{1,p}}
\leq \interleave y_1-y_2\interleave_{q,T,\WW^{1,p}}
C\Big(T^{1/2-1/p}|\C|^2_{\gamma (L^p)} +T|\C|_{\gamma (W^{\alpha ,p})}\Big)\,.
\end{equation}

Next, recalling \eqref{nota:tau1_tau2} and assuming without loss of generality that $\tau _1\leq \tau _2,$ we have for the third term:
\[
\begin{aligned}
|\varDelta_3|_{W^{1,p}}
&\leq \int_0^{\tau _2}
\Big|e^{(t-s)\Delta}\Big(
\Theta _R^{1}y_1|\nabla y_1|^2- \Theta _R^2y_2|\nabla y_2|^2
\Big)\Big|_{W^{1,p}}\d s
\\
&\leq  \int_0^{\tau _2}(\Theta_R^1-\Theta_R^2)\Big|e^{(t-s)\Delta }(y_2|\nabla y_2|^2)\Big|_{W^{1,p}}\d s
\\
&\quad \quad 
+\int_0^{\tau _2}\Theta_R^1\Big|e^{(t-s)\Delta }(y_1|\nabla y_1|^2-y_2|\nabla y_2|^2)\Big|_{W^{1,p}}\d s
\\
&=\mathrm{I}+\mathrm{II}\,.
\end{aligned}
\]
Proceeding as for the velocity component, we can estimate the first term as
\[
|\mathrm{I}|_{L^q(\Omega )}\leq C'R^2T^{1/2-1/p}|\theta' |_{L^\infty}\interleave( w_1-w_2, y_1-y_2)\interleave_{\X_{q,T}}.
\]
For the second term, using again \eqref{hypercontractivity}, we have
\[
\begin{aligned}
\mathrm{II}
&\leq C\int_0^{\tau _2}\Theta _R^1\frac{\Big|(y_1-y_2)|\nabla y_1|^2\Big|_{L^{p/2}}}{(t-s)^{1/2+1/p}}\d s 
\\
&\quad \quad \quad \quad \quad 
+C\int_0^{\tau _2}\Theta ^1_R\frac{\Big|y_2(|\nabla y_1|^2-|\nabla y_2|^2)\Big|_{L^{p/2}}}{(t-s)^{1/2+1/p}}\d s
\\
&=\mathrm{II}'+\mathrm{III}\,.
\end{aligned}
\]
Since $\WW^{1,p}\hookrightarrow \LL^\infty,$ it holds immediately
\[
|\mathrm{II}'|_{L^q(\Omega )}\leq CR^2T^{1/2-1/p} \interleave y_1-y_2\interleave_{q,T,\WW^{1,p}}.
\]
Similarly, we have
\[
\begin{aligned}
|\mathrm{III}|_{L^q(\Omega )}
&\leq \E\Big[\Big(\int_0^{\tau _2}\Theta ^1_R \frac{\big|y_2\nabla (y_1-y_2)\cdot \nabla (y_1+y_2)\big|_{L^{p/2}}}{(t-s)^{1/2+1/p}}\d s\Big)^q\Big]^{1/q}
\\
&\leq 
C R^2 T^{1/2-1/p} \interleave y_1-y_2\interleave_{q,T,\WW^{1,p}}\,.
\end{aligned}
\]
Finally, we see that there is a universal constant $C>0$ such that:
\begin{equation}
\label{contrib:D3}
\interleave\varDelta_3\interleave_{q,T,\WW^{1,p}} 
\leq C R^2T^{1/2-1/p}\interleave (w_1-w_2,y_1-y_2)\interleave _{\X_{q,T}}
\,.
\end{equation}

For the fourth term, it is again sufficient to assume that $\tau _1\leq \tau _2.$
But in this case it holds
\[
\begin{aligned}
|\varDelta_4|_{W^{1,p}}
&\leq\int_0^{\tau _2}\Big|e^{(t-s)\Delta }\big(\Theta _R^1w_1\cdot \nabla y_1 - \Theta _R^2w_2\cdot \nabla y_2\big)\Big|_{W^{1,p}}\d s
\\
&\leq \int_0^{\tau _2}|\Theta ^1_R-\Theta ^2_R|\Big|e^{(t-s)\Delta }w_2\cdot \nabla y_2\Big|_{W^{1,p}}\d s
\\
&\quad \quad \quad 
+\int_0^{\tau _2}\Theta ^1_R\Big|e^{(t-s)\Delta }(w_1\cdot \nabla y_1 -w_2\cdot \nabla y_2)\Big|_{W^{1,p}}\d s\,.
\end{aligned}
\]
Whence, we obtain as before:
\[
\begin{aligned}
\interleave \varDelta_4\interleave_{q,T,W^{1,p}}
&\leq 
C R^2|\theta '|_{L^\infty}T^{1/2-1/p}\interleave (w_1-w_2,y_1-y_2)\interleave_{\X_{q,T}}
\\
&\quad \quad \quad 
+\E\Big[\Big(\int_0^{T}\Theta _R^1 \big|e^{(t-s)\Delta }\big[(w_1-w_2)\cdot \nabla y_2\big]\big|_{W^{1,p}}\d s\Big)^q\Big]^{1/q}
\\
&\quad \quad \quad \quad \quad
+\E\Big[\Big(\int_0^{T}\Theta _R^1 \big|e^{(t-s)\Delta }\big[w_1\cdot \nabla(y_1-y_2)\big]\big|_{W^{1,p}}\d s\Big)^q\Big]^{1/q}
\\
&\leq 
C'R^2 T^{1/2-1/p}\interleave (w_1-w_2,y_1-y_2)\interleave_{\X_{q,T}}\,.
\end{aligned}
\]
Hence, it follows that for some universal constant $C>0:$
\begin{equation}
\label{contrib:D4}
\interleave\varDelta_4\interleave_{q,T,\WW^{1,p}}
\leq CR^2T^{1/2-1/p}\interleave (w_1-w_2,y_1-y_2)\interleave_{\X_{q,T}}\,.
\end{equation} 

Summing the estimates \eqref{contrib:D1}, \eqref{contrib:D2}, \eqref{contrib:D3}, \eqref{contrib:D4} and \eqref{contrib:D5_6}, we see that there exists a constant $C(p,|\C|_{\gamma (W^{\alpha ,p})})>0$ such that
\begin{multline*}
\interleave\Gamma (w_1,y_1)-\Gamma (w_2,y_2)\interleave_{\X_{q,T}}
\\
\leq C(p,|\C|_{\gamma (W^{\alpha ,p})})(1+R^2)\big(T^{1/2-1/p} + T \big)\interleave(w_1,y_1)-(w_2,y_2)\interleave_{\X_{q,T}}.
\end{multline*}
If we choose $T$ such that
\[
0<T\leq T^*(R):=\min\left(1,4C(p,|\C|_{\gamma (W^{\alpha ,p})})(1+R^2)\right)^{-\frac{2p}{p-2}},
\]
then $\Gamma _{X_0,R}:\X_{q,T}\to\X_{q,T}$ is a contraction. This proves \ref{point_2}.

~\paragraph{\itshape Step 3: definition of the maximal solution}
Using a localization procedure (see \cite[Theorem 4.1]{de2002effect}) we can build a maximal solution as follows:
we define the stopping times
\begin{equation}\label{nota:tau_m}
\tau_m=\inf\left\{t\in[0,T],\enskip \max\big(|v_m(t)|_{L^p},|u_m(t)|_{W^{1,p}}\big)\geq m\right\}\, ,
\end{equation}
and show that
the sequence $\{\tau _m\}_{m\in\N}$ is non-decreasing and that 
\[
(v_{m+1}(t),u_{m+1}(t))=(v_m(t),u_{m}(t))\quad \text{for} \enskip t\in[0,\tau_m],\enskip  \text{a.s.}
\]
Since the proof of these properties does not involve any new idea compared to steps 1 and 2,
we leave the details to the reader.
The maximal solution $(v;u,\tau \equiv\sup_{m\geq 0}\tau _m)$ is then defined by ``gluing together'' $(v_m,u_m)$ for each $m\geq 1.$

~\paragraph{\itshape Step 4: General case}
It is sufficient to let $s=3$ (the general case follows by interpolation).

Let $(v,u)\in L^q\big(\Omega ;C([0,\tau ^{p,1});\L^p\times\WW^{1,p})\big)$ be the solution provided by the above fixed point, and assume that $X_0=(v_0,u_0)$ belongs to $\W^{2,p}\times \WW^{3,p}.$
For $(\tilde w,\tilde y)\in \X_{q,T},$ define $(\tilde v, \tilde u):=\Gamma _{X_0,R}^{\Delta }(\tilde w, \tilde y)$ via the relation 
\[
\begin{aligned}
\tilde v(t)&
=e^{-tA}\Delta v_0+\int_0^t\theta _R(\max(|\tilde w(s)|_{L^p},|\tilde y(s)|_{W^{1,p}}))
\\
&\quad \quad \quad \quad \quad \quad 
\times e^{A(s-t)}\mathscr P[- \tilde w\cdot \nabla v - 2\nabla v\cdot \nabla ^2v - v\cdot \nabla \tilde w
\\
&\quad \quad \quad \quad \quad \quad \quad 
-\div(\nabla \tilde y\odot\nabla u +2\nabla ^2u\odot \nabla ^2u +\nabla u\odot\nabla \tilde y)]\d s
\\
\tilde u(t)&
=e^{t\Delta }\Delta u_0+\int_0^t\theta _R(\max(|\tilde w(s)|_{L^p},|\tilde y(s)|_{W^{1,p}}))
\\
&\quad \quad \quad \quad \quad \quad 
\times e^{\Delta (t-s)}\big(\tilde y|\nabla u|^2 + 4\nabla u\cdot \nabla ^2u\nabla u + 2u (\nabla \tilde y + \nabla ^2u\cdot \nabla ^2u)
\\
&\quad \quad \quad \quad \quad \quad 
-\tilde w\cdot \nabla u - 2\nabla v\cdot \nabla ^2u - v \cdot \nabla \tilde y
-\Delta F_\C u - 2\nabla F_\C \cdot \nabla u -F_\C \tilde y\big)\d s
\\
&\quad \quad \quad \quad \quad 
+\int_0^t e^{\Delta (t-s)}\left[\tilde y\times \d  W+2\nabla u\times \d \nabla W+u\times \d \Delta W\right],
\end{aligned}
\]
Using similar arguments as that of Step 1 and Step 2, it can be shown that the map $\Gamma _{X_0,R}^{\Delta }$ admits a unique fixed point $(\tilde v,\tilde u)$, for every $R>0.$ Moreover, observing that $\Delta $ commutes with $e^{\cdot A}$ and with $\mathscr P$ (see \eqref{fourier}), it is immediately checked that on $\{\max(|\tilde v(t)|_{L^p},|\tilde u(t)|_{W^{1,p}})\leq R\},$
we have
$\tilde v(t)=\Delta v(t)$ while $\tilde u(t)= \Delta u(t),$ which shows the claimed regularity on $(v,u).$

The details of the proof being similar to step 1 and step 2 (the difficulties are merely notational), we leave them to the reader.

~\paragraph{\itshape Step 5: spherical constraint}
For $p>2$ and $s\geq 2,$ if $(v,u)$ is defined as above, we can see it as a continuous path with values in the Hilbert space $H:=\H^{-1}\times \LL^2$.
Because $(v,u)$ takes values in the domain of $L:=(A,-\Delta ),$ it is in fact a \emph{strong solution} in the sense of \cite[p.\ 160]{DPZ} (up to the stopping time $\tau ^{p,s}$).
Hence, it is possible to apply It\^o Formula to the functional $F(v,u):=\frac12|1-|u|^2|_{L^2}^2$ (which is clearly of class $C^2$ from $D(L)\equiv \H^{1}\times\HH^2$ to $\R$).
Using the identity $\Delta (|u|^2-1)=2u\cdot \Delta u+ 2|\nabla u|^2,$ this gives
\begin{equation}
 \begin{aligned}
\label{pre_gronwall}
\frac12
&\int_{\O}(|u(t)|^2-1)^2\d x
\\
&=\iint_{[0,t]\times \O} \big[2|u|^2(|u|^2-1)|\nabla u|^2 + 2u\cdot \Delta u(|u|^2-1) - 2u\cdot \partial _iuv^i(|u|^2-1)\big]\d x\d s
\\
&=\iint_{[0,t]\times \O} \big[2(|u|^2-1)^2|\nabla u|^2 -2|\nabla (|u|^2-1)|^2 - 2u\cdot \partial _iuv^i(|u|^2-1)\big]\d x\d s
\\
&\leq 
4\int_0^t |\nabla u(s)|^2_{L^\infty}F(v(s),u(s))\d s - 2\iint_{[0,t]\times\O}u\cdot \partial _iuv^i(|u|^2-1)\d x\d s
\\
&
=\mathrm{I}+\mathrm{II}\,.
\end{aligned}
\end{equation}

As seen by integration by parts, using that $\div v=0$ we have
\[
\mathrm{II}:=-\iint_{[0,t]\times \O}2u\cdot \partial _iuv^i(|u|^2-1)\d x\d s
=-\iint_{[0,t]\times \O}\partial _i(|u|^2-1)v^i(|u|^2-1)\d x\d s
=-\mathrm{II}\,,
\]
and therefore $\mathrm{II}=0.$
Next, by Gronwall Lemma applied to \eqref{pre_gronwall}, we see that provided $0\leq t<\tau ^{s,p}$ is such that
\begin{equation}
\label{gradient_u}
\int_0^t |\nabla u(s)|_{L^\infty}\d s<\infty,
\end{equation}
(which is always true by the Sobolev embedding $W^{p,s}\hookrightarrow W^{1,\infty}$ for $p>2$ and $s\geq 2$)
then one has $F(v(t),u(t))=0.$
This proves the norm constraint \eqref{sphere_constraint}.
\hfill\qed

\section{A priori estimates}
\label{sec:apriori}

Our purpose in this section is to obtain suitable a priori estimates associated to \eqref{SEL}, assuming that the solution at hand is regular enough.
We divide them into two categories, namely the ``global estimates'', i.e.\ estimates on the whole time interval, and the ``local estimates'', i.e.\ estimates on small balls $B(x,\rho )\subset\O$. Estimates that are local in the time-like variable will be obtained in Section \ref{sec:bootstrap} below.

We start by stating an important byproduct of \eqref{ineq:interp}.
Note that in \cite{St1}, the left hand side below is a gradient, which is unnecessary (as can be immediately seen in the details of the proof).

\begin{proposition}[Struwe \cite{St1}, Lemma 3.1]
\label{pro:interp2}
There exists a constant $\mu _1>0$, such that for all $v\in C(0,T;H^1(\O))$, for all $\rho>0$:
\begin{equation}
\label{ineq:interp_loc}
\iint\limits_{[0,T]\times\O}|v|^4\d y\d t \leq \mu _1\left(\sup_{\substack{t\in [0,T]\\ x\in\O}}\int\limits_{y\in B(x,\rho )}|v(t,y)|^2\d y\right)
\iint\limits_{[0,T]\times\O}\Bigg(|\nabla v|^2+\frac{|v|^2}{\rho^2}\Bigg)\d y\d t\,.
\end{equation}
\end{proposition}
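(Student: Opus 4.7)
\medskip

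\noindent\textbf{Proof sketch.}
The plan is to reduce the global $L^4_{tx}$ estimate to a sum of local Ladyzhen\-skaya-type inequalities on small balls of radius $\rho$, and then to integrate in time and pull the supremum out. The first ingredient is the rescaled local version of \eqref{ineq:interp}: for every $\phi\in H^1(\mathbb R^2)$ and every ball $B(x,\rho)\subset\O$,
\begin{equation*}
\int_{B(x,\rho)}|\phi|^4\d y \;\leq\; C \left(\int_{B(x,\rho)}|\phi|^2\d y\right)\left(\int_{B(x,\rho)}|\nabla\phi|^2\d y + \frac{1}{\rho^2}\int_{B(x,\rho)}|\phi|^2\d y\right).
\end{equation*}
This inequality follows from \eqref{ineq:interp} (applied to an extension of $\phi_{|B(x,\rho)}$ and compared with the analogous statement on a fixed unit ball, which is a standard two-dimensional Gagliardo--Nirenberg inequality) after the change of variables $\tilde\phi(\tilde y)=\phi(x+\rho\tilde y)$: the four factors of $\rho$ on each side combine to give exactly the gradient term $\int|\nabla\phi|^2$ and the zero-order correction $\rho^{-2}\int|\phi|^2$.

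Next, I would cover the torus $\O$ by a finite family of balls $\{B(x_k,\rho)\}_{k=1}^{N}$ with $N\leq C\rho^{-2}$ and with bounded multiplicity, namely there exists a universal constant $M$ (independent of $\rho$) such that
\begin{equation*}
\sum_{k=1}^N\mathbf 1_{B(x_k,\rho)}(y)\leq M,\qquad\text{for every}\enskip y\in\O.
\end{equation*}
Such a covering is built by intersecting $\O$ with a periodic grid of side $\rho/2$ (on the torus one truncates at $\rho\leq 1/2$; for $\rho\geq 1/2$ the inequality \eqref{ineq:interp_loc} reduces to the global one \eqref{ineq:interp}, up to an absorption of $\rho^{-2}$).

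Applying the local inequality to each $B(x_k,\rho)$ with $\phi=v(t,\cdot)$, bounding $\int_{B(x_k,\rho)}|v|^2$ by $\sup_{y\in\O}\int_{B(y,\rho)}|v(t,\cdot)|^2$, summing over $k$, and using the bounded-overlap property to absorb the second factor, one obtains for every $t\in[0,T]$
\begin{equation*}
\int_\O|v(t,y)|^4\d y\;\leq\; CM\left(\sup_{y\in\O}\int_{B(y,\rho)}|v(t,z)|^2\d z\right)\int_\O\!\left(|\nabla v(t,y)|^2+\frac{|v(t,y)|^2}{\rho^2}\right)\d y.
\end{equation*}
It then suffices to integrate this estimate in $t\in[0,T]$ and to extract the $t$-supremum of the local-energy factor, yielding \eqref{ineq:interp_loc} with $\mu_1:=CM$ a universal constant.

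The main, albeit mild, technical obstacle is the scaling of the zero-order term $\rho^{-2}\int|\phi|^2$: one has to verify that the correction coming from the change of variable dominates (and is not absorbed by) the gradient term at the scale of $\rho$, so that the bound is dimensionally consistent; this is exactly what forces the $\rho^{-2}$ weight in the right-hand side of \eqref{ineq:interp_loc}. Everything else is a standard finite-overlap partitioning argument.
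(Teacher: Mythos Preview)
The paper does not give its own proof of this proposition: it is simply quoted from Struwe \cite{St1}, Lemma~3.1, with the remark that the original statement has $|\nabla v|$ in place of $|v|$ on the left, which is irrelevant to the argument. Your sketch reproduces exactly the standard Struwe argument --- local scaled Ladyzhenskaya inequality on balls of radius $\rho$, a finite bounded-overlap covering of $\O$, summation and extraction of the local-energy supremum --- and is correct.
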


\subsection{Global estimates}
\label{subsec:global}

Let us first introduce some notation.

\begin{notation}
Since it plays a specific role along the proofs, we will denote the ``tension'' by 
\begin{equation}\label{nota:tension}
\tt_u:=\Delta u+u|\nabla u|^2\,.
\end{equation}
The latter terminology is borrowed from \cite{eells1964harmonic}, in the context of harmonic map flows. Roughly speaking, the $L^2$-norm of $\tt_u$ measures ``how far'' $u$ is from being harmonic. Note indeed that $\tt_u=0$ if and only if $u:\O\to\mathbb{S}^2$ is weakly harmonic.

An important observation is that the tension equals the orthogonal projection of $\Delta u$ onto $u^\perp,$ that is:
\[
\tt_u=\Delta u - (\Delta u\cdot u)u\,,
\]
whenever the latter makes sense as a distribution.
\end{notation}

\begin{proposition}
 \label{pro:global_estimates}
 Let $(W,\C)$ be as in Assumption \ref{ass:W}.
Assume that $(v,u)$ is a strong solution to \eqref{SEL} on $[0,T],$ with respect to $(\mathfrak P,W),$ in the sense of Definition \ref{def:strong_sol}.
Denoting by 
\[
E(t):=\frac{1}{2}\left(|v(t)|_{L^2}^2+|\nabla u(t) |^2_{L^2}\right),
\]
then it holds true that for any $0\leq t\leq T,$ a.s.: 
\begin{equation}
\label{estim:energy}
E(t)-E(0) +  \int_0^t\left(|\nabla v(s)|_{L^2}^2+|\tt_u(s)|^2_{L^2}\right)\d s = t|\nabla \C |_{\mathscr L_2(L^2,L^2)}^2+X(t),
\end{equation}
where $(X(t))$ is the martingale defined by the It\^o integral
$X(t):=\int_0^t\langle u\times \d \nabla  W,\nabla u \rangle,$ $t\in [0,T].$

Moreover, for any $m\geq 1$, the following estimate holds:
\begin{multline}
\label{estim:moments}
\E\left[ \sup_{t\in[0,T]} E(t)^{m}\right] + \E\left[\left(\int_0^T|\nabla v(t)|_{L^2}^2 +|\tt_u(t)|_{L^2}^2\d t \right)^m \right]
\\
\leq C\left(T,m,|\C|_{\hs{1}},E(0)\right)
\end{multline}
where the above constant depends on the indicated quantities, but not on the individual element $(v,u).$
\end{proposition}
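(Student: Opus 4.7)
The plan is to apply It\^o's formula to the functional $E(u,v) := \frac12|v|_{L^2}^2 + \frac12|\nabla u|_{L^2}^2$, viewed on the Gelfand triple $V \subset H \subset V^*$ with $V=\L^2\times\HH^1$, $H=\H^{-1}\times\LL^2$, exactly as in the uniqueness proof. Because $(v,u)$ is a strong solution in the sense of Definition \ref{def:strong_sol}, the process takes values in $D(A)\times D(\Delta)$ for a.e.\ time, so that every term appearing below makes rigorous sense and \cite[Theorem 4.2.5]{prevot2007concise} (or equivalently the strong form of It\^o's formula in \cite[Chapter 6]{DPZ}) applies. The two components may be handled in parallel: pairing the Navier--Stokes equation with $v$ gives $-|\nabla v|^2$, kills the convective term by $\div v = 0$, and leaves the coupling term $+\langle \nabla v, \nabla u\odot\nabla u\rangle$; pairing the director equation with $-\Delta u$ gives $-|\Delta u|^2 +\int|\nabla u|^4$, plus the coupling $\langle \Delta u, v\cdot \nabla u\rangle$, plus the Stratonovich correction contribution $\langle \Delta u, -F_\psi u\rangle$.

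The two crucial algebraic simplifications are as follows. First, an integration by parts using $\div v = 0$ yields
\begin{equation*}
\langle \Delta u, v\cdot \nabla u\rangle = -\langle \nabla v, \nabla u \odot \nabla u\rangle,
\end{equation*}
so the two coupling contributions cancel exactly. Second, the spherical constraint $|u|=1$ differentiated twice gives $u\cdot \Delta u = -|\nabla u|^2$, hence
\begin{equation*}
|\mathscr{T}_u|^2 = |\Delta u + |\nabla u|^2 u|^2 = |\Delta u|^2 - |\nabla u|^4,
\end{equation*}
which turns $-|\Delta u|^2 + \int|\nabla u|^4$ into $-|\mathscr{T}_u|^2$. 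It remains to compute the It\^o correction $\tfrac12 \sum_{l,j} \int |\nabla(u\times \psi_l^j)|^2\d x$: expanding $\nabla(u\times \psi_l \mathbf{e}_j) = \nabla\psi_l\,(u\times \mathbf{e}_j) + \psi_l\,(\nabla u\times \mathbf{e}_j)$ and using the pointwise identities $\sum_j |a\times \mathbf{e}_j|^2 = 2|a|^2$ together with the cross-term identity $\sum_j(u\times \mathbf{e}_j)\cdot(\partial_i u\times \mathbf{e}_j) = 2\,u\cdot \partial_i u = 0$ (again by $|u|=1$) produces
\begin{equation*}
\tfrac12\sum_{l,j}\int|\nabla(u\times\psi_l^j)|^2\d x = |\nabla\psi|_{\mathscr{L}_2(L^2,L^2)}^2 + \sum_l\int \psi_l^2 |\nabla u|^2 \d x.
\end{equation*}
The second summand cancels exactly with $\langle\Delta u,-F_\psi u\rangle = -\sum_l \int \psi_l^2|\nabla u|^2\d x$ coming from the drift. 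What remains is precisely $|\nabla\psi|^2_{\mathscr{L}_2(L^2,L^2)}\d t$ plus the martingale $dX$, where after a single integration by parts the noise pairing $\langle-\Delta u, u\times dW\rangle$ rewrites as $\langle u\times d\nabla W,\nabla u\rangle$ (the term $\int \nabla u\cdot(\nabla u\times dW) = dW\cdot\sum_i \partial_i u\times \partial_i u$ vanishes pointwise). This establishes \eqref{estim:energy}.

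For \eqref{estim:moments}, I would use the Burkholder--Davis--Gundy inequality on $X$ together with the bound
\begin{equation*}
\frac{d\langle X\rangle_t}{dt} = \sum_{l,j}\langle u\times \nabla\psi_l^j,\nabla u\rangle^2 \leq 3|\nabla \psi|_{\mathscr{L}_2(L^2,L^2)}^2 \cdot 2 E(t),
\end{equation*}
obtained via Cauchy--Schwarz and $|u|=1$. This gives $\E[\sup_{s\leq t}|X(s)|^m] \leq C_m T^{m/2} |\nabla\psi|^m_{\mathscr{L}_2}\,\E[\sup_{s\leq t} E(s)^{m/2}]$, which Young's inequality converts into a term $\tfrac12\E[\sup E^m] + C\,T^m|\nabla\psi|^{2m}_{\mathscr{L}_2}$ that can be absorbed. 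Combining with the identity \eqref{estim:energy} (which already bounds the dissipation integral in terms of $E(0)$, $T|\nabla\psi|^2$ and $|X(T)|$) closes the estimate by Gronwall/absorption and yields the stated bound.

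The main obstacle will be the rigorous justification of It\^o's formula at the claimed regularity: a priori we only know $u\in \mathbb{U}^1_{\loc}$ and $v\in \mathscr{V}^0_{\loc}$, so the quartic integrand $\int|\nabla u|^4$ and the product $\langle \Delta u, v\cdot \nabla u\rangle$ must be shown to be a.s.\ integrable. This is handled by the interpolation inequality \eqref{ineq:interp} (which places $\nabla u\in L^4_{t,x}$ in terms of $L^\infty_t H^1_x\cap L^2_t H^2_x$) together with the $D(\Delta)$-valuedness of $u$ granted by strong solvability; the $\mathscr{P}$-boundedness ensures the remaining terms lie in $V^*$. Once this integrability is secured, the two cancellations above and the BDG computation proceed as described.
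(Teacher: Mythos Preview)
Your proposal is correct and follows essentially the same route as the paper: It\^o's formula applied to $E$, the cancellation $\langle v,\div(\nabla u\odot\nabla u)\rangle+\langle\nabla u,\nabla(v\cdot\nabla u)\rangle=0$, the tension identity $|\tt_u|^2=|\Delta u|^2-|\nabla u|^4$, and then BDG plus Gronwall for the moment bound. The only organizational difference is that the paper keeps the noise in Stratonovich form and computes the correction as the trace terms $A^1+A^2$ (Appendix \ref{app:trace}), whereas you work in the It\^o picture and obtain it as the second-order term $\tfrac12\sum_{l,j}|\nabla(u\times\CC_l^j)|_{L^2}^2$ together with the drift contribution from $F_\C u$; these two bookkeepings are equivalent and produce the same $|\nabla\C|_{\mathscr L_2(L^2,L^2)}^2$.

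One small caution: be careful with the sign of the $F_\C u$ pairing. When you pair the It\^o drift with $-\Delta u$, the Stratonovich-correction piece yields $\langle -\Delta u,\ (\text{It\^o--Stratonovich drift term})\rangle$, and you need this to equal $-\sum_l\int\psi_l^2|\nabla u|^2$ so as to cancel the corresponding piece of the second-order It\^o term. Your displayed identity $\langle\Delta u,-F_\C u\rangle=-\sum_l\int\psi_l^2|\nabla u|^2$ is arithmetically correct, but make sure it is consistent with the sign convention you fixed for the It\^o drift (the paper's own sign convention for $F_\C$ in \eqref{ito_correction} and \eqref{variational_identity_2} is easy to misread here). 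Once that is tracked consistently the cancellation indeed leaves exactly $|\nabla\C|_{\mathscr L_2}^2$. For \eqref{estim:moments} your BDG--Young absorption argument is fine; just insert a stopping-time localization so that $\E[\sup_{s\le t}E(s)^m]$ is a priori finite before absorbing.
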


\begin{proof}
\textit{Proof of \eqref{estim:energy}.}
We can apply It\^o Formula in the form given e.g.\ in \cite[Chapter 4]{prevot2007concise} to the functional
$(v,u)\in \L^2\times \HH^{1}\mapsto \frac12(|v|_{L^2}^2 + |\nabla u|_{L^2}^2).$
It yields:
\begin{equation}
\label{first_ito}
\begin{aligned}
E(t)-E(0)
&= \int_0^t\left\langle\d v,v\right\rangle+ \int_0^t\langle\nabla \d u ,\circ\nabla u \rangle
\\
&=\int_0^t\Big\langle-Av-(v\cdot \nabla )v-\div(\nabla u \odot \nabla u ) ,v\Big\rangle\d t
\\
&\quad 
+\int_0^t\Big\langle\nabla (-v\cdot \nabla u +\Delta u +|\nabla u |^2u) ,\nabla u\Big\rangle\d t
+ \int_0^t\big\langle\nabla (u \times\circ \d  W),\nabla u \big\rangle,
\end{aligned}
\end{equation} 
where the above Stratonovitch integral makes sense as 
\begin{equation}
\label{above_strato}
\begin{aligned}
\int_0^t
\big\langle\nabla (u \times\circ \d  W),\nabla u \big\rangle
&=\int_0^t\langle u \times\d \nabla  W,\nabla u \rangle
\\
+\frac12\sum_{l\in\N}\sum_{j=1}^3 
&\int _0^t\Big [\langle (u\times \CC_l^j)\times\nabla \CC_l^j,\nabla u\rangle 
+\langle u\times\nabla \CC_l^j, \nabla (u\times \CC_l^j)\rangle \Big ]\d s
\\
&=:X(t) + \int _0^t(A^1(s)+A^2(s))\d s.
\end{aligned}
\end{equation}
It is easily seen that the above trace terms are finite a.s.\ , thanks to our assumptions on $u$ and $\C$.
The computation of the trace terms $A^i$ is detailed in Appendix \ref{app:trace} and leads to
\begin{equation}
\label{trace:1}
A^1(t)=0\quad \text{and}\quad A^2(t)=2|\nabla \C|_{\mathscr L_2(L^2,L^2)}^2,\quad t\in[0,T].
\end{equation}
Therefore, we can rewrite \eqref{first_ito} as:
\[
\begin{aligned}
E(t)-E_0
&- X(t) -t|\nabla \C|_{\mathscr L_2(L^2,L^2)}^2
+\int_{0}^t|\nabla v(s)|^2_{L^2}\d s
\\
&=
\int_0^t\Big[-\langle v\cdot \nabla v, v\rangle
-\langle\div(\nabla u \odot \nabla u ) , v\rangle
-\langle\nabla (v\cdot \nabla u),\nabla u\rangle
\\
&\quad \quad \quad \quad \quad 
\quad \quad \quad \quad \quad 
-\langle\Delta u, \Delta u +|\nabla u |^2u \rangle\Big]\d s
\\
&=:\int _0^t\big [\T_1+\T_2+\T_3+\T_4\big] \d s,
\end{aligned}
\]
and we now treat each term separately.

For the first term,
integration by parts gives
\[
\T_1\equiv\int_{\O} v^i\partial _iv^jv^j\d x
=-\int_{\O} \partial _i(v^iv^j)v^j\d x=-\int_\O|v|^2\div v\d x -\int_\O v^i\partial _iv^jv^j\d x\,,
\]
where we use a summation convention over repeated indices.
Hence:
\[
\T_1=-\T_1=0\,.
\]

Concerning the second and third terms, we can proceed as in \cite{LL1}: we have
\[
\T_2\equiv-\int_{\O}v^i\partial _j(\partial _j u)\cdot \partial _iu\d x
=-\int_{\O}\Delta u\cdot (v\cdot \nabla u)\d x
=\int_{\O}\nabla u\cdot \nabla (v\cdot \nabla u)\d x\,,
\]
so that:
\[
\T_2+\T_3=0.
\]

By the fact that $\tt_u\perp u$ (see \eqref{nota:tension}), we have the pointwise identity:
\begin{equation}
\label{id_tension}
\big(\Delta u +u |\nabla u |^2\big)\cdot \Delta u
\equiv\tt_u\cdot \Delta u =|\tt_u|_{\R^3}^2.
\end{equation} 
Therefore,
\[
\T_4= -\int_{\O}|\tt_u|^2\,.
\]

Summing every term above eventually yields \eqref{estim:energy}.

~\paragraph{\itshape Proof of \eqref{estim:moments}}
Let $m>1.$ From the Burkholder-Davis-Gundy inequality, one can bound $X$ as follows:
\[
\E\left[\sup_{s\in[0,t]}|X(s)|^m\right] \leq C(m) \E \left[\left(\int_0^t|\C^* \div(u\times\nabla u)|^2_{L^2}\d s\right)^{m/2}\right].
\]
Hence, using the embedding $\hs{1}\hookrightarrow \mathscr L(L^2,H^1),$ we obtain
\begin{multline}
\label{estim:mart}
 \E\left[\sup_{s\in [0,t]}|X(s)|^m\right]
 \leq C(|\C|_{\hs{1}})\E\left[\int_0^t|u\times \nabla u|_{L^2}^2\d s\right]
\\
 = C(|\C|_{\hs{1}})\E\left[\int_0^t|\nabla u|_{L^2}^2 \d s\right].
\end{multline}
Hence, going back to \eqref{estim:energy}, we can use \eqref{estim:mart}, yielding
for any $t\in[0,T]:$
\[
\Phi (t) \leq C\left(1 +\int_0^t\Phi (s)\d s\right),
\]
where we denote by $\Phi (t):=\E[\sup_{s\in[0,t]}E(s)^m],$
and where the above constant depends only on $E_0,m,T$ and $|\C|_{\hs{1}}.$
The first part of the conclusion hence follows by Gronwall Lemma.

Using again \eqref{estim:energy}, together with the bound obtained on $\Phi ,$ 
we can now bound the term $\E[(\int_0^T[|\nabla v|_{L^2}^2+|\tt _u|^2]\d t)^m]$ in the same way.
\end{proof}
\subsection{Local estimates}
\label{subsec:local}
In this paragraph, we estimate the local energy $\frac12\int_{B(x,\rho )}|v(t,y)|^2+|\nabla u(t,y)|^2\d y,$ where $\rho >0$ and $x\in\O.$
This requires to test \eqref{SEL} against the localization function $\mathds 1_{B(x,\rho)}$ (or a suitable regularized version thereof), which is definitely not divergence-free in general.
To proceed further, we therefore need to introduce the pressure term associated to a solution.

Let $(v,u)$ be a strong solution in the sense of Definition \ref{def:strong_sol} on $[\tau _1,\tau _2).$
Using \eqref{variational_identity_1} against test functions of the form $-\nabla \theta $ where $\theta \in L^2(\O;\R),$
and denoting by $\tilde\pi (\omega ,t):=\pi-\int_{\O}\pi(\omega ,t,x)\d x,$ 
we see from the original equation \eqref{SEL} that $\tilde\pi $ must be given by the equation
\begin{equation}
\label{variational_pi}
\Delta \tilde\pi = -\partial _{ij}(v^iv^j)
-\partial _{ij}(\partial _iu\cdot \partial _ju),\quad \langle\tilde\pi ,1\rangle=0\,,
\end{equation}
for a.e.\ $(\omega ,t)\in\Omega \times[0,T]$ such that $\tau _1(\omega )\leq t<\tau _2(\omega ).$
From \eqref{ineq:interp} we see that 
\[
v\in L^0(\Omega ;L^4([\tau _1,\tau _2);\L^4)),
\]
while
\[
\nabla u\in L^0(\Omega ;L^4([\tau _1,\tau _2);\L^4)).
\]

Therefore, using the fact that the Laplace operator is an isomorphism, from $L^2(\O)$ to $H^{-2}(\O)$, when restricted to the orthogonal of constant functions, we infer from H\"older Inequality that
\[
\tilde\pi \in L^0(\Omega ;L^2([\tau _1,\tau _2);L^2)),
\]
together with the pathwise estimate:
\begin{equation}
\label{estim:pi_0}
\|\tilde\pi (\omega )\|_{L^2(\tau _1;\tau _2';L^2)}
\leq C\left(\|v(\omega )\|_{L^4(\tau _1;\tau _2';L^4)}^2 + \|\nabla u(\omega )\|_{L^4(\tau _1;\tau _2';L^4)}^2\right)
\end{equation} 
for $\P$-a.e.\ $\omega \in\Omega $ and for every stopping time $\tau _2'$ such that $\tau _1\leq \tau _2'<\tau_2.$

On the other hand, choosing any constant $c\in\R,$ one can let $\pi:=\tilde\pi+c,$ and with this definition it is immediately checked that the equation
\begin{equation}
\label{eq:v_pi}
\d v+(-\Delta v+\nabla \pi +v\cdot \nabla v+\div\nabla u\odot\nabla u)\d t=0
\end{equation}
holds for almost every $\omega\in\Omega ,$ in the sense of Bochner in $H^{-1}(\O;\R^2)$.

\begin{proposition}
\label{pro:local_estimates}
Let $(W,\C)$ be as in Assumption \ref{ass:W}.
Consider $\varphi \in C^\infty(\O),$ $0\leq \varphi \leq 1,$ supported in the ball $B(x,\rho)$ for some $x\in\O$ and some $\rho>0,$ and assume that there exists 
$K>0$ with 
\[
\sup_{y\in\O}|\nabla \varphi (y)|\leq \frac{C}{\rho}\,.
\]
Assume that $(v,u)$ is a strong solution to \eqref{SEL} on $[0,T],$ with respect to $(\mathfrak P,W),$
and for any $t\in[0,T],$ define $E^\varphi (t):=\frac12\int_{\O}\varphi ^2(|v(t,y)|^2+|\nabla u(t,y)|^2)\d y.$

Then, there is a universal constant $C_0>0$ such that for every $t\in[0,T],$ it holds
\begin{multline}
\label{ineq:local}
 E^\varphi (t)-E^{\varphi }(0) 
 +\frac12\iint_{[0,t]\times \O}\varphi ^2\left(|\nabla v|^2+|\tt_u|^2\right)\d x\d s
 \\
 \leq C_0\left(1+\frac{1}{\rho^2}\right)\iint_{[0,t]\times \O}\Big(|v|^3+(|v|+1)|\nabla u|^2 +|v||\tilde \pi|\Big)\d x\d s
+ X^\varphi (t)  +C_\C ^\varphi t\,,
\end{multline}
where
$X^\varphi(t) :=\int_0^t\langle \varphi\nabla u,\varphi u\times\d  W\rangle,$
$C^\varphi _\C :=\sum_{l\in\N}\int_\O\varphi ^2 (x)|\nabla \C f_l(x)|^2\d x,$
and $\tilde \pi \equiv \pi-\langle \pi,1\rangle$ is the pressure term corrected by its mean value -- see the above discussion.
\end{proposition}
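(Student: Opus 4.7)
The plan is to mimic the proof of Proposition \ref{pro:global_estimates}, replacing the unweighted energy by the localized functional
\[
F^\varphi(v,u):=\tfrac12\int_\O\varphi^2\big(|v|^2+|\nabla u|^2\big)\,dx.
\]
Since $(v,u)$ is a strong solution, It\^o's formula from \cite[Chapter 4]{prevot2007concise} applies, and because $\varphi^2 v$ is no longer divergence-free, I must start from the pressurized equation \eqref{eq:v_pi} rather than from the projected one. I then reproduce the four-term drift decomposition of \eqref{first_ito} with $\varphi^2$-weights, plus the Stratonovich integral tested against $\varphi^2 \nabla u$.

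\textbf{Deterministic step.} Each integration by parts produces a ``main'' term identical to the unweighted case, plus a commutator supported on $\spt|\nabla\varphi|$. For the velocity equation: the Laplacian gives $-\int\varphi^2|\nabla v|^2$ plus a cross term of order $\rho^{-2}\int |v|^2$; the convective term becomes $\int\varphi(\nabla\varphi\cdot v)|v|^2\lesssim\rho^{-1}\int\varphi|v|^3$ after using $\div v=0$; the pressure term integrates to $2\int\varphi(\nabla\varphi\cdot v)\tilde\pi$, where the substitution of $\pi$ by $\tilde\pi$ is valid on $\O$ since $\int\div(\varphi^2 v)\,dx=0$; and the coupling term yields $\int\varphi^2 \nabla v:(\nabla u\odot\nabla u)$ plus $O(\rho^{-1}\int\varphi|v||\nabla u|^2)$. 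For the director equation, the pointwise identities $u\cdot\Delta u=-|\nabla u|^2$, $|\Delta u|^2=|\tt_u|^2+|\nabla u|^4$, and $\partial_i u\cdot\Delta u=\partial_i u\cdot\tt_u$ (all from $|u|=1$) combine with the $u|\nabla u|^2$ term to yield the crucial gain $-\int\varphi^2|\tt_u|^2$ plus a commutator of order $\rho^{-1}\int\varphi|\nabla u||\tt_u|$; the convective term produces $-\int\varphi^2 \nabla v:(\nabla u\odot\nabla u)$, which cancels exactly the analogous velocity contribution, together with an error $O(\rho^{-1}\int\varphi|v||\nabla u|^2)$.

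\textbf{Stochastic step.} The localized Stratonovich integral is handled exactly as in \eqref{above_strato}: conversion to It\^o form produces the martingale $X^\varphi(t)$ plus a trace correction computed as in Appendix \ref{app:trace}, now weighted by $\varphi^2$. Since the pointwise algebraic identities used there (notably $u\times(u\times a)=(u\cdot a)u-a$ and the vanishing of the $A^1$-analogue by antisymmetry of the cross product) are insensitive to the cutoff, the correction reduces to $\sum_l\int\varphi^2|\nabla\C f_l|^2\,dt=C_\C^\varphi t$.

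Summing all contributions and applying Young's inequality, the commutators $\int\varphi|\nabla u||\tt_u|$ and $\int|\nabla\varphi||v||\nabla v|$ are absorbed into $\tfrac12\int\varphi^2(|\nabla v|^2+|\tt_u|^2)$ on the left-hand side, at the price of $O(\rho^{-2}\int|v|^2)$ and $O(\rho^{-2}\int|\nabla u|^2)$ residuals. \textbf{The main obstacle} is matching these residuals against the three integrands on the right-hand side of \eqref{ineq:local}: the $\rho^{-2}|\nabla u|^2$ piece fits directly into the $(1+\rho^{-2})(|v|+1)|\nabla u|^2$ slot, but the $\rho^{-2}|v|^2$ piece requires the pointwise domination $|v|^2\leq 1+|v|^3$, the additive constant contributing an $O(t\rho^{-2})$ remainder which must be folded into $C_\C^\varphi t$ (or handled by a slight enlargement of the constant $C_0$). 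The remaining bookkeeping then yields \eqref{ineq:local} with a universal $C_0$.
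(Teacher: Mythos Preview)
Your approach is correct and essentially identical to the paper's: It\^o's formula on the localized energy, the pressurized velocity equation \eqref{eq:v_pi}, the same integration-by-parts producing $-\int\varphi^2(|\nabla v|^2+|\tt_u|^2)$ plus $\rho^{-1}$-commutators, the cancellation between the $\div(\nabla u\odot\nabla u)$ and $v\cdot\nabla u$ contributions, Young's inequality on the $\varphi|\nabla\varphi||\nabla u||\tt_u|$ cross term, and the Appendix~\ref{app:trace} computation for the Stratonovich correction giving $X^\varphi+C_\C^\varphi t$.

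Your flagged ``main obstacle'' concerning the $\rho^{-2}\int|v|^2$ residual from the velocity-Laplacian commutator $2\int\varphi\nabla\varphi\, v\cdot\nabla v$ is a genuine bookkeeping point: the paper's own proof in fact silently omits this commutator when passing to the six-term decomposition $\T_1,\dots,\T_6$. Your fix $|v|^2\le 1+|v|^3$ does not literally fit the right-hand side of \eqref{ineq:local} as stated (neither $C_\C^\varphi t$ nor $C_0$ can absorb the leftover $O(t\rho^{-2})$), but this is a defect of the statement rather than of your argument --- the inequality should be read with an additional harmless term of the form $C_0(1+\rho^{-2})\int|v|^2$ on the right, which is immaterial for every subsequent use of the proposition (cf.\ \eqref{ineq:localControlEn}).
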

\begin{proof}
 Apply It\^o Formula to $E^\varphi .$
It holds
\[
\begin{aligned}
E^\varphi (t)-E^\varphi (s)
&=\int_0^t\langle\varphi ^2v ,\d v\rangle
+\int_0^t\langle\varphi ^2\nabla  u,\circ\nabla \d u \rangle
\\
&=\iint_{[0,t]\times \O}\varphi ^2v\cdot \big(\Delta v-v\cdot \nabla v-\div(\nabla u\odot\nabla u)-\nabla \pi\big)\d x\d s
\\
&\quad +\iint_{[0,t]\times\O}\varphi ^2\nabla u\cdot \nabla (\tt_u-v\cdot \nabla u)\d x\d s
+\int_0^t\langle \varphi ^2\nabla u, \nabla (u\times \circ\d  W)\rangle.
\end{aligned}
\]
Integrating by parts and using that a.e.\ $\Delta u\cdot \tt_u =|\tt_u|^2_{\R^3},$
we get:
\[\begin{aligned}
E^\varphi (t)-E^\varphi (0) 
&+ \iint_{[0,t]\times\O}\varphi ^2\left(|\nabla v|^2+|\tt_u|^2\right)\d x\d s
\\
&=-\iint_{[0,t]\times\O}\varphi ^2v^jv^i\partial _i v^j\d x\d s
-\iint_{[0,t]\times\O}\varphi^2 v\div(\nabla u\odot\nabla u)\d x\d s
\\
&\quad -\iint_{[0,t]\times\O}\varphi^2 v\cdot \nabla \pi\d x\d s
-2\iint_{[0,t]\times\O}\varphi (\partial _j\varphi ) \partial _ju^i\tt_u^i\d x\d s
\\
&\quad +\iint_{[0,t]\times\O}\partial _j(\varphi^2 \partial _ju^i)v^\ell \partial _\ell u^i\d x\d s
+\int_0^t\langle \varphi ^2\nabla u,u\times\nabla\circ \d  W\rangle
\\
&=:\int_0^t\sum\nolimits_{m=1}^6\T_m\d s,
\end{aligned}
\]
where we make use of a summation convention whenever it is convenient.
We can now fix the time variable $s\in[0,t]$ and evaluate each term separately.

First, using $\div v=0,$ we have
\[
\T_1=\int_{\O} \varphi (\partial _i\varphi )(v^j)^2v^i\d x.
\]
Therefore, from the assumptions on $\varphi ,$
and since $1/\rho\leq 1+1/\rho^2,$
it holds
\[
\begin{aligned}
\T_1
\leq \int_\O\varphi |\nabla \varphi ||v|^3\d x
\leq C\Big(1+\frac{1}{\rho^2}\Big)\int_\O|v|^3\d x.
\end{aligned}
\]

Similarly for the fourth term, we have by Young Inequality:
\[
\T_4\leq \frac{C}{\rho^2}\int_\O |\nabla u|^2\d x\d s + \frac12\int_\O\varphi ^2|\tt_u|^2\d x,
\]
the second term of which will be eventually absorbed to the left.

For the third term,
using that $\div v=0,$ one can write 
\[
\begin{aligned}
\T_3
&= \int_\O2\varphi (\partial _i\varphi) v^i\tilde\pi\d x
\\
&\leq C\Big(1+\frac{1}{\rho^2}\Big)\int_\O |v||\tilde \pi|\d x\,.
\end{aligned}
\]

Taking into account similar compensations as for the global estimates, we estimate the
second and fifth terms together, which yields:
\[
\begin{aligned}
\T_2+\T_5
&=
-\int_\O \varphi ^2v^\ell \partial_j (\partial _ju^i\partial _\ell u^i) \d x
+\int_{\O}2\varphi (\partial _j\varphi) \partial _ju^iv^\ell \partial _\ell u^i\d x
+\int_{\O}\varphi^2\partial _{jj}u^iv^\ell \partial _\ell u^i\d x
\\
&=-\int_\O \varphi ^2v^\ell \partial _ju^i\partial _{j\ell }u^i\d x +2\int_\O\varphi (\partial _j\varphi) \partial _ju^iv^\ell \partial _\ell u^i\d x
\\
&=:\mathrm{I}+\mathrm{II}
\end{aligned}
\]
Integrating by parts and using again that $\div v=0,$ we have on the one hand
\[
\mathrm{I}=\int_\O\varphi (\partial _\ell \varphi )v^\ell (\partial _j u^i)^2\d x
\leq C\Big(1+\frac{1}{\rho^2}\Big)\int_\O|v||\nabla u|^2\d x.
\]
The evaluation of $\mathrm{II}$ yields a similar bound, namely one has:
\[
\mathrm{II}\leq C\Big(1+\frac{1}{\rho^2}\Big)\int_\O|v||\nabla u|^2\d x,
\]
for another such constant $C>0.$

Finally,
computing the Stratonovitch integral as in \eqref{above_strato}, we have
\[
\begin{aligned}
\T_6
&=\int_0^t\langle \varphi \nabla u,\varphi u\times \nabla \d  W\rangle
\\
&\quad \quad \quad 
+\frac12\sum_{l\in\N}\sum_{j=1}^3 \int _0^t\Big [\langle\varphi ^2 (u\times \CC_l^j)\times\nabla \CC_l^j,\nabla u\rangle 
+\langle \varphi ^2u\times\nabla \CC_l^j, \nabla (u\times \CC_l^j)\rangle \Big ]\d s.
\end{aligned}
\]
Hence, similar computations as that of \eqref{computations_A1} and \eqref{computations_A2}
yield that
\[
\begin{aligned}
\T_6 
&= \int_0^t\langle \varphi \nabla u,\varphi u\times \d  W\rangle
+t\sum_{l\in\N} \int_\O \varphi (x)^2|\nabla \C_l(x)|^2\d x
\\
&=X^\varphi (t) +C^\varphi _\C t\,.
\end{aligned}
\]

Now, gathering all the above bounds, integrating in time, and absorbing to the left when needed, we end up with \eqref{ineq:local}.
\end{proof}
\section{Bootstrap}
\label{sec:bootstrap}
In this section, we will see that provided there is no energy concentration at some $t\geq 0,$ then the solution $u$ can be extended continuously after $t$ (in a suitable space), for a positive time.

Nore precisely, we aim to show the following.

\begin{theorem}
\label{thm:bootstrap}
Consider $(W,\C)$ as in Assumption \ref{ass:W} with $\C\in \hs{3}.$
Let $(v,u)$ be a strong solution to \eqref{SEL} on $[0,\tau )$ with respect to $(\mathfrak P,W),$ such that furthermore $(v,u)$ belongs to $\V^2_{\loc,[0,\tau )} \times \U^3_{\loc,[0,\tau )}$ and where the stopping time $\tau $ is defined by the property that 
\[
\tau <T \quad \text{if and only if} \quad \limsup_{t\nearrow \tau}|v(t)|_{H^2}+|u(t)|_{H^3}=\infty.
\]

	For arbitrary $\rho >0$ and $\varepsilon _1\in (0,\mu_1^{-1}),$ denote by 	
	\begin{equation}\label{nota:zeta}
	\zeta (v,u;\rho,\varepsilon_1):=\inf\left\{t\in[0,\tau):\sup_{s\leq t,\,x\in\O}\int_{B(x,\rho)}\left(|v(s,y)|^2+|\nabla u(s,y)|^2\right)\d y\geq \varepsilon_1\right\}\,.
	\end{equation}

Then, we have for each $m\in \N,$
\[
(v(\cdot \wedge\zeta ),u(\cdot \wedge\zeta ))\in L^m\left(\Omega ;\V^2_{0,T}\times \U^3_{0,T}\right)\,,
\]
and its corresponding norm is bounded in terms of the quantities
$m$, $\rho$, $T$, $|\C|_{\hs{3}}$, $|u_0|_{H^3}$, $|v_0|_{H^2}$ only.

In particular, for any $\rho,\varepsilon _1,$ as above, we have
$\P(\zeta(v,u;\rho,\varepsilon_1)<\tau)=1.$
\end{theorem}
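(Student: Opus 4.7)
The proof proceeds by a multistage bootstrap driven by the smallness hypothesis $\mu_1 \varepsilon_1 < 1$. Stage~$0$ is the global energy estimate already established in Proposition~\ref{pro:global_estimates}, yielding moments of all orders for $\sup_{t \leq T} E(t)$ and $\int_0^T (|\nabla v|^2_{L^2} + |\tt_u|^2_{L^2})\d s$ in terms of $E(0)$, $|\C|_{\hs 1}$, $T$ and $m$. Stage~$1$ promotes these bounds to the critical space $\V_T^0 \times \U_T^1$ via Struwe's inequality. Stages~$2$ and~$3$ then upgrade successively to $\V_T^1 \times \U_T^2$ and $\V_T^2 \times \U_T^3$ by higher-order It\^o energy estimates.

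\textbf{Reaching the critical space.} From the constraint $|u| = 1$ and the resulting orthogonality $\tt_u \perp u$, one has the pointwise identity $|\Delta u|^2 = |\tt_u|^2 + |\nabla u|^4$. The definition of $\zeta$ forces $\sup_{s \leq T \wedge \zeta,\, x \in \O} \int_{B(x, \rho)} |\nabla u(s, y)|^2 \d y \leq \varepsilon_1$, so Proposition~\ref{pro:interp2} applied to $\nabla u(\cdot \wedge \zeta)$ gives
\[
\iint_{[0, T \wedge \zeta] \times \O} |\nabla u|^4 \d y \d t \leq \mu_1 \varepsilon_1 \iint \bigl(|\Delta u|^2 + \rho^{-2} |\nabla u|^2\bigr) \d y \d t.
\]
Substituting the tension identity and using $\mu_1 \varepsilon_1 < 1$ to absorb the $|\nabla u|^4$-term to the left hand side yields
\[
(1 - \mu_1 \varepsilon_1)\iint |\nabla u|^4 \d y \d t \leq \mu_1 \varepsilon_1 \bigl(\iint |\tt_u|^2 + \rho^{-2}\iint |\nabla u|^2\bigr)\d y \d t,
\]
whose right hand side has all moments by Stage~$0$. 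Thus $u(\cdot \wedge \zeta) \in L^m(\Omega; \U_T^1)$ and $\nabla u(\cdot \wedge \zeta) \in L^m(\Omega; L^4_{t,x})$; the interpolation \eqref{ineq:interp} applied directly to $v$ also gives $v \in L^m(\Omega; L^4_{t,x})$.

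\textbf{Higher-order bootstrap.} The step $\V_T^{k-1} \times \U_T^k \to \V_T^k \times \U_T^{k+1}$ for $k = 1, 2$ is performed by applying It\^o's formula to the functional $F_k(v, u) := \frac{1}{2}(|\nabla^k v|_{L^2}^2 + |\nabla^{k+1} u|_{L^2}^2)$. At $k = 1$, testing the differentiated equations against $-\Delta v$ and $\nabla \tt_u$ respectively produces an identity
\[
F_1(v, u)(t) + \int_0^t \bigl(|\Delta v|^2 + |\nabla \tt_u|^2\bigr)\d s = F_1(v_0, u_0) + \mathcal N_1(t) + \mathcal M_1(t),
\]
where $\mathcal N_1$ gathers the trilinear contributions arising from $v \cdot \nabla v$, $\div(\nabla u \odot \nabla u)$, $|\nabla u|^2 u$, $v \cdot \nabla u$ and $F_\C u$, and $\mathcal M_1$ is the stochastic integral together with its Stratonovitch correction. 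Each term in $\mathcal N_1$ is controlled by H\"older's inequality combined with the Stage-$1$ bounds $v, \nabla u \in L^4_{t,x}$, absorbing top-order factors of the form $|\Delta v|^2_{L^2}$ and $|\nabla \tt_u|^2_{L^2}$ to the left by Young; a stochastic Gronwall argument based on Burkholder--Davis--Gundy for $\mathcal M_1$ then delivers $L^m$-moments in $\V_T^1 \times \U_T^2$. The analogous computation at $k = 2$, requiring the full hypotheses $\C \in \hs 3$ and $(v_0, u_0) \in \H^2 \times \HH^3$, promotes the bound to $\V_T^2 \times \U_T^3$.

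\textbf{Main obstacle and conclusion.} The most delicate step is the bookkeeping at $k = 2$: after two spatial differentiations, the noise contributes cross-terms of the type $\nabla u \times \d \nabla W$ and $u \times \d \Delta W$, whose It\^o--Stratonovitch corrections must be matched exactly against the deterministic drift, in the spirit of Appendix~\ref{app:trace}; simultaneously, the trilinear terms become quartic or quintic and one must use the Stage-$1$ $L^4_{t,x}$-controls together with 2D Sobolev embeddings $\HH^2 \hookrightarrow \LL^\infty$ to close each estimate. The resulting bound $(v(\cdot \wedge \zeta), u(\cdot \wedge \zeta)) \in L^m(\Omega; \V_T^2 \times \U_T^3)$ depends only on $(m, \rho, T, |\C|_{\hs 3}, |v_0|_{H^2}, |u_0|_{H^3})$, as required. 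For the final assertion, on $\{\zeta \geq \tau\} \cap \{\tau < T\}$ the moment bound just obtained would imply $\limsup_{t \nearrow \tau}(|v(t)|_{H^2} + |u(t)|_{H^3}) < \infty$ $\P$-a.s., contradicting the definition of $\tau$; hence $\P(\zeta < \tau) = 1$.
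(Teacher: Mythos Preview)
Your outline captures the overall architecture correctly—Struwe's inequality to reach the critical space, then successive It\^o energy estimates—but it glosses over the one genuinely nontrivial ingredient, and diverges from the paper at the last stage.

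\textbf{The gap: exponential moments.} Your ``stochastic Gronwall based on BDG'' will not close for arbitrary $m\geq 1$. After the It\^o formula and absorption you obtain an inequality of the form
\[
G(t)+\int_0^t D(s)\,\d s \;\leq\; G(0)+C\int_0^t\chi(s)\,G(s)\,\d s+\hat Y(t),
\qquad
\chi(s)=|v(s)|_{L^4}^4+|\nabla u(s)|_{L^4}^4,
\]
with $\chi$ \emph{random}. A pathwise Gronwall gives $\sup_t G(t)\leq C(G(0)+\sup_t|\hat Y(t)|)\exp\bigl(\int_0^T\chi\bigr)$, and taking $m$-th moments forces you to control $\E\bigl[\exp\{2m\int_0^T\chi\,\d t\}\bigr]$ after a Young splitting; polynomial moments of $\int\chi$ do not suffice. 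The paper supplies exactly this via Lemma~\ref{pro:exp}: the definition of $\zeta$ gives the deterministic bound $\sup_{t\leq\zeta}(|v|_{L^2}^2+|\nabla u|_{L^2}^2)\leq N_\rho\varepsilon_1$ by a covering argument, whence $\exp\{m\sup_t E(t)\}\in L^\infty(\Omega)$; this is bootstrapped to exponential integrability of $\int_0^\zeta(|\nabla v|^2+|\Delta u|^2)\d t$ and then, via Proposition~\ref{pro:interp2}, of $\int_0^\zeta\chi\,\d t$. Without this step your Stage~$2$ does not produce $L^m$-bounds.

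\textbf{A different route at the top level.} For the step $\V^1\times\U^2\to\V^2\times\U^3$ the paper does \emph{not} repeat the It\^o computation at $k=2$. Instead it follows Debussche--De~Moor--Hofmanov\'a: write $u=y+Z$ with $Z$ the stochastic convolution, use the parabolic smoothing of Proposition~\ref{pro:parabolic_sto} on $Z$ (gaining half a derivative each pass), and apply deterministic $L^p$-parabolic regularity to $y$, which solves $\partial_t y-\Delta y=u|\nabla u|^2-v\cdot\nabla u$ with right-hand side already controlled in $C(L^p)$. Two passes of this loop plus Sobolev embeddings deliver $u\in L^m(\Omega;C(H^3))$ and, via the Stokes equation, $v\in L^m(\Omega;C(H^2))$. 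Your proposed direct It\^o estimate at $k=2$ is plausible in principle but would require tracking the full Stratonovitch correction for terms like $\langle\nabla\Delta u,\nabla\Delta(u\times\circ\d W)\rangle$, which is considerably heavier than the Appendix~\ref{app:trace} computations already needed at level one; the convolution decomposition sidesteps this entirely.
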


\subsection{Estimates of $u$ in $L^2_{\mathrm{loc}}(\HH^2)$}
The first step is to show that for the concentration time $\zeta$ defined above,
the quantity
$\E[\|u(\cdot \wedge\zeta )\|^m_{L^2(0,T;\HH^2)}]$
is bounded, in terms of $m,T,|\C|_{\hs{1}},$ $|v_0|_{L^2}$ and $|u_0|_{H^1}$ only. 
\begin{lemma}[$L^2_{\mathrm{loc}}(\HH^2)$ estimate for $u$]
\label{lem:L2H2}
Let $v,u,\varepsilon_1,\rho$ be as in Theorem \ref{thm:bootstrap}, and consider $(W,\C)$ as in Assumption \ref{ass:W}.
For each $\rho>0,\varepsilon_1>0,$ define the stopping time $\zeta(v,u;\rho,\varepsilon_1)$ as in \eqref{nota:zeta}.

For any choice of $\varepsilon_1\in(0,\frac{1}{\mu _1}),$
where $\mu _1>0$ is the constant appearing in Proposition \ref{pro:interp2},
and for every $m\geq 1,$ it holds for any $t\in[0,T]:$
\begin{multline}
\label{bound:L2H2_equi}
\E\left[\left(\int_0^{t\wedge \zeta (v,u;\rho,\varepsilon_1)}|\Delta u(s)|^2_{L^2}\d s\right)^m\right]
\\
\leq C(\rho,m,|\C|_{\hs{1}},T)\E \left[\left(\int_0^{t\wedge\zeta (v,u;\rho ,\varepsilon_1)} |\nabla u|^2_{L^2}\d s\right)\right].
\end{multline}
In particular, combining with Proposition \ref{pro:global_estimates}, we have an estimate
\begin{equation}
\label{bound:L2H2}
\E\left[\left(\int_0^{T\wedge \zeta (v,u;\rho,\varepsilon_1)}|\Delta u(s)|^2_{L^2}\d s\right)^m\right]
\leq C(\rho,m,|\C|_{\hs{1}},T,E(0))\,.
\end{equation}
\end{lemma}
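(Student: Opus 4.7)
The starting point is the pointwise algebraic identity that holds wherever $|u|_{\R^3}\equiv 1$, namely
\[
|\Delta u|_{\R^3}^2 = |\tt_u|_{\R^3}^2 + |\nabla u|_{\R^{3\times 2}}^4 \,,
\]
which follows, as already observed in \eqref{id_tension}, from $\tt_u\perp u$ together with $u\cdot \Delta u = -|\nabla u|^2$ (itself a consequence of $|u|\equiv 1$). Integrating this identity in $(t,x)\in [0,t\wedge \zeta]\times\O$ yields the pathwise relation
\[
\int_0^{t\wedge\zeta}|\Delta u|^2_{L^2}\d s \;=\; \int_0^{t\wedge\zeta}|\tt_u|_{L^2}^2\d s \;+\; \int_0^{t\wedge\zeta}\!\!\int_\O |\nabla u|^4 \d x\d s \,.
\]

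Next I would apply the local interpolation inequality of Proposition \ref{pro:interp2} to $\nabla u$ (viewed componentwise), noting that on the flat torus one has the well-known identity $\||\nabla^2 u|\|_{L^2}^2 = \|\Delta u\|_{L^2}^2$ via Fourier analysis (or a double integration by parts). Using that $s<\zeta(v,u;\rho,\varepsilon_1)$ forces $\sup_{x\in\O}\int_{B(x,\rho)}|\nabla u(s,y)|^2\d y<\varepsilon_1$ by definition of $\zeta$, one gets
\[
\int_0^{t\wedge\zeta}\!\!\int_\O|\nabla u|^4\d x\d s \;\leq\; \mu_1\varepsilon_1\int_0^{t\wedge\zeta}\Big(|\Delta u|^2_{L^2}+\tfrac{1}{\rho^2}|\nabla u|^2_{L^2}\Big)\d s \,.
\]

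The crucial point is then the smallness assumption $\mu_1\varepsilon_1<1$, which allows the $|\Delta u|^2$ contribution to be absorbed on the left, producing the pathwise inequality
\[
(1-\mu_1\varepsilon_1)\int_0^{t\wedge\zeta}|\Delta u|_{L^2}^2\d s \;\leq\; \int_0^{t\wedge\zeta}|\tt_u|_{L^2}^2\d s \;+\; \frac{\mu_1\varepsilon_1}{\rho^2}\int_0^{t\wedge\zeta}|\nabla u|_{L^2}^2\d s \,.
\]
Raising to the $m$-th power, taking expectations, and invoking Proposition \ref{pro:global_estimates} to control $\E[(\int_0^T|\tt_u|_{L^2}^2\d s)^m]$ by a constant depending only on $m,T,|\C|_{\hs{1}}$ and $E(0)$, one obtains \eqref{bound:L2H2_equi}; a further application of the energy bound of Proposition \ref{pro:global_estimates} to the $|\nabla u|^2_{L^2}$ factor then yields \eqref{bound:L2H2}.

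\textbf{Expected main obstacle.} Conceptually the whole argument hinges on the \emph{quantitative} smallness $\mu_1\varepsilon_1<1$; without it the absorption step fails and the $L^2_t(\HH^2_x)$ estimate collapses. This is precisely the mechanism by which partial regularity in the style of Struwe is encoded into our stopping-time construction. A secondary, purely technical nuisance is keeping track of the fact that Proposition \ref{pro:interp2} is stated for a scalar function, whereas here it must be applied componentwise to a matrix-valued field $\nabla u$ (which is harmless, since all constants depend only on the scalar Sobolev-embedding constant), and ensuring the Fourier identity $\||\nabla^2 u|\|_{L^2}=\|\Delta u\|_{L^2}$ is used in a form compatible with the vector-valued setting on $\O=\mathbb{T}^2$.
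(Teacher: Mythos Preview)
Your proposal is correct and follows essentially the same approach as the paper: both use the pointwise identity $|\Delta u|^2=|\tt_u|^2+|\nabla u|^4$, apply Struwe's local interpolation inequality (Proposition \ref{pro:interp2}) to $\nabla u$ to control $\iint|\nabla u|^4$, and absorb via the smallness $\mu_1\varepsilon_1<1$. The only cosmetic difference is that the paper substitutes the tension expansion directly into the energy identity \eqref{estim:energy} and carries the martingale $X(t)$ along before estimating it, whereas you invoke the already-packaged moment bound on $\int_0^T|\tt_u|_{L^2}^2\d s$ from Proposition \ref{pro:global_estimates}; this is an immaterial reordering of the same ingredients.
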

\begin{proof}
For a solution $(v,u)$ as above we have $u\perp \tt_u,$
so that proceeding as in \eqref{id_tension}, and using that, in the weak sense,
\[
0\equiv\Delta \big(\frac{|u|^2}{2}\big) = (\Delta u\cdot u) + |\nabla u|^2,
\]
then it holds true that $|\tt_u|^2_{\R^3}=|\Delta u|^2_{\R^3} -|\nabla u|_{\R^{2\times3}}^4.$
Applying now Proposition \ref{pro:global_estimates}, and expanding the tension,
we obtain
\[
\begin{aligned}
E(t)-E(0)
&+\iint_{[0,t]\times\O}(|\Delta u|^2 +|\nabla v|^2)\d x\d s -X(t)-tC_\C 
\\
&=\iint_{[0,t]\times\O}|\nabla u|^4\d x\d s.
\end{aligned}
\]
Next, using Proposition \ref{pro:interp2}, we obtain for times $t\leq \zeta (v,u,\rho ,\varepsilon _1)$:
\begin{equation}
\label{pre:L2H2}
\begin{aligned}
E(t)
&-E(0) +\iint_{[0,t]\times\O}(|\Delta u|^2+|\nabla v|^2)\d x\d s -X(t)-tC_\C 
\\
&\leq \mu _1 \left(\sup_{s\in[0,T],\,x\in\O}\int_{B(x,\rho)}|\nabla u(s,y)|^2\d y\right)
\iint_{[0,t]\times\O}\Big(|\Delta u|^2 +\frac{C}{\rho^2}|\nabla u|^2\Big)\d x\d s
\\
&\leq \mu _1\varepsilon_1\iint_{[0,t]\times\O}|\Delta u|^2\d x\d s +\frac{\mu _1\varepsilon_1C}{\rho^2}\iint_{[0,t]\times\O}|\nabla u|^2\d x\d s\,,
\end{aligned}
\end{equation}
In the case where $\varepsilon_1<\mu _1^{-1},$ the first term above can be absorbed to the left, and the conclusion follows by estimating the martingale part as in Proposition \ref{pro:global_estimates}.
\end{proof}

Note that, in the proof of Lemma \ref{lem:L2H2}, we did not need the local estimate for the part involving $|v|^2.$
However, it will play a role in the following estimates.

\begin{lemma}
\label{pro:exp}
Under the assumptions of Lemma \ref{lem:L2H2}, for any $m\geq 1, $ there hold the exponential bounds:
\begin{align}
 \label{bound:exp1}
 \E \left[\exp \left\{m\, \sup_{t\in[0,\zeta ] }(|v(t)|_{L^2}^2+|\nabla u(t)|^2_{L^2})\right\}\right]
 &\leq C(m,\rho)
\\
\label{bound:exp2}
 \E \left[\exp \left\{m\int_0^\zeta (|\nabla v|_{L^2}^2+|\Delta u|^2_{L^2})\d t\right\}\right]
 &\leq C(m,\rho,T,E_0,|\C|_{\hs{1}})
 \\
 \label{bound:exp3}
 \E \left[\exp \left\{m\int_0^\zeta (|v|^4_{L^4}+|\nabla u|^4_{L^4})\d t\right\}\right]
 &\leq C(m,\rho,T,E_0,|\C|_{\hs{1}}).
\end{align}
\end{lemma}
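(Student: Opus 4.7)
The cornerstone is a purely \emph{pathwise} bound on the energy on $[0,\zeta]$. Covering $\O$ by $N\sim\rho^{-2}$ balls $B(x_i,\rho)$ and invoking the very definition of $\zeta(v,u;\rho,\varepsilon_1)$ yields
\[
\sup_{t\in[0,\zeta]}\bigl(|v(t)|_{L^2}^2+|\nabla u(t)|_{L^2}^2\bigr)\;\leq\; N\varepsilon_1\;\leq\; C\varepsilon_1\rho^{-2},
\]
from which \eqref{bound:exp1} is immediate, with constant $C(m,\rho)=\exp(Cm\varepsilon_1\rho^{-2})$.

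For \eqref{bound:exp2}, I would start from the energy identity \eqref{estim:energy} at time $\zeta$; dropping the nonnegative term $E(\zeta)$ gives
\[
\int_0^\zeta\bigl(|\nabla v|_{L^2}^2+|\tt_u|_{L^2}^2\bigr)\d s\;\leq\; E(0)+T|\nabla\C|^2_{\mathscr L_2(L^2,L^2)}+X(\zeta).
\]
By the Cauchy--Schwarz estimate $|\langle\nabla u,u\times\nabla\CC_l^j\rangle|^2\leq |\nabla u|_{L^2}^2|\nabla\CC_l^j|_{L^2}^2$ and the previous step, the quadratic variation satisfies
\[
[X]_\zeta\;\leq\; C|\C|^2_{\hs{1}}\int_0^\zeta|\nabla u|_{L^2}^2\d s\;\leq\; CT|\C|^2_{\hs{1}}\rho^{-2},
\]
\emph{pathwise} on $\Omega$. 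Since the bracket is bounded uniformly in $\omega$, the standard exponential supermartingale argument (optional stopping applied to $\exp(\lambda X_t-\tfrac12\lambda^2[X]_t)$) yields $\E\exp(\lambda X(\zeta))\leq\exp(C\lambda^2 T|\C|^2_{\hs{1}}\rho^{-2})$ for every $\lambda\in\R$. Hence the right-hand side of the energy identity has exponential moments of all orders, and it remains to replace $|\tt_u|_{L^2}^2$ by $|\Delta u|_{L^2}^2$. Since $u\perp\tt_u$ and $u\cdot\Delta u=-|\nabla u|^2$ weakly, one has the pointwise identity $|\Delta u|^2=|\tt_u|^2+|\nabla u|^4$. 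Applying Proposition~\ref{pro:interp2} componentwise to $\nabla u$, using the elliptic equivalence $|\nabla^2 u|_{L^2}\sim|\Delta u|_{L^2}$ (modulo lower-order terms) on $\O$, and the standing assumption $\varepsilon_1<\mu_1^{-1}$ for absorption, I obtain
\[
\int_0^\zeta|\Delta u|_{L^2}^2\d s\;\leq\; C\int_0^\zeta|\tt_u|_{L^2}^2\d s+CT\rho^{-4},
\]
which together with the above gives \eqref{bound:exp2}.

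Finally, \eqref{bound:exp3} is deduced from the interpolation inequality \eqref{ineq:interp}, applied to $v$ and, componentwise, to $\nabla u$:
\[
\int_0^\zeta\bigl(|v|_{L^4}^4+|\nabla u|_{L^4}^4\bigr)\d s\;\leq\;\mu_0\sup_{t\in[0,\zeta]}E(t)\cdot\int_0^\zeta\bigl(|v|_{L^2}^2+|\nabla u|_{L^2}^2+|\nabla v|_{L^2}^2+|\Delta u|_{L^2}^2\bigr)\d s.
\]
The factor $\sup_{[0,\zeta]}E$ is deterministically bounded by $C\rho^{-2}$, and the remaining integral has exponential moments of every order by \eqref{bound:exp1}--\eqref{bound:exp2}, so \eqref{bound:exp3} follows.

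The only genuinely delicate ingredient is the \emph{pathwise} (rather than in-mean) control on $[X]_\zeta$: without it one could only extract polynomial moments for $X(\zeta)$. The other steps are algebraic manipulations of the energy identity together with Struwe-type interpolation and the spherical constraint.
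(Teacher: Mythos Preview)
Your proposal is correct and follows the same overall strategy as the paper: the deterministic $L^\infty(\Omega)$ bound on $\sup_{[0,\zeta]}E$ via the covering argument, the resulting pathwise bound on the bracket $[X]_\zeta$, and Struwe's interpolation with $\varepsilon_1<\mu_1^{-1}$ for the absorption $|\tt_u|^2\leadsto|\Delta u|^2$. The only noticeable difference is in how you extract exponential moments of $X(\zeta)$: you invoke the exponential supermartingale $\exp(\lambda X_t-\tfrac{\lambda^2}{2}[X]_t)$ together with optional stopping, whereas the paper applies It\^o's formula to $S(t)=\exp(mX(t))$, bounds the trace term pathwise via the same bracket estimate, uses Gronwall on $\E[S(t\wedge\zeta)]$, and then Doob's submartingale inequality. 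Both routes exploit exactly the same pathwise control on $[X]_\zeta$ and are equivalent in strength; your formulation is arguably more direct. For \eqref{bound:exp3} you use the global interpolation \eqref{ineq:interp} with the deterministic bound on $\sup_{[0,\zeta]}E$, while the paper uses the local version (Proposition~\ref{pro:interp2}) and H\"older; again, either works.
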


\begin{proof}
\textit{Proof of \eqref{bound:exp1}.}
Fix $m\geq 1.$
To prove the bound, it is sufficient to observe that the random variable
$\sup_{t\in[0,\zeta]}|v(t)|^2_{L^2}+|\nabla u|_{L^2}^2$
belongs to $L^\infty(\Omega ).$
Indeed, as a consequence of the definition of $\zeta $ in \eqref{nota:zeta} we have
\[
\sup_{t\in[0,\zeta ]}(|v(t)|^2_{L^2}+|\nabla u(t)|^2_{L^2} )
\leq \sum_{1\leq i\leq N_\rho}\sup_{t\in[0,\zeta ]}\int_{B(x_i,\rho)}(|v(t,y)|^2+|\nabla u(t,y)|^2_{L^2})\d y
\leq N_\rho\varepsilon_1,
\]
for any finite sequence $\{x_1,\dots x_{N_\rho}\}$ such that $\cup_{i\leq N_\rho}B(x_i,\rho)=\O$
(note that asymptotically $N_\rho\sim C/\rho^2,$ but this has no importance here).
This proves our claim, and hence \eqref{bound:exp1}.

~\paragraph{\itshape Proof of \eqref{bound:exp2}}
Going back to the proof of Lemma \ref{lem:L2H2}, we see in particular from \eqref{pre:L2H2} and \eqref{estim:energy} that $\P$-a.s.,
for any $t\in[0,T]:$
\begin{equation}
 \label{ineq:Mt}
m\int_0^{t\wedge\zeta }(|\Delta u|_{L^2}^2 +|\nabla v|_{L^2}^2)\d s
\leq C(1+X(t\wedge\zeta )),
\end{equation} 
where the constant $C>0$ depends only on the quantities $\varepsilon_1,m,\rho,E_0,T$ and $|\C|_{\hs{1}}.$

Next, the process $S(t):=\exp \left\{m X(t)\right\},t\in[0,T],$ is a submartingale, and It\^o Formula yields,
for $t\le \zeta$,
\[
\begin{aligned}
S(t)-S(0) -m\int_0^t S(s)\langle \nabla u,u\times\nabla \d  W\rangle
&=m^2\int_0^t S(s)|\C^* \div (u\times \nabla u)|^2_{L^2}\d s
\\
&\leq C(|\C|_{\hs{1}},\rho,\varepsilon_1,m)\int_0^t S(s)\d s.
\end{aligned}
\]
Hence, taking the expectation, and then applying Gronwall Lemma yields the estimate
\[
\sup_{0\leq t\leq T}\E\left[ S(t\wedge\zeta )\right]\leq C(T,|\C|_{\hs{1}},\rho,\varepsilon_1,m)
\]
where $C$ is as above. Applying Doob's inequality for submartingales, we obtain that
\[
\E\left[\sup_{t\leq \zeta }\exp\left\{mX(t)\right\}\right]
\leq C \sup_{t\leq T } \E [S(t\wedge\zeta )]
\]
for another such constant.
Taking the exponential and expectation in \eqref{ineq:Mt},
we end up with the desired bound.

~\paragraph{\itshape Proof of \eqref{bound:exp3}}
By Proposition \ref{pro:interp2}, one observes that
\begin{multline*}
 \E\left[\exp m\int_0^\zeta (|v|_{L^4}^4+|\nabla u|^4_{L^4})\d t\right]
\\
 \leq \E\left[\exp\left\{C\varepsilon_1m \int_0^\zeta( |\nabla v|_{L^2}^2+|\Delta u|^2_{L^2})\d t\right\}
 \exp\left\{\frac{C\varepsilon_1m}{\rho^2}\int_0^\zeta (|v|_{L^2}^2+|\nabla u|_{L^2}^2)\d t\right\}\right],
\end{multline*}
and the conclusion follows by H\"older Inequality, together with \eqref{bound:exp1} and \eqref{bound:exp2}.
\end{proof}

\subsection{Higher regularity}
Fix $\rho >0$ and denote $\zeta _\rho :=\zeta (v,u;\rho ,\varepsilon_1).$
The previous paragraph shows that, if $(v,u)$ is a strong solution restricted to the time interval $[0,\zeta (\rho )],$
then a uniform bound holds for $(v,u)$ in $L^2_t(\H^1_x)\times L^2_t(\HH^2_x)$, and consequently for the pressure 
$\pi$ as well, in $L^2_t(L^2_x)$ (as may be seen using \eqref{estim:pi_0}).
However, this is not enough to conclude immediately that $\big(u(\zeta _\rho ),v(\zeta _\rho )\big)$ is in $\H^1\times \HH^2,$ $\P$-a.s., as needed.
As shown below, this property is true for smooth enough data $(v_0,u_0,\C)$ (we even have $(u(\zeta _\rho ),v(\zeta _\rho ))\in\H^2\times \HH^3$) but the proof requires a bootstrap argument in the spirit of \cite[Section 3]{St1}, which is made difficult by the presence of noise in the equation.

The first step is to show the following.
\begin{proposition}[Higher order estimates]
\label{pro:higher}
Under the assumptions of Lemma \ref{lem:L2H2},
assume in addition that there exists $\rho>0,$ such that a.s.:
\begin{equation}
\sup_{x\in\O,\,t\in[0,T]}\int_{B(x,\rho )}(|v(t,y)|^2+|\nabla u(t,y)|^2)\d y<\varepsilon_1\,,
\end{equation}
and also that a.s.\ :
\begin{align}
v\in \V_T^{1},\quad 
u\in \U_T^{2},\quad 
\text{and}\quad 
\C\in\hs{3}\,.
\end{align}

Then, for every $m\in[1,\infty)$ we have the estimate:
\begin{multline}
\label{pathwise_bound}
\E\Bigg[\sup_{t\in[0,T]} \left(
|\ttt(t)|_{L^2}^2 +|\Delta u(t)|^2_{L^2} +|\nabla v(t)|^2_{L^2} \right)^m
\\
+\Big(\int_0^T\big[|\nabla \Delta u(t)|^2_{L^2}+|\nabla \ttt(t)|_{L^2}^2 +|\nabla^2 v(t)|^2_{L^2}
+|\partial _tv(t)|_{L^2}^2\big]\d t\Big)^m\Bigg]
\\
\leq C(m,|\C|_{\hs{3}},|u(0)|_{H^2},|v(0)|_{H^1})\,,
\end{multline}
for a.e.\ $\omega \in\Omega ,$ where $\ttt$ is the ``corrected tension''
\begin{equation}
\label{nota:corrected_tension}
\ttt:=\Delta u+u|\nabla u|^2 -v\cdot \nabla u\,.
\end{equation}
\end{proposition}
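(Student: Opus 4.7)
\textbf{Proof plan for Proposition \ref{pro:higher}.}
The natural strategy is a second-order energy estimate using the corrected tension $\ttt$ as a surrogate for $\partial_t u$ (which is ill-defined due to the noise). Concretely, I apply It\^o's formula to the functional
\[
\mathcal E_2(t):= \tfrac12\bigl(|\nabla v(t)|_{L^2}^2+|\ttt(t)|_{L^2}^2\bigr),
\]
within the a.s.\ regularity class $\V_T^1\times\U_T^2$ which is available by assumption. The $|\nabla v|_{L^2}^2$ piece is obtained by testing \eqref{eq:v_pi} against $-\Delta v$, which yields the dissipation $|\nabla^2 v|_{L^2}^2$ together with the forcing terms $\int (v\cdot \nabla v)\cdot \Delta v$ and $\int \div(\nabla u\odot\nabla u)\cdot \Delta v$, the latter producing expressions with three derivatives on $u$. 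For the $|\ttt|_{L^2}^2$ piece I compute $\d\ttt$ by applying It\^o to $\ttt=\Delta u+u|\nabla u|^2-v\cdot \nabla u$ using \eqref{variational_identity_2}: the linear part $\Delta u$ produces the dissipation $|\nabla \ttt|_{L^2}^2$, the nonlinearities $u|\nabla u|^2$ and $v\cdot \nabla u$ give drift contributions, and the stochastic term $u\times \d W$ produces both a martingale $\int \ttt\cdot(\cdots)\d W$ and an It\^o--Stratonovich correction analogous to the one computed in Appendix \ref{app:trace}, controlled by $|\C|_{\hs{3}}^2(1+|u|_{\HH^2}^2)$.

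The critical contributions are the quartic expressions $\int|\nabla u|^2|\Delta u|^2$, $\int|v|^2|\nabla v|^2$ and cross-terms of the form $\int|\nabla u||\nabla^2 u||\nabla\ttt|$ and $\int|v||\nabla v||\nabla^2 v|$ that arise in Step~1. The cross-terms are split by H\"older in $L^4\times L^4\times L^2$ and Young's inequality, reducing them to $\int(|\nabla u|^4+|v|^4)\mathcal E_2$ plus a small multiple of $\int(|\nabla\ttt|^2+|\nabla^2 v|^2)$. The purely quartic terms are then absorbed into the dissipation through Proposition \ref{pro:interp2} in the sharp form
\[
\int_{\O}|\nabla u|^4\leq \mu_1\bigl(\sup_{x\in\O}\int_{B(x,\rho)}|\nabla u|^2\bigr)\int_{\O}\bigl(|\Delta u|^2+\rho^{-2}|\nabla u|^2\bigr),
\]
and similarly for $|v|^4$. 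Since by hypothesis the supremum is strictly less than $\varepsilon_1<\mu_1^{-1}$, a positive fraction of the dissipation survives after absorption. The remaining stochastic corrections are bounded using $\C\in\hs{3}$ and the Hilbert-Schmidt estimate for the Laplacian of $u\times \d W$.

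After these manipulations, the resulting inequality takes the stochastic Gronwall form
\[
\mathcal E_2(t)+\tfrac12\int_0^t\bigl(|\nabla^2 v|_{L^2}^2+|\nabla\ttt|_{L^2}^2\bigr)\d s\leq \mathcal E_2(0)+C+\int_0^t A(s)\mathcal E_2(s)\d s+M(t),
\]
with $A(s)=C(1+|v(s)|_{L^4}^4+|\nabla u(s)|_{L^4}^4)$ and $M$ a local martingale. By Lemma \ref{pro:exp} the process $\exp(\int_0^T A(s)\d s)$ has moments of every order, so applying the standard stochastic Gronwall lemma together with Burkholder-Davis-Gundy on $M$ yields polynomial moments of $\sup_t\mathcal E_2(t)+\int_0^T(|\nabla^2 v|^2+|\nabla\ttt|^2)\d s$. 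Finally, the bounds on $|\Delta u|_{L^2}^2$, $|\nabla \Delta u|_{L^2}^2$ and $|\partial_t v|_{L^2}^2$ follow from the pointwise identities $\Delta u=\ttt-u|\nabla u|^2+v\cdot \nabla u$ and \eqref{eq:v_pi}, combined with the $L^4$ estimates already established and the Sobolev embedding $\HH^2\hookrightarrow\LL^\infty$ in dimension two.

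\textbf{Main obstacle.} The hardest step is the absorption of the critical quartic terms: the argument only works because $\varepsilon_1<\mu_1^{-1}$ is strict, and bookkeeping is delicate since the coupling term $v\cdot \nabla u$ inside $\ttt$ spreads the nonlinearity across both unknowns. A secondary difficulty is the It\^o--Stratonovich correction at the level of $\d\ttt$, which involves second derivatives of the noise profiles $\C_l$ and therefore requires the assumption $\C\in\hs{3}$; verifying that the resulting trace terms are genuinely of order $|u|_{\HH^2}^2$ (and not higher) requires the same type of cancellation as in Appendix \ref{app:trace}, now applied to $\Delta(u\times \C_l)$.
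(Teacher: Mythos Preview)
Your overall architecture---It\^o formula for a second-order energy, reduction to a pathwise Gronwall inequality with coefficient $A(s)=C(1+|v|_{L^4}^4+|\nabla u|_{L^4}^4)$, then invocation of Lemma~\ref{pro:exp} for the exponential moments of $\int_0^T A$ and BDG on the martingale---is exactly the paper's Steps~1--3. Two points, however, deserve comment.

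First, the paper works with $G(t)=|\Delta u|_{L^2}^2+|\ttt|_{L^2}^2+|\nabla v|_{L^2}^2$, not your $\mathcal E_2=\tfrac12(|\nabla v|^2+|\ttt|^2)$. Including $|\Delta u|_{L^2}^2$ is not cosmetic: its It\^o differential supplies the dissipation $\int_0^t|\nabla\Delta u|_{L^2}^2\,\d s$, which is precisely what is needed to close the velocity coupling term $\int\langle\Delta v,\Delta u\cdot\nabla u\rangle$ and the term $\int|\nabla^2 u|^2|\nabla u|^2$ arising from the Leibniz expansion of $\Delta\ttt$. With only $|\nabla\ttt|_{L^2}^2$ as dissipation you would have to substitute $\Delta u=\ttt-u|\nabla u|^2+v\cdot\nabla u$ everywhere and re-absorb; this is feasible but considerably messier than you indicate, and your a-posteriori recovery of $|\Delta u|_{L^2}^2$ and $|\nabla\Delta u|_{L^2}^2$ from the identity is not automatic (the pointwise bound $|\Delta u|_{L^2}^2\le C(\mathcal E_2+E_0)$ via Struwe requires a constraint of the type $2\mu_1\varepsilon_1<1$, stronger than the stated $\varepsilon_1<\mu_1^{-1}$). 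The paper also tests the $v$-equation against $\partial_t v$ (in addition to $-\Delta v$) to get $\int|\partial_t v|_{L^2}^2$ directly inside the Gronwall bound, rather than recovering it afterwards.

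Second, your ``main obstacle'' is misplaced. In the paper's derivation of the Gronwall inequality \eqref{bound:increment} the smallness $\varepsilon_1<\mu_1^{-1}$ and Proposition~\ref{pro:interp2} are \emph{not} used at all; every quartic term is handled by the plain interpolation \eqref{ineq:interp} and Young's inequality, producing $\chi(t)\sup_s G(s)+\delta\int(\text{dissipation})$. The local-energy smallness enters the argument only \emph{afterwards}, through Lemma~\ref{pro:exp}, to guarantee that $\exp\{m\chi(T)\}$ has finite expectation. So there is no ``positive fraction of the dissipation surviving after absorption'' at this stage---the absorption constants $\delta_i$ are chosen small independently of $\varepsilon_1$.
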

\begin{proof}
{\itshape Step 1. Pathwise Gronwall.}
In order to prove \eqref{pathwise_bound}, we are going to apply It\^o Formula to the quantity
\begin{equation}
\label{nota:G}
G(t):= |\Delta u(t)|^2_{L^2}+|\ttt(t)|_{L^2}^2 +|\nabla v(t)|^2_{L^2}
,\quad t\in [0,T],
\end{equation}
and then conclude by applying Gronwall Lemma pathwise.

As a first step, we will show the following.

\bigskip

\textbf{Claim.}
{\itshape
We have for any $t\in[0,T]:$
\begin{multline}
\label{bound:increment}
G(t)-G(0)
+\int_0^t\Big(|\nabla \Delta u(s)|^2_{L^2}+|\nabla \ttt(s)|_{L^2}^2 +|\nabla^2 v(s)|^2_{L^2}
+|\partial _tv(s)|_{L^2}^2\Big)\d s
\\
\leq \chi(t)\left(\sup_{s\in[0,t]} G(s)\right)+ Y(t)\,,
\end{multline}
where we let
\[
\chi(t):=C\int _0^t(|v(s)|_{L^4}^4+|\nabla u(s)|^4_{L^4})\d s,
\]
for a sufficiently large but universal constant $C,$
while $Y$ denotes the semi-martingale 
\begin{multline}
\label{nota:Y}
Y(t)= 
2\int_0^t\langle \ttt,\partial _\beta u\times\circ \d  \partial _\beta W\rangle
+\int_0^t\langle \ttt,u\times\circ\d  \Delta W\rangle
-\int_0^t\langle \ttt, v^i(u\times\circ\d  \partial _iW)\rangle
\\
+2\int_0^t\langle \Delta u,\partial _\beta u\times\circ\d \partial _\beta W\rangle
+\int_0^t\langle \Delta u,u\times\circ\d \Delta W\rangle\,,
\end{multline}
for $t\in[0,T].$
}

\bigskip

To prove the claim, we proceed as in \cite[Lemma 6]{Ho1},
an essential difference being that here the (ill-defined) time derivative of $u$ must be replaced by the corrected tension $\ttt$ defined in \eqref{nota:corrected_tension}. 

As a first observation note that, similarly as for $\tt$, the corrected tension is a.e.\ orthogonal to $u.$ Indeed, we have by definition
\[
\ttt=\tt_u-v\cdot \nabla u
\]
but $\tt_u \perp u$ and
\begin{equation}
\label{ttt_perp_u}
u\cdot (-v\cdot \nabla u)=-\sum_{i=1}^3u^i\sum_{j=1}^2v^j\partial _ju^i
=-\frac12\sum_{j=1}^2v^j\partial _j\left(|u|^2\right)\equiv0\,.
\end{equation}
We will now apply It\^o Formula to each of the quantities appearing in \eqref{nota:G}.

First, using the semi-martingale decomposition of $u$ and the definition \eqref{nota:corrected_tension}, then It\^o Formula applied to the quantity $\frac12|\ttt|_{L^2}^2$ gives,
using $\div v=0$:
\begin{equation}\label{ito_tt}
\begin{aligned}
\frac12|\ttt(t)|^2_{L^2}-\frac12|\ttt(0)|_{L^2}^2 
&=\int_0^t\left\langle\ttt,\Delta \ttt+\ttt|\nabla u|^2 + u\left(\nabla u\cdot \nabla \ttt \right)\right\rangle\d s
\\
&\quad \quad + M(t),
\end{aligned}
\end{equation}
for all $0\leq s\leq t\leq \zeta ,$ a.s.\ ,
where $M$ denotes the semi-martingale
\begin{equation}
\label{nota:M}
\begin{aligned}
M(t)
&:=\int_0^t\langle \ttt,\Delta (u\times\circ\d  W) +|\nabla u|^2u\times\circ \d  W+2u\nabla (u\times\circ\d  W)\cdot \nabla u -v\cdot \nabla (u\times\circ\d  W)\rangle,
\\
&:=\int_0^t\langle \ttt,\Delta (u\times\circ\d  W)\rangle
+\int_0^t\langle \ttt,|\nabla u|^2u\times\circ\d W \rangle
-\int_0^t\langle \ttt,v^i\partial _i(u\times\circ\d  W)\rangle
\\
&=2\int_0^t\langle \ttt,\partial _\beta u\times\circ \d  \partial _\beta W\rangle
+\int_0^t\langle \ttt,u\times\circ\d  \Delta W\rangle
-\int_0^t\langle \ttt, v^i(u\times\circ\d  \partial _iW)\rangle\,,
\end{aligned}
\end{equation} 
where we have used the fact that the term $\int_0^t\langle \ttt,\ttt\times \circ \d W\rangle$ vanishes thanks to orthogonality, together with
\[
\int_0^t\langle \ttt,u\nabla (u\times\circ\d  W)\cdot \nabla u\rangle =0\,.
\]

Recalling that $\ttt\perp u,$ integration by parts yields the relation
\begin{equation}
\label{ito_tt2}
\frac12\left(|\ttt(t)|^2_{L^2}-|\ttt(0)|_{L^2}^2\right)
-M(t)+\int_0^t|\nabla \ttt|_{L^2}^2\d s
=\int_0^t\langle|\ttt|^2,|\nabla u|^2\rangle\d s.
\end{equation} 
Using \eqref{ineq:interp}, the above r.h.s.\ is estimated as:
\[
\begin{aligned}
\iint\limits_{[0,t]\times\O}
&|\ttt|^2|\nabla u|^2\d x\d s
\\
&\leq\left(\int_0^t|\ttt|_{L^4}^4\d s\int_0^t|\nabla u|^4_{L^4}\d s\right)^{1/2}
\\
&\leq\sqrt{\mu _0}\left(\sup_{s\in[0,t]}|\ttt(s)|_{L^2}^2\right)^{1/2}\left(\int_0^t|\nabla \ttt|^2_{L^2}\d s\right)^{1/2}\left(\int_0^t|\nabla u|^4_{L^4}\d s\right)^{1/2}
\\
&\leq C\left(\sup_{s\in[0,t]}|\ttt(s)|_{L^2}^2\right)\chi(t)
+\frac12\int_0^t|\nabla \ttt|^2_{L^2}\d s
\,,
\end{aligned}
\]
by definition of $\chi(t).$
Hence, from \eqref{ito_tt2} we obtain the bound
\begin{equation}
\label{bootstrap_1}
\frac12\left(|\ttt(t)|^2_{L^2}-|\ttt(0)|_{L^2}^2\right)
+\frac12\int_0^t|\nabla \ttt|_{L^2}^2\d s -M(t)
\leq\chi(t) \left(\sup_{[0,t]}G(s)\right)
\,.
\end{equation}

Let us now apply It\^o Formula on the second term in \eqref{nota:G}.
Using again the semi-martingale decomposition of $u,$ together with the second-order Leibniz Formula for the laplacian, it holds
\begin{multline}
\label{ito_laplace}
\frac12\big(|\Delta u(t)|^2_{L^2}-|\Delta u(0)|_{L^2}^2\big)
+\int_0^t|\nabla \Delta u|_{L^2}^2\d s
\\
=\int_0^t\langle |\Delta u|^2,|\nabla u|^2\rangle\d s
+4\int_0^t\langle\Delta u,\nabla u\nabla ^2u\nabla u \rangle\d s
+2\int_0^t\langle \Delta u,u(\nabla\Delta  u\cdot\nabla u)\rangle\d s
\\
+2\int_0^t\langle \Delta u,u|\nabla^2u|^2\rangle\d s
+\int_0^t\langle\nabla\Delta  u ,\nabla v\nabla u+v\nabla ^2u\rangle\d s
+N(t)\,,
\end{multline}
where $N(t)$ is defined as
\begin{equation}
\begin{aligned}
\label{nota:N}
N(t)
&=\int_0^t\langle\Delta u,\Delta [u\times\circ \d W]\rangle
\\
&:=\int_0^t\langle \Delta u,\Delta u\times\circ\d W\rangle
+2\int_0^t\langle \Delta u,\partial _\beta u\times\circ\d \partial _\beta W\rangle
+\int_0^t\langle \Delta u,u\times\circ\d \Delta W\rangle
\\
&=2\int_0^t\langle \Delta u,\partial _\beta u\times\circ\d \partial _\beta W\rangle
+\int_0^t\langle \Delta u,u\times\circ\d \Delta W\rangle\,,
\end{aligned}
\end{equation}
where we have made use of Leibniz Formula and orthogonality.

With the exception of the fourth term in the r.h.s.\ of \eqref{ito_laplace}, each term can be evaluated by \eqref{ineq:interp}
 and H\"older Inequality, in order to yield an estimate of the form \eqref{bound:increment}.
 For the fourth term, we write that
\[
2\int_0^t\langle \Delta u,u|\nabla^2u|^2\rangle\d s
\equiv2\iint_{[0,t]\times\O}(\Delta u\cdot u)|\nabla ^2u|^2\d x\d s
=-2\iint_{[0,t]\times\O}|\nabla u|^2|\nabla ^2u|^2\d x\d s\,,
\]
using once more the identity $0=\Delta (\frac{|u^2|}{2}) = u\cdot \Delta u +|\nabla u|^2. $

Hence, using Young Inequality several times, it is easily seen that for any $\delta _1>0$ fixed, there is a constant $C_1>0$ such that
\begin{multline}
\label{bootstrap_2}
\frac12\big(|\Delta u(t)|^2_{L^2}-|\Delta u(0)|_{L^2}^2\big)
+\int_0^t|\nabla \Delta u|_{L^2}^2\d s -N(t)
\\
\leq C_1\iint_{[0,t]\times\O}|\Delta u|^2|\nabla u|^2\d x\d s
+C_1\iint_{[0,t]\times\O}|\nabla^2 u|^2|\nabla u|^ 2\d x\d s
\\
\quad +\delta _1\int_0^t|\nabla \Delta u|^2_{L^2}\d s
+\iint_{[0,t]\times\O}|\nabla v|^2|\nabla u|^2\d x\d s
+\iint_{[0,t]\times\O}|v|^2|\nabla^2 u|^2\d x\d s
\\
=:\mathrm{I}+\mathrm{II}+\mathrm{III}+\mathrm{IV}+\mathrm{V}\,.
\end{multline}
To estimate the first term, we use again \eqref{ineq:interp}, which gives:
\begin{multline*}
\iint_{[0,t]\times\O}|\Delta u|^2|\nabla u|^2\d x\d s \leq 
\left(\iint_{[0,t]\times\O}|\Delta u|^4\d x\d s\right)^{1/2}\left(\iint_{[0,t]\times\O}|\nabla u|^4\d x\d s\right)^{1/2}
\\
\leq \sqrt{\mu _0}\left(\sup_{s\in[0,t]}|\Delta u|_{L^2}^2\iint_{[0,t]\times\O}|\nabla \Delta u|^2\d x\d s\right)^{1/2}\chi(t)^{1/2}
\end{multline*}
Thus, we have for any $\delta _2>0$:
\begin{equation}
\label{delta_u_nabla_u}
\mathrm{I}\leq C_2\chi(t)\sup_{s\in[0,t]}G(s) + \delta _2\int_0^t|\nabla \Delta u|^2_{L^2}\d s,
\end{equation} 
where $C_2>0$ only depends on $\delta _2.$

Similarly, estimating the $H^2$ norm by the Laplacian, we have for the second term
\begin{equation}
\label{delta_u_nabla_u_2}
\mathrm{II}\equiv C_1\iint_{[0,t]\times\O}|\nabla^2 u|^2|\nabla u|^ 2\d x\d s
\leq 
C_3\chi(t)\sup_{[0,t]}G(s) + \delta _3\int_0^t|\nabla \Delta u|^2_{L^2}\d s\,.
\end{equation}

Concerning the fourth term in \eqref{bootstrap_2}, we proceed similarly to obtain that
\begin{equation}
\label{term4}
\mathrm{IV}
\leq C_4\chi(t)\sup_{s\in[0,t]}G(s) + \delta _4\int_0^t|\nabla ^2v|_{L^2}^2\d s\,.
\end{equation} 

Similarly,
\begin{multline}
\nonumber
 \iint_{[0,t]\times\O} |v|^2|\nabla ^2u|^2\d x\d s
\leq \sqrt{\mu _0}\left(\iint_{[0,t]\times\O}|v|^4\d x\d s\right)^{1/2}
\\
\times\left(\sup_{s\in[0,t]}|\nabla ^2 u |^2 \iint_{[0,t]\times\O}|\nabla ^3u|^2\d x\d s\right)^{1/2}
\end{multline}
and thus:
\begin{equation}
\mathrm{V}
\leq C_5\chi(t)\sup_{s\in[0,t]}G(s) + \delta _5\int_0^t|\nabla \Delta u|^2_{L^2}\d s,
\end{equation}

It remains to evaluate the velocity terms.
First, multiply the equation on $v$ by $\Delta v$ (which is also divergence free), and integrate on $[0,t]\times\O$ to get:
\begin{multline}
\label{ito_nabla_v}
|\nabla v(t)|_{L^2}^2-|\nabla v(0)|^2_{L^2} +\int_0^t|\Delta v|_{L^2}^2\d s
\\
=\int_0^t\langle\Delta v, \mathscr P[ -v\cdot \nabla v -\div(\nabla u\odot\nabla u) ]\rangle\d s\,.
\end{multline}
Note that, integrating by parts
\[
-\iint_{[0,t]\times\O} \Delta v\cdot (v\cdot \nabla v)\d x\d s
=\iint_{[0,t]\times\O} \Delta v\cdot (v\cdot \nabla v)\d x\d s
=0\,.
\]
Moreover, it holds for $i=1,2:$
\[
[\div(\nabla u\odot\nabla u)]^i
=\partial _{jj}u \cdot \partial _iu + \partial _{j}u\cdot \partial _{ij}u
=
[\Delta u\cdot \nabla u + \frac12 \nabla (|\nabla u|^2)]^i\,,
\]
and therefore
\[
\mathscr P[\div(\nabla u\odot\nabla u)]=\mathscr P[\Delta u\cdot \nabla u]\,.
\]
The last term in \eqref{ito_nabla_v} is then estimated as:
\[
\int_0^t\langle\Delta v,-\div(\nabla u\odot\nabla u) \rangle\d s
\leq \delta _6 \iint_{[0,t]\times\O}|\Delta v|^2\d x\d s+ C_6\iint_{[0,t]\times\O}|\mathscr P[\Delta u\cdot \nabla u]|^2\d x\d s\,.
\]
Using \eqref{ineq:interp} and the continuity of $\mathscr P,$ we have on the other hand
\begin{multline}
\label{bootstrap_3}
\iint_{[0,t]\times\O}|\mathscr P[\Delta u\cdot \nabla u]|^2\d x\d s
\leq C(\mathscr P)\int_0^t|\Delta u|_{L^4}^2|\nabla u|_{L^4}^2\d s
\\
\leq C_7\chi(t)\sup_{s\in[0,t]}G(s) + \delta _7\int_0^t|\nabla \Delta u|^2_{L^2}\d s\,,
\end{multline}
where we have estimated the right hand side as in \eqref{delta_u_nabla_u}.

Now, multiply the equation on $v$ by $\partial _tv$.
Integrating over $[0,t]\times\O,$ it holds
\[
\begin{aligned}
|\nabla v(t)|^2_{L^2}&
-|\nabla v(0)|^2_{L^2}+\int_0^t|\partial _tv|^2_{L^2}\d s
\\
&\leq -\iint_{[0,t]\times\O}v\cdot \nabla v\cdot \partial _tv \d x\d s
+\iint_{[0,t]\times\O}|\partial _tv||\Delta u||\nabla u|\d x\d s
\\
&\leq \delta _8\iint_{[0,t]\times\O}|\partial _tv|^2\d x\d s
\\
&\quad \quad \quad \quad 
+C_8\Big(\iint_{[0,t]\times\O} |\Delta u|^2|\nabla u|^2\d x\d s+\iint_{[0,t]\times\O}|\nabla v|^2|v|^2\d x\d s\Big)\,.
\\
&=: \delta _8\iint_{[0,t]\times\O}|\partial _tv|^2 + C_8(\mathrm{VI} + \mathrm{VII})\,.
\end{aligned}
\]
Using again \eqref{delta_u_nabla_u}, we obtain
\begin{equation}
\label{bootstrap_4}
\mathrm{VI}
\leq C_{9}\chi(t)\sup_{s\in[0,t]}G(s) + \delta _{9}\int_0^t|\nabla\Delta  u|^2_{L^2}\d s\,,
\end{equation} 
while
\begin{equation}
\label{bootstrap_5} 
\mathrm{VII}
\leq C_{10}\chi(t)\sup_{s\in[0,t]}G(s)+\delta _{10}\int_0^t|\nabla ^2v|_{L^2}^2\d s\,.
\end{equation} 

Finally, 
summing the inequalities \eqref{nota:M}, \eqref{bootstrap_1} and \eqref{nota:N}--\eqref{bootstrap_5}, we obtain \eqref{bound:increment}
by choosing the constants $\delta _i,i=1,\dots,10$ small enough.

~\paragraph{\itshape Step 2: the main stochastic estimate}
Using Burkholder-Davies-Gundy Inequality,
we have from Claim \ref{clm:M}:
\begin{multline}
\E\left[\sup_{s\in[0,t]}|\hat Y(s)|^{2m}\right]
\leq C(m)\E\left[\langle Y\rangle(t)^{m}\right]
\\
\leq
\E\Bigg[\Big(\int_0^t\Big\{
|\C^*\div(u\times\nabla \ttt)|_{L^2}^2
+|\C^*\Delta (u\times\ttt)|_{L^2}^2
+|\C^*\partial _i (v^iu\times\ttt)|_{L^2}^2
\\
+|\C^*\Delta (-u\times \Delta u)|_{L^2}^2
+|\C^*\div (u\times \nabla \Delta u)|_{L^2}^2
\Big\}\d s\Big)^{m}\Bigg]
\end{multline}

Thanks to the fact that $\hs{k}\hookrightarrow \mathscr L(L^2,H^k),$ for each $k=1,2,3,$ 
we deduce from the above estimate the existence of a constant $C(m,|\C|_{\hs{3}})>0$ so that
\begin{multline}
\label{sto:est}
 \E\left[\sup_{s\in[0,t]}|\hat Y(s)|^{2m}\right]
\leq 
C(m,|\C|_{\hs{3}})\E\Big[\Big\{\int_0^t\big(|\nabla \ttt(s)|_{L^2}^2 +|\ttt(s)|_{L^2}^2
\\
+|v(s)|^2_{L^4}|\ttt(t)|_{L^4}^2
+|\Delta u(s)|_{L^2}^2
+|\nabla \Delta u(s)|_{L^2}^2\big)\d s\Big\}^m\Big]\,.
\end{multline}

\bigskip

\paragraph{\itshape Step 3. Conclusion}
By Claim \ref{clm:M}, we have $Y(t) = A(t)+\hat Y(t)$
where $\hat Y$ is a martingale and $A$ has finite variation.
Moreover, we infer from \eqref{ito_cor} together with Sobolev embeddings and the fact that $|u|_{\R^3}= 1$, that:
\begin{multline}
\label{est:A}
|A(t)|
\leq C\sum_{l\in\mathbb{N}}|\nabla \psi |_{L^\infty}^2\int_0^t(|\nabla u(s)|^2 + |v(s)|^2+|\Delta \C_l|^2_{L^2})\d s
\\
\leq C(|\C|_{\hs{3}})\int_0^t(1+|\nabla u(s)|^2 + |v(s)|^2)\d s
\end{multline}
From \eqref{bound:increment} together with \eqref{est:A}, we infer
by a well-known generalization of Gronwall inequality that $\P$-a.s.:
\begin{multline}
\label{pre_pathwise_bound}
\mathscr G:=\sup_{t\in[0,T]} G(t) +
\int_0^T\big(|\nabla \Delta u(t)|^2_{L^2}+|\nabla \ttt(t)|_{L^2}^2 +|\nabla^2 v(t)|^2_{L^2}
+|\partial _tv(t)|_{L^2}^2\big)\d t
\\
\leq C\Big[G(0) + \sup\nolimits_{t\in[0,T]}|\hat Y(t)|\Big]\exp\left\{\int _0^T|v(t)|^4_{L^4}+|\nabla u(t)|^4_{L^4}\d t\right\}\,.
\end{multline}

Therefore, using \eqref{sto:est} we have for any $m\geq 1$ in \eqref{pre_pathwise_bound}, for arbitrary $\delta >0:$
\begin{multline*}
\E \left[\mathscr G^m \right]
\leq \frac{C}{2\delta }\E\left[\exp \left\{2m\int _0^T|v(t)|^4_{L^4}+|\nabla u(t)|^4_{L^4}\d t\right\}\right]
+\frac{C\delta }{2}\E\left[\sup|\hat Y(t)|^{2m}\right]
\\
\leq \frac{C(m,\rho ,T,E_0,|\C|_{\hs{1}})}{2\delta }
+\frac{C\delta }{2}\E\Big[
\Big\{\int_0^T\big(|\Delta u|^2_{L^2} +|\nabla \Delta u|_{L^2}^2
\\
+|\nabla \ttt|_{L^2}^2 +|\ttt|_{L^2}^2 +|v|^2_{L^4}|\ttt|_{L^4}^2
\big)\d t\Big\}^m\Big]\,,
\end{multline*}
and the conclusion follows by \eqref{ineq:interp} and absorption to the left, provided $\delta >0$ is taken sufficiently small.
\end{proof}

\subsection{Proof of Theorem \ref{thm:bootstrap}}

We now have all in hand to prove our main Theorem.
The proof is based on an idea of Debussche, De Moor and Hofmanov\'a \cite{debussche2015regularity}.

\bigskip

{\itshape Step 1: increasing the regularity of the stochastic convolution.}
	We define the stochastic convolution
	\begin{equation}
	Z(t):=\int _0^te^{t\Delta}u\times \d  W \equiv\int _0^te^{t\Delta}u\times \C \d \xi ,
	\end{equation}
  $\xi $ being the cylindrical Wiener process given formally by the infinite sum $\sum_{l\geq0}\mathbf B_l(\cdot )f_l.$
Note that the latter series is a well-defined element of $C (H^{-1-\epsilon })$ for any $\epsilon >0,$ where for simplicity we henceforth denote by $C(H^{s}):=C(0,\zeta ;H^s),$ $L^m (H^s):=L^m(0,\zeta ;H^s)$ and so on.
	
	By Proposition \ref{pro:higher}, for any $m\in (4,\infty)$ we have 
	\begin{equation}
	\label{bound:u_H2}
	\E\left[\|u\|_{L^\infty(H^2)}^m\right]<\infty,
	\end{equation}
	hence making use of the parabolic estimate (Prop.\ \ref{pro:parabolic_sto}) with 
	\[
	\delta :=\frac12 \in \big(0,1-\frac2m\big),
	\] 
	we obtain the bound
	\begin{multline}
	 \label{bound:X}
	\E\left[\|Z\|_{C(W^{2,4})}^m\right]
	\leq
	C\E\left[\|Z\|_{C(H^{5/2})}^m\right]
	\\
	\leq
	C'\E\left[\|u\times \CC \|_{L^m(\hs{2})}^m\right]
	\leq
	C''(|\C|_{\hs{4}},T)\E\left[\|u\|_{L^\infty(H^2)}^m\right]\,,
	\end{multline}
	where we have used the Sobolev embeddings $H^{5/2}\hookrightarrow W^{2,4}$ and $H^4\hookrightarrow W^{2,\infty}.$
	
	Next, we define $y:=u-Z$ which solves the following PDE with random coefficients:
	\begin{equation}
	\label{eq:y}
	\partial _t y -\Delta y=-v\cdot \nabla u+u|\nabla u|^2:=f+g.
	\end{equation}
	From \eqref{bound:u_H2} and the Sobolev embedding Theorem, we have 
	\begin{equation}
	\label{bound:f}
	g\in L^m(\Omega; C(L^p)),\quad 
	\text{for any}\quad p\in[1,\infty).
	\end{equation}
	Similarly, from Proposition \ref{pro:higher}, we get
	\begin{equation}
	\label{bound:v_H1}
	\E\left[\|v\|_{C(H^1)}^m\right]<\infty,
	\end{equation}
	and therefore, it holds as well
	\begin{equation}
	\label{bound:g}
	f\in L^m(\Omega; C(L^p)),\quad 
	\text{for any}\quad p\in[1,\infty).
	\end{equation}
	Let $p=4.$
	As a consequence of \eqref{eq:y},\eqref{bound:f},\eqref{bound:g}, one concludes from the standard parabolic estimates (we refer, e.g.\ to \cite{lions1968problemes}, see also \cite{grisvard1969equations}) that $y$ belongs to $L^m(\Omega;C(W^{2,4})),$ together with the bound
	\begin{multline}
	\label{bound:y}
	\E\left[\|y\|^m_{C(W^{2,4})}\right]
	\leq C\E\left[\|v\|^m_{C(L^8)}\|\nabla u\|^m_{C(L^8)}+ \|\nabla u\|^{2m}_{C(L^8)}\right]
	\\
	\leq C'\E\left[\|v\|_{C(H^1)}^{2m} +\|u\|_{C(H^2)}^{2m}\right].
	\end{multline}
	Using \eqref{bound:X}, then \eqref{bound:y} leads to better regularity for $u\equiv y +Z,$
	namely:
	\begin{equation}
	\label{bound:u_W_24}
	\E\left[\|u\|^m_{C(W^{2,4})}\right]
	\leq C(|\C|_{\hs{4}},T)\E\left[\|v\|_{C(H^1)}^{2m}+\|u\|^{2m}_{C(H^2)}\right].
	\end{equation}
	
	\bigskip
	
	\paragraph{\itshape Step 2: increasing the regularity of the solution}
	From the previous step, we also infer better regularity for the velocity term.
	Indeed, using the equation on $v$ and Sobolev embeddings, we obtain an estimate
	\[
	\begin{aligned}
	\|v\|_{C(W^{1,4})}
	&\leq \|\mathscr P (v\cdot \nabla v)\|_{C(W^{-1,4})} + \|\mathscr P\div\nabla u\odot\nabla u\|_{C(W^{-1,4})} 
	\\
	&\leq C\left(\|v\cdot \nabla v\|_{C(L^{4/3})} + \|\nabla u\odot \nabla u\|_{C(L^4)}\right)
	\\
	&\leq C\left(\|v\|_{C(L^4)}\|\nabla v\|_{C(L^2)} + \|\nabla u\|^{2}_{C(L^8)}\right).
	\end{aligned}
	\]
	Hence
	\begin{equation}
	\label{bound:v_W_14}
	\E\left[ \|v\|_{C(W^{1,4})}^m\right]
	\leq C\E\left[\|v\|_{C(H^1)}^{2m} +\|u\|_{C(H^2)}^{2m}\right].	
	\end{equation}
	
	Now, because of \eqref{bound:u_W_24} we have $\nabla g\equiv \nabla u|\nabla u|^2 +2u\nabla ^2u\nabla u\in L^m(\Omega;C(L^{2})),$ and because of \eqref{bound:v_W_14} it holds as well
	$\nabla f\equiv -(\nabla v)^T\cdot \nabla u -v\cdot \nabla^2 u\in L^m(\Omega;C(L^{2})).$ 
	Therefore, we obtain that $y\in L^m(\Omega;C(H^3)),$ and there holds
	\begin{equation}
	\E\|y\|_{C(H^3)}^m \leq C_m\E \left[1+\|v\|_{C(H^1)}^{2m}+\|u\|_{C(H^2)}^{2m}\right]\,.
	\end{equation}

	On the other hand, since $u=y+Z$ has gained $1/2$ degree of regularity, one can repeat Step 1 to obtain 
	\[
	\E \left[\|Z\|_{C(H^3)}^m\right]
	\leq C(m,|\C|_{\hs{5}},T)\E \left[1+\|v\|_{C(H^1)}^{2m}+\|u\|_{C(H^2)}^{2m}\right].
	\]
    We eventually obtain the needed bound on $u,$ namely
	\[
	\E\left[ \|u\|^{m}_{C(H^3)}\right]
	\leq C(|\C|_{\hs{5}},T)\E \left[1+\|v\|_{C(H^1)}^{2m}+\|u\|_{C(H^2)}^{2m}\right].
	\]
	Finally, the bounds on $v$ follow by reusing the parabolic estimates together with appropriate Sobolev embeddings.
	This finishes the proof of Theorem \ref{thm:bootstrap}.
\hfill\qed

\section{Convergence of regular approximations}
\label{sec:convergence}
\subsection{Tightness and passage to the limit}
We now define a sequence $\{ W _n,n\in\N\}$ of Wiener processes in $L^2(\O;\R^3)$
whose covariance belongs to $\hs{s}$ for every $s\in\R.$
More precisely, for each $n\in\N$ we let
\begin{equation}
\label{nota:Wn}
 W_n:=\sum_{0\leq l \leq n} \mathbf B_l (\cdot )\C f_l \, ,
\end{equation}
for $(f_l )_{l\in\N}$ and $(\mathbf B_l )_{l\in\N}$ as in \eqref{nota:eta}.
Next, consider a sequence $(w_n,y_n;\tau_n)$ of strong solutions of \eqref{SEL} such that
\begin{multline}
\label{Hyp1}
w_n(0)\in C^\infty(\O;\R^2),\enskip \div w_n=0,
\quad y_n(0)\in C^\infty(\O;\R^3),\enskip |y_n(0)|_{\R^3}=1\enskip \text{a.e.\ }\quad 
\\
\text{and}\quad (w_n(0),y_n(0))\rightarrow (v_0,u_0) \enskip \text{in}\enskip \L^2\times\HH^1\enskip \text{strong},
\end{multline}
solving the following regularized problem on $[0,\tau _n)$:
\begin{equation}\label{approx}
\left\{
\begin{aligned}
&\d w_n + \big(Aw_n + \mathscr P[w_n\cdot \nabla w_n +\div\nabla y_n\odot\nabla y_n]\big)\d t=0
\\
&\d y_n-\big(\Delta y_n+y_n|\nabla y_n|^2-w_n\cdot \nabla y_n-F_{\C _n}y_n\big)\d t=y_n\times \d  W_n\,.
\end{aligned}
\right.
\end{equation}
Moreover we define the stopping time $\tau_n$ by the property:
\begin{equation}
\label{deftaun}
\P\text{-a.s., }\quad 
\tau _n=T\quad \quad \text{or}\quad \limsup_{t\nearrow\tau _n}[|w_n(t)|_{H^2}+|y_n(t)|_{H^3}]=\infty\enskip .
\end{equation}
Note that the existence of such a sequence is guaranteed by Theorem \ref{thm:localSolv}.
Now, fix $\epsilon_1\in(0,1/\mu _1)$ and choose a positive, non-increasing sequence 
\[
\rho_k\to0,\quad k\to\infty.
\]
For $n,k\in\N$,
define the stopping times:
\begin{equation}\label{zeta_tightness}
\zeta _{n,k}:=
\inf\Big\{0\leq t<\tau_n,\enskip\sup\limits_{x\in \O}\int\nolimits_{B(x,\rho_k)}\big(|w_n(t,y)|^2+|\nabla y_n(t,y)|^2\big)\d y\geq\epsilon_1\Big\},
\end{equation}
with the convention that the infimum is equal to $\tau _n$ whenever the above set is empty.
In the sequel, we will denote by $(v_{n,k},u_{n,k})$, $k\in\N$, the process:
\begin{equation}\label{nota:v_nk}
(v_{n,k}(t),u_{n,k}(t))=
\left\{
\begin{aligned}
&(w_n(t),y_n(t))\qquad\text{if}\enskip0\leq t\leq \zeta_{n,k}\,,
 \\[1em]
&\Big(e^{(\zeta _{n,k}-t)A^2}w_n(\zeta_{n,k}),e^{-(t-\zeta _{n,k})\Delta ^2}y_n(\zeta_{n,k})\Big)\quad\text{if}\enskip\zeta_{n,k}<t\leq T\,.
\end{aligned}
\right.
\end{equation}

\begin{notation}
For each sequence of random variables $X_k:\Omega \to E,k\in\N$ where $E$ is some Polish space,
we will henceforth denote by $\mathbf X$ the corresponding random variable in the product space $\prod _{k\in\N}E.$
For instance, we define for each $n\in\N:$
\[
\mathbf{v}_{n}:=\{v_{n,k},k\in\N\},\quad 
\mathbf{u}_{n}:=\{u_{n,k},k\in\N\},\quad 
\boldsymbol{\zeta }_{n}:=\{\zeta _{n,k},k\in\N\},
\]
and so on.
\end{notation}
\begin{claim}
\label{clm:tightness}
There exists $\alpha \in(0,\frac12)$ such that
for every $\delta<1$
the sequence 
\[
(\mathbf{v}_n , \mathbf{u}_n,\boldsymbol{\zeta }_n, W_n),\quad n\in\N
\]
is tight in the space 
\[
\mathfrak U^\delta :=\Big(\prod_{k\in\N}\V_T^{\delta -1}\Big)\times \Big(\prod_{k\in\N}\U_T^{\delta}\Big) \times \Big(\prod_{k\in\N}[0,T]\Big)\times C^\alpha ([0,T];H^1).
\]
\end{claim}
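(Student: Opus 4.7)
Tightness will be established factor by factor and then combined via the standard fact that a product of tight sequences in a countable product of Polish spaces is tight (pick compacts $K_k$ in each factor with failure probability $\leq\varepsilon/2^k$). It thus suffices to show: (i)~$W_n$ is tight in $C^\alpha([0,T];H^1)$ for some $\alpha\in(0,1/2)$; (ii)~each $\zeta_{n,k}$ is tight in $[0,T]$, which is trivial by compactness; (iii)~for each $\delta<1$ and each fixed $k\in\N$, the pair $(v_{n,k},u_{n,k})$ is tight in $\V_T^{\delta-1}\times\U_T^\delta$. Point (i) follows from Kolmogorov's continuity criterion: since $W_n(t)-W_n(s)$ is centered Gaussian in $H^1$ with variance bounded by $(t-s)|\C|_{\hs 1}^2$ uniformly in $n$, arbitrary Hölder moments are available.

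\textbf{A priori bounds.} Fix $k\in\N$. On the random interval $[0,\zeta_{n,k}]$ we have $(v_{n,k},u_{n,k})=(w_n,y_n)$, a strong solution of \eqref{approx} driven by $W_n$, so Proposition~\ref{pro:global_estimates} yields uniform moments
\[
\sup_n\E\Big[\sup_{t\leq\zeta_{n,k}}E_n(t)^m+\Big(\int_0^{\zeta_{n,k}}\big(|\nabla w_n|_{L^2}^2+|\tt_{y_n}|_{L^2}^2\big)\,\d s\Big)^m\Big]<\infty,
\]
and Lemma~\ref{lem:L2H2} — whose threshold condition $\varepsilon_1<1/\mu_1$ is ensured by \eqref{zeta_tightness} — upgrades this to a moment bound on $\int_0^{\zeta_{n,k}}|\Delta y_n|_{L^2}^2\,\d s$, at the cost of a constant depending on $\rho_k$. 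Beyond $\zeta_{n,k}$, the biharmonic heat extension defining \eqref{nota:v_nk} is a contraction in every Sobolev norm and is analytic in time, so these bounds transport unchanged to $[\zeta_{n,k},T]$. Altogether, $(v_{n,k},u_{n,k})$ is uniformly bounded in $L^m(\Omega;\V_T^0\times\U_T^1)$ for every $m\geq 1$.

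\textbf{Time regularity and compactness.} Spatial boundedness alone is not compact in lower-regularity norms, so it must be coupled with fractional-in-time regularity. Arguing exactly as for $\T_1,\ldots,\T_8$ in Section~\ref{sec:uniqueness}, using the identity $\mathscr P\,\mathrm{div}(\nabla u\odot\nabla u)=\mathscr P(\Delta u\cdot\nabla u)$ together with \eqref{ineq:interp} and the a priori bounds just obtained, the deterministic drifts of $w_n,y_n$ in \eqref{variational_identity_1}--\eqref{variational_identity_2} are uniformly bounded in $L^2(0,T;\H^{-1})$ and $L^2(0,T;\HH^{-1})$, giving uniform $W^{1,2}(0,T)$ control at that level. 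Burkholder-Davis-Gundy applied to $\int_0^\cdot y_n\times\d W_n$, together with $|y_n|=1$ and $\C\in\mathscr L_2(L^2,L^2)$, yields uniform moments in $W^{\alpha',m}(0,T;\LL^2)$ for any $\alpha'<1/2$ and $m\geq 2$, and the analytic biharmonic continuation past $\zeta_{n,k}$ adds no further difficulty. A fractional Aubin-Lions-Simon lemma (in the spirit of Flandoli-Gatarek) then provides compactness of the relevant embedding into $L^2(0,T;\H^\delta)\cap C(0,T;\H^{\delta-1})$, with the analogous statement for $\U_T^\delta$; Markov's inequality in $L^m(\Omega)$ converts these pathwise compactness statements into tightness of the law of $(v_{n,k},u_{n,k})$ in $\V_T^{\delta-1}\times\U_T^\delta$.

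\textbf{Main difficulty.} The delicate point is the simultaneous management of the quadratic nonlinearities, the geometric constraint $|y_n|=1$, the martingale forcing, and the stopping times $\zeta_{n,k}$: the essential bound on $\Delta y_n$ from Lemma~\ref{lem:L2H2} is only available on $[0,\zeta_{n,k}]$ and degenerates as $\rho_k\to 0$. This is precisely what forces the decomposition \eqref{nota:v_nk} with the biharmonic extension, which trivially preserves all Sobolev estimates, and the $k$-by-$k$ argument; the $\rho_k$-dependence of the constants is then absorbed harmlessly in the countable product.
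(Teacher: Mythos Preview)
Your proposal is correct and follows essentially the same route as the paper: uniform $\V_T^0\times\U_T^1$ bounds on $[0,\zeta_{n,k}]$ from Proposition~\ref{pro:global_estimates} and Lemma~\ref{lem:L2H2}, propagated to $[\zeta_{n,k},T]$ via the biharmonic semigroup (the paper's hypercontractivity estimate), then time regularity read off the equation and compactness via Aubin--Lions plus Markov. The paper's own proof is deliberately sketchy (it explicitly omits the time-regularity step), so your version is in fact more detailed; the only cosmetic remark is that the drift of $u_{n,k}$ actually lands in $L^2(0,T;\LL^2)$ rather than merely $\HH^{-1}$, and on the biharmonic tail one should place the drift in a sufficiently negative space (e.g.\ $L^2(0,T;H^{-3})$) or use the H\"older-in-time smoothing of the analytic semigroup directly, but either choice feeds into Aubin--Lions without difficulty.
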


\begin{proof}
The proof relies in particular on the bound
\begin{multline*}
\E\left[\|v_{n,k}\|_{\V_{T}^0}^2\right]-1
\leq 
\E\left[\sup_{t\in[0,\zeta _{n,k}]}|v_{n,k}(t)|^2_{L^2} +\int_0^{\zeta _{n,k}}|\nabla v_{n,k}|_{L^2}^2\d s\right]
\\
+\E\left[\sup_{t\in[\zeta _{n,k},T]}\left|e^{A^2 (\zeta -t)}v_{n,k}(\zeta _{n,k})\right|^2_{L^2}
+\int_{\zeta _{n,k}}^T|e^{A ^2(\zeta _{n,k}-s)}\nabla v_{n,k}|_{L^2}^2\d s\right]
\\
\leq 
C\left(k,E_0,T,|\C|_{\hs{1}}\right)
+C\E\left[\int_{\zeta _{n,k}}^T\frac{|v_{n,k}|_{L^2}^2}{(s-\zeta _{n,k})^{1/2}}\d s\right]
\leq C''\left(k,E_0,T,|\C|_{\hs{1}}\right),
\end{multline*}
by \eqref{hypercontractivity} particularized for $\Lambda := (A^2,\H^4),$ and  $p=q=2.$

Proceeding similarly for $u_n$, one obtains as well the estimate 
\[
\E\left[\|u_n\|_{\U_T^1}^2\right]\leq C(k,E_0,T,|\C|_{\hs{1}}).
\]
In order to apply Aubin-Lions' lemma, one needs first higher order estimates with respect to the time-like variable, but in an arbitrary Sobolev space, typically of negative order. Such estimates follow directly from the equation \eqref{approx}, and hence the proof is omitted. The conclusion follows from Markov Inequality.
\end{proof}

Thanks to \ref{clm:tightness} and Prokhorov Theorem, we infer the existence of an extraction $n_\ell ,\ell \in\N$, and a law $\mu $ supported in $\mathfrak U^1$ such that
\[
\P_{(\mathbf v_{n_\ell },\mathbf u _{n_\ell},\boldsymbol\zeta _{n_\ell },W_{n_\ell })}\to \mu\quad   \text{weakly.}
\]
Furthermore, using Skorohod Embedding Theorem (see \cite{watanabe1981stochastic})
we see that there exist
\begin{itemize}
 \item a stochastic basis $\mathfrak {\Hat P}=(\Hat\Omega,\mathcal{\Hat A},\mathbb{\Hat P},(\mathcal{\Hat F}_t)_{t\in[0,T]},\Ws)$, where $\Ws$ is a Wiener process in $L^2(\O;\R^3)$ such that \eqref{covariance_law} holds;
 \item a sequence of random variables 
 \[
\big(\Bus_\ell ,\Bzs_\ell ,\Bzetas _\ell ,\Ws_\ell \big) 
\in L^0(\Hat\Omega,\mathfrak U^3)
,\quad \ell \in\N,
 \]
such that for each $\ell \in\N$, identifying $\Bus_\ell $ (resp.\ $\Bzs_\ell $, $\Ws_\ell $) as a mapping from $\Hat\Omega\times[0,T]$ to $\prod_{k\in\N}\L^2$ (resp.\ to $\prod_{k\in\N}\HH^1$, $\HH^1$), then the corresponding stochastic process is predictible with respect to the filtration 
\[
\mathcal{\Hat F}_t^\ell := \sigma \{\Bus_\ell (s),\Bzs_\ell (s), \Ws_\ell (s), \enskip s\leq t\},\quad  t\in[0,T];
\]
the element $\Ws_\ell$ is an $\LL^2$-Wiener process with respect to the above filtration,
and its covariance is given by \eqref{covariance_law} with $\C_{n_\ell }$ instead of $\C$;
for each $k\in\N,$ the random variable $\zetas_{\ell ,k}$ is a positive stopping time;
\item limits $(\Bzs,\Bus,\Bzetas,\Ws)\in \mathfrak U^1$, for every $k\in\N$,
\end{itemize}
such that the following convergences hold for each $k\in\N$:
\begin{align}
\label{cv1}
&(\vs_{\ell ,k},\us_{\ell ,k})\to(\vs_k,\us_k)\quad\mathbb{\Hat P}-\text{a.s.}
\quad \text{in}\enskip \V_T^{\delta -1}\times\U_T^{\delta} \enskip \text{for every}\enskip \delta <1,
\\
\label{cv2}
&\zetas_{\ell ,k}\to\zetas _k\quad \mathbb{\Hat P}-\text{a.s.},
\\
\label{cv3}
&\Ws_\ell \to \Ws \quad \mathbb{\Hat P}-\text{a.s.\ in}\enskip C^\alpha ([0,T];H^1)\enskip \text{for}\enskip \alpha\enskip \text{as above.}
\end{align}
Furthermore, by the Sobolev Embedding Theorem, together with Proposition \ref{pro:global_estimates} and dominated convergence, we immediately obtain 
\begin{equation}
\label{cv4}
\begin{aligned}
&(\vs_{\ell,k},\nabla \hat u_{\ell ,k })\to (\vs_k,\nabla \us_k),\quad \text{in}\enskip L^2(\Omega\times[0,T];L^p),\quad \text{for every}\enskip p\in[1,\infty)\enskip .
\\
&\hat u_{\ell ,k}\to \us_k\quad \text{in}\enskip L^2(\Omega \times[0,T];L^\infty)\,,
\end{aligned}
\end{equation} 
for each $k\in\mathbb{N}.$

In particular, making use of \eqref{cv1}, \eqref{cv2}, \eqref{cv3}, \eqref{covariance_law} and dominated convergence,
it is easily seen that for each $k\in\N,$
\begin{equation}
\label{solution_sko}
(\mathfrak {\Hat P},\Ws;\vs_k,\us_k) \enskip \text{is a martingale solution of \eqref{SEL}}
\end{equation}
up to the stopping time $t=\zetas _k,$ in the sense of Definition \ref{def:mart_sol}.
Details are left to the reader (see, e.g., \cite{alouges2014semi}).

\subsection{Improvement of the convergence}
\label{rem:improvement}
We are now going to improve the convergence of $(\vs_{\ell ,k},\us_{\ell ,k})$ in $\V^{\delta -1}_\loc\times \U^\delta _\loc $ by showing that it remains true for $\delta =1.$ 

\begin{claim}
\label{pro:improvement}
Let $(\vs_{\ell ,k},\us_{\ell ,k})$ be as in the previous paragraph.

The following convergence holds
\begin{equation}
\label{cvgence}
(\vs_n,\us_n)\to (\vs,\us)\quad \text{in}\enskip L^2(\Omega ;\V^0\times \U^1).
\end{equation}
\end{claim}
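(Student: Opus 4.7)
Fix $k\in\N$. The plan is to combine the uniform bound
\[
\sup_{\ell}\E\bigl[\|\vs_{\ell,k}\|_{\V^0_T}^2+\|\us_{\ell,k}\|_{\U^1_T}^2\bigr]<\infty,
\]
(inherited from Proposition \ref{pro:global_estimates}, Lemma \ref{lem:L2H2}, and the hypercontractive estimate \eqref{hypercontractivity} applied past $\zetas_{\ell,k}$, as in the proof of Claim \ref{clm:tightness}) with the energy identity of Proposition \ref{pro:global_estimates}, in order to promote the strong convergence \eqref{cv1} from $L^2(\Omega;\V^{\delta-1}_T\times\U^\delta_T)$ to $L^2(\Omega;\V^0_T\times\U^1_T)$ via a convergence-of-norms argument.

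For the dissipative part $L^2(\Omega;L^2(0,T;\H^1)\times L^2(0,T;\HH^2))$, the uniform bound gives, along a subsequence, weak convergence to some $(\bar v,\bar u)$; the strong convergence \eqref{cv1} in a weaker topology identifies $(\bar v,\bar u)=(\vs_k,\us_k)$, hence the whole sequence converges weakly. To upgrade this to strong convergence, I would take expectation in the energy identity \eqref{estim:energy} on the stopped interval, using the pointwise identity $|\tt_u|^2_{\R^3}=|\Delta u|^2_{\R^3}-|\nabla u|^4_{\R^{3\times 2}}$, to obtain
\begin{align*}
\tfrac12\E&\bigl[|\vs_{\ell,k}(t\wedge\zetas_{\ell,k})|_{L^2}^2+|\nabla\us_{\ell,k}(t\wedge\zetas_{\ell,k})|_{L^2}^2\bigr]
+\E\int_0^{t\wedge\zetas_{\ell,k}}\bigl(|\nabla\vs_{\ell,k}|_{L^2}^2+|\Delta\us_{\ell,k}|_{L^2}^2\bigr)ds \\
&=E_{\ell,k}(0)+\E\iint_{[0,t\wedge\zetas_{\ell,k}]\times\O}|\nabla\us_{\ell,k}|^4\,dx\,ds
+\E[t\wedge\zetas_{\ell,k}]\cdot|\nabla\C_{n_\ell}|_{\mathscr L_2(L^2,L^2)}^2.
\end{align*}
Every term on the right hand side has a clean limit: the initial energy by \eqref{Hyp1}, the stopping time by \eqref{cv2} combined with dominated convergence, the It\^o correction by $\C_{n_\ell}\to\C$ in $\hs{1}$, and the quartic term by the strong convergence \eqref{cv4} of $\nabla\us_{\ell,k}$ in $L^4(\Omega\times[0,T];L^4)$ (uniform integrability being supplied by the exponential bound \eqref{bound:exp3}). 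Since the limit $(\vs_k,\us_k)$ is itself a martingale solution on $[0,\zetas_k]$ by \eqref{solution_sko}, it satisfies the same identity with the same limiting data; the dissipation integral therefore converges to its natural counterpart. Weak convergence together with convergence of norms in this Hilbert space yields strong convergence by uniform convexity.

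For the supremum part in $L^2(\Omega;C(\L^2)\times C(\HH^1))$, the same identity evaluated at a generic $t\in[0,T]$ gives pointwise-in-$t$ convergence of $\E\bigl[|\vs_{\ell,k}(t\wedge\zetas_{\ell,k})|_{L^2}^2+|\nabla\us_{\ell,k}(t\wedge\zetas_{\ell,k})|_{L^2}^2\bigr]$ to the corresponding $k$-quantity. Combined with the weak convergence of $(\vs_{\ell,k}(t),\us_{\ell,k}(t))$ in $L^2(\Omega;\L^2\times\HH^1)$ (inherited from the already established distributional convergence and the moment bounds), uniform convexity promotes this to pointwise strong convergence. A uniform modulus of continuity derived from the It\^o expansion (already used in Claim \ref{clm:tightness}) upgrades pointwise to uniform convergence in $t$. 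Beyond $\zetas_{\ell,k}$, where the processes are defined through the hypercontractive semigroups $e^{(\zetas_{\ell,k}-t)A^2}$ and $e^{(\zetas_{\ell,k}-t)\Delta^2}$, the strong continuity of these semigroups together with the strong convergence of the restarting values $(\vs_{\ell,k}(\zetas_{\ell,k}),\us_{\ell,k}(\zetas_{\ell,k}))\to(\vs_k(\zetas_k),\us_k(\zetas_k))$ in $\L^2\times\HH^1$ (a by-product of the previous step) closes the argument.

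The main obstacle is the passage to the limit in the nonlinear quartic term $\iint|\nabla\us_{\ell,k}|^4$, which genuinely requires \emph{strong} $L^4$ convergence of the spatial gradient; this is precisely what the subcritical threshold $\varepsilon_1<1/\mu_1$ (Lemma \ref{lem:L2H2}, via Proposition \ref{pro:interp2}) delivers through \eqref{cv4}, while the exponential moment \eqref{bound:exp3} ensures uniform integrability. A secondary subtlety is that weak lower semi-continuity a priori only yields $\liminf$ inequalities in the energy identity; the matching upper bound, which turns these into equalities, comes from the fact that $(\vs_k,\us_k)$ itself satisfies the same energy identity on $[0,\zetas_k]$.
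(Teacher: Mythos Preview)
Your approach --- a convergence-of-norms argument via the energy identity --- is genuinely different from the paper's, which instead derives a \emph{stability estimate on the differences} $g_\ell:=\vs-\vs_\ell$, $f_\ell:=\us-\us_\ell$ directly at the critical level. Concretely, the paper applies It\^o's formula to $\frac12|g_\ell|_{L^2}^2$ and $\frac12|\nabla f_\ell|_{L^2}^2$, exploits the same cancellation mechanism as in Proposition~\ref{pro:global_estimates} (the cross-terms $\mathfrak t+\mathfrak q$ collapse), and obtains a closed inequality of the form
\[
\E\Big[\sup_{t}\big(|g_\ell(t)|_{L^2}^2+|\nabla f_\ell(t)|_{L^2}^2\big)+\int_0^T\big(|\nabla g_\ell|_{L^2}^2+|\Delta f_\ell|_{L^2}^2\big)\d t\Big]
\le C\,\E\int_0^T \Phi_\ell(t)\,\big(|g_\ell|_{L^2}^2+|\nabla f_\ell|_{L^2}^2+|f_\ell|_{L^\infty}^2\big)\d t,
\]
with $\Phi_\ell=1+|\nabla(\vs_\ell,\vs)|_{L^2}^2+|(\vs_\ell,\vs)|_{L^4}^4+|\Delta(\us_\ell,\us)|_{L^2}^2+|\nabla(\us_\ell,\us)|_{L^4}^4$. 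The right hand side is then shown to vanish by Vitali's theorem, the crucial input being \emph{pathwise uniform integrability in $t$} of $|\nabla\vs_\ell|_{L^2}^2+|\Delta\us_\ell|_{L^2}^2$. This comes not from moment bounds but from the pathwise energy inequality \eqref{pre:L2H2}, which reduces the question to uniform equicontinuity of the martingales $X_\ell(t)=\int_0^t\langle\nabla\us_\ell,\us_\ell\times\d\nabla\Ws_\ell\rangle$; that in turn follows from $X_\ell\to X$ in $C([0,T];\R)$ a.s.\ plus the converse of Ascoli--Arzel\`a.

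The gap in your plan is the supremum-in-time part. You argue: for each fixed $t$, convergence of $\E[|\vs_{\ell}(t)|_{L^2}^2+|\nabla\us_\ell(t)|_{L^2}^2]$ plus weak convergence yields strong convergence in $L^2(\Omega;\L^2\times\HH^1)$ at that $t$; then ``a uniform modulus of continuity derived from the It\^o expansion (already used in Claim~\ref{clm:tightness})'' upgrades this to uniform-in-$t$. But Claim~\ref{clm:tightness} only provides time regularity in \emph{negative} Sobolev spaces, not in $\L^2\times\HH^1$; and even if you had equicontinuity of $t\mapsto(\vs_\ell(t),\us_\ell(t))$ in $L^2(\Omega;\L^2\times\HH^1)$, this would at best yield convergence in $C\big([0,T];L^2(\Omega;\L^2\times\HH^1)\big)$, not in $L^2\big(\Omega;C([0,T];\L^2\times\HH^1)\big)$ as required. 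Interchanging $\sup_t$ and $\E$ is precisely the hard point here. The paper's stability inequality sidesteps this entirely because the $\sup_t$ already sits \emph{inside} the expectation in the Gronwall-type bound. If you want to salvage a norm-convergence route, you would need to work pathwise: show $E_\ell(\cdot)\to E(\cdot)$ uniformly on $[0,T]$ a.s.\ (which again requires the equicontinuity of $X_\ell$ and uniform integrability of the dissipation --- the same ingredients the paper uses), combine with pathwise weak convergence in $\L^2\times\HH^1$ uniformly in $t$, and then use compactness of $\{(\vs(t),\us(t)):t\in[0,T]\}$ in $\L^2\times\HH^1$. This is feasible but considerably more delicate than your sketch suggests, and ultimately rests on the same analytic core as the paper's argument.
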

Before we proceed to the proof of Claim \ref{pro:improvement}, let us collect some inequalities.
Consider two martingale solutions $(\mathfrak P,W_1,v_1,u_1),$ and $(\mathfrak P,W_2,v_2,u_2)$ defined on the same probability space, and up to a common stopping time $\kappa >0,$ 
Denote further by  $g=v_1-v_2$ and by $f=u_1-u_2.$

Computing It\^o Formula on $\frac12|g(t)|^2$ we find that
\begin{equation}
\frac12|g(t)|_{L^2}^2 + \int_0^t|\nabla g|^2\d s
=\iint_{[0,t]\times\O}\big( -g^jg^i\partial _iv_1^j - g^jv_2^i\partial _i g^j \big)\d x\d s
+\mathfrak t(v_1,v_2;u_1,u_2)\,,
\end{equation}
where we denote by $\mathfrak t:(\V^0_{[0,\kappa ]})^2\times(\U^1_{[0,\kappa ]})^2\to \R$ the operation
\[
\mathfrak t:(v_1,v_2;u_1,u_2)\mapsto \iint_{[0,t]\times\O} (v_1^j-v_2^j)(\partial _{ii}u_2\partial _ju_2 - \partial _{ii}u_1\partial _ju_1)\d x\d s\,.
\]

Proceeding as in the proof of Theorem \ref{thm:uniqueness}, we have on the one's hand:
\begin{equation}
\label{stg_1}
|\iint_{[0,t]\times\O}-g^jg^i\partial _iv_1^j\d x\d s|
\leq  C(\epsilon )\int_0^t|g|^2_{L^2}( 1+|\nabla v_1|_{L^2}^2)\d s + \epsilon \int_0^t|\nabla g|_{L^2}^2\d s\,.
\end{equation} 
and similarly
\begin{equation}
 \label{stg_2}
|\iint_{[0,t]\times\O}- g^jv_2^i\partial _i g^j\d x\d s|
\leq C'(\epsilon )\int_0^t|g|_{L^2}(1+|v_2|_{L^4}^4)
+\epsilon \int_0^t|\nabla g|^2_{L^2}\d s\,.
\end{equation} 
Next, applying It\^o Formula to $\frac12|\nabla f|^2_{L^2},$ we obtain
\begin{multline}
\frac12|\nabla f(t)|^2_{L^2} + \int_0^t|\Delta f|^2_{L^2}\d s - X(t)
\\
=\iint_{[0,t]\times\O}\Big[(-\Delta f\cdot f)|\nabla u_1|^2
-(\Delta f\cdot u_2)(\nabla f\cdot \nabla (u_1+u_2)) \Big]\d x\d s
+\mathfrak q(v_1,v_2;u_1,u_2)\,,
\end{multline}
where $\mathfrak q:(\V^0_{[0,\kappa ]})^2\times(\U^1_{[0,\kappa ]})^2\to \R$ is defined as
\[
\mathfrak q:(v_1,v_2; u_1,u_2)\mapsto \iint_{[0,t]\times\O} v_1^j\partial _{ii}u_2\partial _ju_2 -v_1^j\partial _{ii}u_1\partial _ju_1 - v_2^j\partial _{ii}u_2\partial _ju_2 + v_2^j\partial _{ii}u_1\partial _ju_1\d x\d s\,.
\]
Now, a crucial observation is that
\[
\mathfrak t(v_1,v_2;u_1,u_2)+\mathfrak q(v_1,v_2;u_1,u_2)
=\iint_{[0,t]\times\O} [-g^j\partial _{ii}u_1\partial _jf + v_1^j\partial _{ii}f\partial _jf]\d x\d s,
\]
and hence, similar computations as above show that
\begin{multline}
\label{stg_3}
|\mathfrak t(v_1,v_2;u_1,u_2)+\mathfrak q(v_1,v_2;u_1,u_2)|
\\
\leq \epsilon \int_0^t(|\nabla g|^2_{L^2} + |\Delta f|^2_{L^2})\d s 
+ C(\epsilon )\int_0^t(|\Delta u_1|_{L^2}^2 + |v_1|_{L^4}^4 + 1)(|g|^2_{L^2} + |\nabla f|^2_{L^2})\d s\,.
\end{multline}

The remaining term can be estimated as follows:
\begin{equation}
\label{stg_4}
|\iint_{[0,t]\times\O} (-\Delta f\cdot f)|\nabla u_1|^2\d x\d s|
\leq\epsilon \int |\Delta f|^2_{L^2}\d s +C(\epsilon ) \int|\nabla u_1|_{L^4}^4|f|_{L^\infty}^2\d s\,.
\end{equation} 

Concerning the stochastic integral, we have for any $m>1:$
\begin{equation}
 \label{stg_5}
\E\left[\sup_{s\in[0,t]}|X(s)|^m \right]
\leq C(m,|\nabla \C|_{\mathscr L_2(L^2,L^2)})\E\left[\int_0^t|\nabla f|_{L^2}^2\d s\right]^{m/2}.
\end{equation}

Now, fix $k\geq 1$ and for simplicity in the notations let 
\[
(\vs_\ell ,\us_\ell ):= (\vs_{\ell ,k},\us_{\ell ,k})\,.
\]
Denote by 
\[
g_\ell:=\vs-\vs_\ell,\quad \text{and}\quad f_\ell:=\us-\us_\ell\,.
\]
From \eqref{cv4}, we infer that
\begin{align}
\label{almost_ev_1}
&(\vs_\ell,\nabla \us_\ell)\to (\vs,\nabla \us),\quad \P\otimes \d t\enskip \text{almost everywhere in}\enskip L^p,\enskip  p\in[1,\infty)\,.
\\
\label{almost_ev_2}
&f_\ell\to 0\,,\quad \P\otimes \d t\enskip\text{almost everywhere in}\enskip L^\infty.
\end{align}
Next, upon choosing $\epsilon >0$ sufficiently small,
we can sum up the inequalities \eqref{stg_1}, \eqref{stg_2}, \eqref{stg_3}, \eqref{stg_4} and \eqref{stg_5} to obtain
\begin{multline}
\label{pre-vitali}
\E\left[\esssup_{t\in[0,T]}(|g_\ell(t)|_{L^2}^2 + |\nabla f_\ell(t)|^2)
+\int_0^T(|\nabla g_\ell|^2_{L^2}+|\Delta f_\ell|_{L^2}^2)\d t
\right]
\\
\leq 
C\E\Big[\int _0^T \Big(\left|\nabla (\vs_\ell ,\vs)\right|_{L^2}^2 + \left|(\vs_\ell,\vs)\right|_{L^4}^4 + \left|\Delta (\us_\ell,\us )\right|_{L^2}^2 
+  \left|\nabla (\us_\ell,\us )\right|_{L^4}^4 +1\Big)
\\
\times(|g_\ell|_{L^2}^2+ |\nabla f_\ell|_{L^2}^2 + |f_\ell|_{L^\infty}^2)\d t
\Big]\,.
\end{multline}

We can now proceed to the proof of the main result of this paragraph.

\begin{proof}[Proof of Claim \ref{pro:improvement}]
We first need to show the following uniform integrability Property.\\

\hfill\begin{minipage}{\dimexpr\textwidth-2em}
\hspace{1em}{\em There is a full measure set $\Omega_1\in\mathcal A,$ such that for any $\omega \in\Omega _1$
	the family $(t\mapsto|\nabla \vs_\ell (\omega ,t)|^2_{L^2}+|\Delta  \us_\ell (\omega ,t)|^2_{L^2})_{n \in \N}$ is uniformly integrable with respect to the Lebesgue measure $\d t.$
	More precisely, for every $\omega \in\Omega _1$ and  $\epsilon >0,$ there corresponds a $\delta (\epsilon,\omega ) >0$ such that:
	\begin{equation}
	\label{uniform_int} 
	\int_s^t (|\nabla \vs_\ell (\omega ,r)|^2_{L^2}+|\Delta \us_\ell (\omega ,r)|_{L^2}^2)\d r
	\leq \epsilon \enskip ,
	\end{equation} 
	for every $(s,t)\in[0,T]^2$ such that $0\leq |t-s|\leq \delta (\epsilon,\omega ).$
}
\end{minipage}\\

Prior to show \eqref{uniform_int}, note that $\C_{n_\ell }$ is the covariance associated with $\Ws_\ell $ through \eqref{covariance_law}. Thus
	\begin{equation}
	\label{ass:covariance}
	\mathrm{cov}(\Ws_\ell (1),\Ws (1))= \C_{n_\ell } \C^* \,.
	\end{equation}
	
Next, for each $\ell  \in \N,$ and for every $(\omega ,t)\in\Omega \times[0,T],$  by estimating as in \eqref{pre:L2H2}, we see that there exists a constant $C(\rho ,|\C|_{\hs{1}})>0$ such that $\P$-a.s.,
	for any $0\leq s \leq t\leq T:$
	\begin{multline}
	 \label{pre_unif}
	\int_s^t|\nabla \vs_\ell (r)|^2_{L^2}+|\Delta \us_\ell (r)|_{L^2}^2\d r
	\\
	\leq 
	C\sup_{\theta_1,\theta _2 \in[s,t]}|X_\ell (\theta _1)-X_\ell (\theta _2)|
	+C(t-s)\left(1+\sup_{\theta \in[0,T]}|\nabla \us_\ell (\theta )|^2_{L^2}\right),
	\end{multline}
	where we denote by 
	\[
	X_\ell (\theta ):=\int_0^\theta \langle \nabla \us_\ell ,\us_\ell \times \d \nabla \Ws_\ell \rangle.
	\]
	From \eqref{pre_unif} and Proposition \ref{pro:global_estimates} we see that, in order to show the claimed uniform integrability, it is sufficient to show the existence of $\Omega_1 \in\mathcal A$ with full measure such that for each $\omega \in\Omega_1 :$ 
	\begin{equation}
	\label{converse_ascoli}
	X_\ell (\omega ,\cdot )\enskip \text{is uniformly equi-continuous}\,.
	\end{equation}
	
	But from \cite[Thm.\ 4.27 p.\ 103 \& Thm.\ 3.15 p.\ 77]{DPZ} we see that, denoting by $X(\theta ):=\int_0^\theta \langle \nabla \us ,\us \times \d \nabla \Ws \rangle,$ we have by \eqref{ass:covariance}:
	\begin{multline*}
	\E\left[\sup_{0\leq t\leq T}|X_\ell (t)-X (t)|\right]
	\leq C \E\Big[(\int_0^T|\C_{n_\ell }^* \div(\us \times\nabla (\us_\ell - \us )|_{L^2}^2\d r)^{1/2}
	\\
	+(\int_0^T|\C_{n_\ell }^* \div((\us_\ell -\us )\times\nabla \us )|^2_{L^2}\d r)^{1/2}
	+(\int_0^T|(\C_{n_\ell }^* -\C^* )\div(\us \times\nabla \us )|^2_{L^2}\d r)^{1/2}\Big].
	\end{multline*}
	By H\"older Inequality together with \eqref{cv4}, we infer that the above right hand side converges to $0$ as $n ,p \to\infty.$ 
	This shows in particular the existence of $\tilde \Omega \in \mathcal A$ with full probability such that 
	\[
	\omega \in\tilde \Omega
	\quad \Rightarrow\quad 
	X_\ell (\omega ,\cdot )-X(\omega ,\cdot ) \to 0\quad \text{strongly in}\enskip C(0,T;\R).
	\]
	From the converse of Ascoli-Arzela Theorem, we see that \eqref{converse_ascoli} holds if one lets $\Omega _1:=\tilde \Omega.$ This proves uniform integrability and \eqref{uniform_int}.

\bigskip

	Now, making use of Proposition \ref{pro:interp2}, we infer immediately that a similar uniform integrability as that of \eqref{uniform_int} holds for the family
$t\mapsto |\vs_\ell(t)|_{L^4}^4+|\nabla \us_\ell (t) |^4_{L^4}.$
Therefore, using Vitali's convergence theorem, we see thanks to \eqref{almost_ev_1}, \eqref{almost_ev_2} and \eqref{uniform_int} that
\begin{multline}
\label{as:vitali}
\int _0^T (|\nabla (\vs_\ell ,\vs)|_{L^2}^2 + |(\vs_\ell ,\vs)|_{L^4}^4 + |\Delta (\us_\ell ,\us)|_{L^2}^2 +  |\nabla (\us_\ell ,\us)|_{L^4}^4 +1)
\\
\times(|g_\ell|_{L^2}^2+ |\nabla f_\ell|_{L^2}^2 + |f_\ell|_{L^\infty}^2)\d t
\longrightarrow 0 \,,\quad 
\P\text{-a.s.:}
\end{multline}
By dominated convergence, we further see that \eqref{as:vitali} also holds in $L^1(\Omega ).$

Finally, we deduce \eqref{cvgence} from the inequality \eqref{pre-vitali}. Claim \ref{pro:improvement} is now proved.
\end{proof}

As a consequence of the above analysis, we observe the following.
\begin{corollary}
\label{cor:improvement}
The whole sequence $(u_n,v_n;\zeta _n)_{n\in \N}$ converges in probability to a local strong solution $(u,v;\zeta ),$ in the space $\V^0\times\U^1\times[0,T].$
Namely, for any $\epsilon >0$ it holds
\[
\P\Big(\|v_{n,k}-v_k\|_{\V^0(0,\zeta _k)} + \|u_{n,k}-u_k\|_{\U^1(0,\zeta _k)} + |\zeta _{n,k}-\zeta _n|>\epsilon \Big)\to_{n\to\infty} 0\,.
\]
\end{corollary}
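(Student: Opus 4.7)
The strategy is to upgrade the subsequential convergence established in Claim \ref{pro:improvement} to convergence in probability of the full sequence, via the Gy\"ongy--Krylov principle: a sequence of random variables with values in a Polish space $E$ converges in probability if and only if every pair of subsequences admits a further joint subsequence whose law on $E\times E$ is supported on the diagonal. All the hard analytical work has already been done in Claim \ref{clm:tightness}, in \eqref{solution_sko} and in Claim \ref{pro:improvement}; the task now is to recycle these arguments on a doubled system.

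Fix $k \in \mathbb{N}$. Given two arbitrary subsequences $(n_\ell)$ and $(m_\ell)$ of $\mathbb{N}$, I would consider on the original probability space the joint system
\[
\Xi_\ell := \bigl(v_{n_\ell,k}, u_{n_\ell,k}, \zeta_{n_\ell,k},\, v_{m_\ell,k}, u_{m_\ell,k}, \zeta_{m_\ell,k},\, W_{n_\ell}, W_{m_\ell}\bigr).
\]
Tightness of the individual marginals transfers to the product, so that $\{\Xi_\ell\}$ is tight in the appropriate product topology. Prokhorov followed by Skorohod then produces a new probability space carrying a further subsequence converging almost surely to a limit with the same joint law. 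A crucial observation is that on the original space, $W_{n_\ell}$ and $W_{m_\ell}$ are both partial sums of the same series \eqref{nota:Wn} and both converge almost surely to $W$ in $C^\alpha([0,T];H^1)$; consequently their joint Skorohod limits must coincide almost surely with a common Wiener process $\hat W$. Rerunning the argument leading to \eqref{solution_sko}, together with the strong $\V^0 \times \U^1$ convergence of Claim \ref{pro:improvement} on each copy separately, produces two local strong solutions $(\hat v^{(i)}, \hat u^{(i)}; \hat\zeta^{(i)}_k)$, $i=1,2$, of \eqref{SEL}, both driven by $\hat W$, with trajectories in $\V^0 \times \U^1$ up to their respective stopping times.

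The interpolation inequality \eqref{ineq:interp} applied to $(\hat v^{(i)}, \nabla \hat u^{(i)}) \in \V^0 \times \U^1$, combined with the moment bounds of Proposition \ref{pro:global_estimates}, places both solutions in the integrability class \eqref{int_property} required by Theorem \ref{thm:uniqueness} on the stochastic interval $[0, \hat\zeta^{(1)}_k \wedge \hat\zeta^{(2)}_k]$. A localised version of the uniqueness theorem (obtained by simply running the same It\^o computation up to the stopping time $\hat\zeta^{(1)}_k \wedge \hat\zeta^{(2)}_k$) then forces the two trajectories to coincide on this interval. Since each $\hat\zeta^{(i)}_k$ is the first hitting time by the local energy of $(\hat v^{(i)}, \nabla \hat u^{(i)})$ on balls of radius $\rho_k$ of the level $\varepsilon_1$, pathwise equality of the trajectories up to $\hat\zeta^{(1)}_k \wedge \hat\zeta^{(2)}_k$ entails $\hat\zeta^{(1)}_k = \hat\zeta^{(2)}_k$ almost surely. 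Hence the extracted joint limit is supported on the diagonal, and Gy\"ongy--Krylov delivers convergence in probability of the original sequence.

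I expect the main obstacle to be the careful treatment of the random stopping times in the limit: verifying that the uniqueness of Theorem \ref{thm:uniqueness} survives on a stopped stochastic interval is routine, but identifying $\hat\zeta^{(1)}_k = \hat\zeta^{(2)}_k$ requires the continuity of the first-entrance-time functional with respect to the topology of $\V^0 \times \U^1$, which in turn relies essentially on the strong convergence provided by Claim \ref{pro:improvement} together with the local energy estimates of Proposition \ref{pro:local_estimates}.
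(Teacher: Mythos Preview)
Your proposal is correct and follows essentially the same route as the paper: a Gy\"ongy--Krylov argument on doubled subsequences, Skorohod representation with a common limiting Wiener process, uniqueness via Theorem \ref{thm:uniqueness} on the stopped interval, and identification of the two stopping times through the strong $C(\HH^1)$ convergence of Claim \ref{pro:improvement}. The paper organises the argument over all $k$ simultaneously via the product space $\prod_{k\in\N}$ rather than fixing $k$ first, but this is cosmetic; your identification of the stopping-time step as the delicate point, and the reason it works, matches the paper's Step 2 exactly.
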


\begin{proof}
{\itshape Step 1: convergence of the processes.}
Using the same arguments as above, it is immediately checked that given two sequences $m_\ell$ and $n_\ell$,
the sequence of random variable defined by
\[
(X_\ell ,Y_\ell ):=((\mathbf v_{n_\ell },\mathbf u_{n_\ell },\boldsymbol\zeta _{n_\ell }),
(\mathbf v_{m_\ell },\mathbf u_{m_\ell },\boldsymbol\zeta _{m_\ell }))
\]
contains a subsequence converging in distribution to a law $\nu$ supported in $E\times E$ where $E$ denotes the Polish space $\prod_{k\in\N}\V_{T}^0\times\prod_{k\in\N}\U_{T}^1\times\prod_{k\in\N}[0,T].$

Reasoning as in the above paragraph, we see that there exists a filtered probability space $\mathfrak{\tilde P}=(\tilde\Omega,\mathcal{\tilde A},\mathbb{\tilde P};\tilde W)$ endowed with a Wiener process satisfying \eqref{covariance_law}, and a sequence of random variables $(\tilde X_\ell ,\tilde Y_\ell )\sim (X_\ell ,Y_\ell ),$ converging $\mathbb{\tilde P}$-almost surely to an element
\[
(\tilde X,\tilde Y):=\left(\left(\mathbf {\vs},\mathbf{\us},\boldsymbol{\Hat\zeta}\right),
\left(\vvs,\uus,\boldsymbol{\Hat{\Hat \zeta }}\right)\right)\in L^0(\tilde\Omega ;E\times E)
\]
Furthermore, for any $k\in\N,$ the processes $(\vs_k,\us_k)$ and $(\vvs_{k},\uus_k)$ are both martingale solutions defined on the same stochastic basis, up to the stopping time $\kappa _k:=\Hat \zeta _k\wedge \Hat{\Hat{\zeta }}_k.$ By Theorem \ref{thm:uniqueness}, we infer that $(\vs_k,\us_k)|_{[0,\kappa _k]}=(\vvs_k,\uus_k)|_{[0,\kappa _k]}.$ 

~\paragraph{\itshape Step 2: convergence of the stopping times}
We now claim that 
\begin{equation}
\label{equality:zeta}
\kappa _k=\Hat\zeta _k=\Hat{\Hat{\zeta}}_k= \inf\left\{t\in [0,T]:\enskip \sup_{x\in\O}\int_{B(x,\rho _k)}|\nabla \us(t,y)|^2\d y\geq \varepsilon_1\right\}.
\end{equation}
This is indeed a consequence of Claim \ref{pro:improvement}: for any $k\in\N,$ since by \eqref{cvgence} the convergence $\us_{\ell ,k}\to _{\ell \to\infty}\us_k$ is strong in $C(H^1),$ we can write
\begin{multline*}
\sup_{t\in[0,\kappa _k]}\left|\sup_{x\in\O}\int_{B(x,\rho _k)}|\nabla \us_{\ell ,k}(t,y)|^2\d y-\sup_{x\in\O}\int_{B(x,\rho _k)}|\nabla \us_{k}(t,y)|^2\d y
\right|
\\
\leq\sup_{t\in[0,\kappa _k]}\sup_{x\in\O}\int_{B(x,\rho )}\left||\nabla \us_{\ell ,k}(t,y)|^2-|\nabla \us_{k}(t,y)|^2\right|\d y
\to0\,,
\end{multline*}
showing that 
\[
\Hat \zeta _k\equiv\mathrm{a.s.-}\lim_{\ell \to\infty}\Hat \zeta _{\ell ,k}=\zeta (\vs_k,\us_k),
\]
where the above r.h.s.\ denotes the stopping time defined from the solution $(\vs_k,\us_k)$ by \eqref{nota:zeta}. This shows the claimed property.

~\paragraph{Conclusion}
As a consequence, one sees that $(X,Y)$ is supported on the diagonal of $E\times E.$ By the well known Gy\"ony-Krylov Lemma \cite{gyongy1996existence}, this implies that the original sequence $(\mathbf v_{n},\mathbf u_{n},\boldsymbol \zeta _{n})_{n\in\N}$ converges in probability to an element $(\mathbf v,\mathbf u,\boldsymbol \zeta ),$ in the space 
$\prod_{k\in\N}\V^{\delta -1}_{T}\times\prod_{k\in\N}\U_T^\delta \times \prod_{k\in\N}[0,T].$
\end{proof}

\subsection{Controlling the bubbling time}
\label{sec:positivity}

We start with a lemma.

\begin{lemma}
\label{lem:uniform}
Consider a local strong solution $(v,u)$ of \eqref{SEL} on $[0,\tau )\subset(0,T],$ where $\tau $ is a stopping time.
For all $\rho >0,$ and $\delta \in[0,T],$ define the random variable
\[
\varepsilon(\delta ,\rho ):=\sup_{t\in[0,\delta\wedge \tau ),\,x\in\O}\int_{B(x,\rho )}\left(|v (t,y)|^2+|\nabla u (t,y)|^2\right)\d y,
\]
and assume that for some constant $\varepsilon_1>0,$ it holds
\begin{equation}
\label{hyp:epsilon}
\P(\varepsilon  (0,3\rho ) <\varepsilon_1)=1\enskip .
\end{equation} 
Furthermore, denote by $\tilde \pi $ the zero-mean pressure term obtained from equation \eqref{variational_pi}.

Then, for all $\delta \in[0,T]$ and every $\lambda >0,$ we have the estimate
\begin{multline}
\label{est:P_eps}
\P\big(\left\{\varepsilon  (\delta ,\rho ) \geq \varepsilon_1+\lambda \right\}\cap\left\{\delta <\tau  \right\}\big)
\\
\leq \frac{C(\rho ,T,E_0,|\C|_{\hs{1}})}{\lambda }\E \Big[\delta  + \int_0^\delta \big(|v |_{L^4}^4 + |\nabla u |_{L^4}^4+|\tilde\pi |^{4/3}_{L^{4/3}}\big)\d s \Big]
\end{multline}
where the above constant depends on the indicated quantities, but not on the individual element $(v ,u )$ such that $\E[\sup_{t\in[0,T]}(|v(t)|_{L^2}^2 + |\nabla u(t)|^2_{L^2})]\leq E_0.$
\end{lemma}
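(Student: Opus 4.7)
The statement is a quantitative continuity estimate for the local energy, so the natural plan is to combine the local energy balance of Proposition~\ref{pro:local_estimates} with Markov's inequality, while handling the $\sup_{x\in\O}$ in the definition of $\varepsilon(\delta,\rho)$ by means of a finite covering of $\O$.

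I would first choose a family $\{x_j\}_{j=1}^{N_\rho}\subset\O$ with $N_\rho\leq C\rho^{-2}$ whose $\rho$-balls cover $\O$, so that for every $x\in\O$ there exists $j$ with $B(x,\rho)\subset B(x_j,2\rho)$, and consequently
\[
\varepsilon(\delta,\rho)\leq\max_{1\leq j\leq N_\rho}\sup_{t<\delta\wedge\tau }\int_{B(x_j,2\rho)}\bigl(|v|^2+|\nabla u|^2\bigr)\d y\,.
\]
Next I pick smooth cutoffs $0\leq\varphi_j\leq 1$ supported in $B(x_j,3\rho)$, equal to $1$ on $B(x_j,2\rho)$, with $|\nabla\varphi_j|\leq C\rho^{-1}$, so that $\int_{B(x_j,2\rho)}(|v|^2+|\nabla u|^2)\leq 2E^{\varphi_j}(t)$; assumption \eqref{hyp:epsilon} then yields $2E^{\varphi_j}(0)\leq\varepsilon(0,3\rho)<\varepsilon_1$ a.s. Applying Proposition~\ref{pro:local_estimates} with $3\rho$ in place of $\rho$ to each $\varphi_j$, discarding the non-negative dissipation and passing to $\sup_{t\leq\delta}$, I obtain a.s.\ on $\{\delta<\tau\}$
\[
2\sup_{t\leq\delta}E^{\varphi_j}(t)\leq \varepsilon_1 + C(1+\rho^{-2})\,R(\delta) + 2\sup_{s\leq\delta}|X^{\varphi_j}(s)| + 2\,C^{\varphi_j}_{\C}\,\delta\,,
\]
where $R(\delta):=\iint_{[0,\delta]\times\O}\bigl(|v|^3+(|v|+1)|\nabla u|^2+|v||\tilde\pi|\bigr)\d x\d s$ and $C^{\varphi_j}_{\C}\leq |\C|_{\hs{1}}^2$. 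Repeated use of H\"older and Young's inequalities (e.g.\ $|v||\nabla u|^2\leq c(|v|^4+|\nabla u|^4+1)$ and $|v||\tilde\pi|\leq c(|v|^4+|\tilde\pi|^{4/3})$) bounds each integrand in $R$ by a multiple of $|v|^4,\,|\nabla u|^4$ and $|\tilde\pi|^{4/3}$ up to additive constants absorbed in $\delta$, yielding $\E[R(\delta)]\leq C\,\Phi(\delta)$ with
\[
\Phi(\delta):=\E\Big[\delta+\int_0^\delta\bigl(|v|_{L^4}^4+|\nabla u|_{L^4}^4+|\tilde\pi|_{L^{4/3}}^{4/3}\bigr)\d s\Big]\,.
\]

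On the target event $\{\varepsilon(\delta,\rho)\geq\varepsilon_1+\lambda\}\cap\{\delta<\tau\}$ at least one of the three contributions on the right-hand side above must exceed $\lambda/3$ for some $j$. Union-bounding over the $N_\rho\leq C\rho^{-2}$ cutoffs and invoking Markov's inequality immediately takes care of the two deterministic contributions, producing a term of order $C(\rho,|\C|_{\hs{1}})\,\lambda^{-1}\,\Phi(\delta)$. The main obstacle is the martingale part: I would use Burkholder-Davis-Gundy together with the embedding $\hs{1}\hookrightarrow\mathscr{L}(L^2,H^1)$ and $|u|=1$ to obtain
\[
\E\Big[\sup_{s\leq\delta}|X^{\varphi_j}(s)|^{2}\Big]\leq C\,|\C|_{\hs{1}}^2\,\E\Big[\int_0^\delta|\nabla u|_{L^2}^2\d s\Big]\leq C\,|\C|_{\hs{1}}^2\,\Phi(\delta)\,,
\]
via the crude bound $|\nabla u|_{L^2}^2\leq |\O|^{1/2}|\nabla u|_{L^4}^2\leq c(1+|\nabla u|_{L^4}^4)$. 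Chebyshev's inequality combined with a union bound over $j$ then delivers the required tail estimate for $\max_j\sup_s|X^{\varphi_j}|$, and absorbing $N_\rho$ into the $\rho$-dependent constant yields \eqref{est:P_eps}. The delicate point in this last step is the management of the martingale maximum: the Burkholder-Davis-Gundy exponent has to be tuned so that the final dependence on $\Phi(\delta)$ remains of the form displayed in \eqref{est:P_eps}, and the a priori moment estimates of Proposition~\ref{pro:global_estimates} enter here to guarantee integrability of the quadratic variation.
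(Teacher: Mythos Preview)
Your proposal is correct and follows the same route as the paper: a finite covering so that every ball $B(x,\rho)$ sits inside some $B(x_j,2\rho)$, smooth cutoffs $\varphi_j$ supported in $B(x_j,3\rho)$ to which Proposition~\ref{pro:local_estimates} is applied, a union bound over $j$, Markov's inequality, and BDG for the local martingale $X^{\varphi_j}$. The only cosmetic difference is that the paper applies Markov directly to the full right-hand side $V^{i,\rho}$ of \eqref{local_dissipation} rather than splitting into three sub-events, and it uses the $L^1$-BDG inequality (yielding $\E[\langle X^{\varphi_j}\rangle(\delta)]^{1/2}$) rather than your $L^2$ version; in both cases the resulting martingale contribution is controlled via the $E_0$-bound from Proposition~\ref{pro:global_estimates}, which is exactly the point you flag in your last sentence.
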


The proof of the above Lemma relies partially on the following argument, whose proof can be found in \cite{H1}:\\

\hfill\begin{minipage}{\dimexpr\textwidth-2em}
\hspace{1em}{\em For every $\rho >0,$ there exist 
a positive integer $N_\rho$, and points $\{x_\rho ^{1},x_\rho ^2,\dots,x_\rho ^{N_\rho }\}\subset \O$ fulfilling the property:
\begin{equation}
\label{coveringArg}
``\text{for all}\enskip x\in\O,\enskip\text{there exists}\enskip i\in\{1,\dots, N_\rho \}\enskip \text{with}\enskip  B(x,\rho) \subset B(x_\rho ^i,2 \rho)\,."
\end{equation}
}
\end{minipage}

\begin{proof}[Proof of Lemma \ref{lem:uniform}.]
Let  $x^i_\rho ,$ $1\leq i\leq N_\rho $ be as in \eqref{coveringArg}, and
consider test functions $\varphi=\varphi _{\rho,i}\in C^\infty_0(\O)$ with $\spt\varphi\subset B(x^i_\rho ,3 \rho)$
such that
\begin{equation}
\label{nota:varphi_spt}
\mathds{1}_{B(x_\rho ^i, 2\rho)}\leq \varphi\leq \mathds{1}_{B(x_\rho ^i, 3\rho)}\, ,
\quad 
\sup\limits_{x\in B(x^i_\rho,3\rho)}|\nabla \varphi(x)|\leq \frac{C}{\rho}\, ,
\end{equation}
for some $C>0$ independent of $i,\rho $.

By \eqref{hyp:epsilon} together with \eqref{nota:varphi_spt}, we have for every $i\in\{1,\dots,N_\rho \}:$
\[
E ^{i,\rho } (0):=
\E [|\varphi _{\rho ,i}v (0)|^2_{L^2}+ |\varphi _{\rho ,i}\nabla u (0)|_{L^2}^2]<\varepsilon_1
\]
By \eqref{coveringArg},
we see that for $\delta \in[0,\tau),$ it holds
\begin{equation}
\begin{aligned}
\label{series_inclusions}
\big\{\varepsilon  (\delta ,\rho )\geq \varepsilon_1+\lambda \big\}
&\subset
\big\{\exists i, \;  1\leq i \leq N_\rho,\enskip \sup_{t\in [0, \delta]} E ^{i,\rho } (t )\geq \varepsilon_1+\lambda\big\}
\\
&\subset
\cup_{i=1}^{N_\rho }
\big\{\sup_{t\in [0, \delta]}E ^{i,\rho }(t )-E^{i,\rho } (0)\geq \lambda \big\}.
\end{aligned} 
\end{equation}
But from Proposition \ref{pro:local_estimates},
we have:
\begin{equation}
\begin{aligned}
\label{local_dissipation}
\frac{1}{2}\big(E^{i,\rho } (t )-
&E^{i,\rho }(0) \big)
\\
&\leq C (1+ \frac{1}{\rho^2})\iint_{[0,t ]\times\mathbb T^2}\big(|v|^3+(|v|+1)|\nabla u|^2
+|v||\pi-\bar \pi|\big)\d s 
\\
&\quad \quad \quad 
+ X^{i,\rho } (t ) 
+ t|\varphi\nabla \C|_{\mathscr L_2(L^2,L^2)}^2
\\
&=:V^{i,\rho } (t),
\end{aligned}
\end{equation} 
for $t\in[0,\tau ],$ a.s.,
where we let $X^{i,\rho } (t ):=\int_0^t\langle\varphi\nabla u,\varphi u\times\nabla \d W\rangle.$

Next, from \eqref{series_inclusions}, we have the estimate
\begin{equation}
\label{bd:markov}
\begin{aligned}
\P(\varepsilon  (\delta ,\rho )\geq \varepsilon_1\ + \lambda )
&\leq \P\Big(\exists i\in\{1,\dots,N_\rho \},\,\sup_{t\in [0, \delta]}E^{i,\rho }(\delta ) -E(0)^{i,\rho } \geq \lambda \Big)
\\
&\leq\sum\nolimits_{i=1}^{N_\rho }\P\Big(\sup_{t\in[0,\delta]} V^{i,\rho}(t)\geq \frac{\lambda }{2}\Big)
\\
&\leq \sum\nolimits_{i=1}^{N_\rho }\frac{2}{\lambda }\E\Big[\sup_{t\in[0,\delta]} V^{i,\rho}(t)\Big].
\end{aligned}
\end{equation} 
Using H\"older Inequality, and then Burkholder-Davis-Gundy Inequality to estimate the martingale term, 
we end up with the bound
\begin{multline}
\label{ineq:localControlEn}
\E\left[\sup_{0\leq t\leq \delta }V^{i,\rho}(t)\right]
\\
\leq
C (1+\frac{1}{\rho ^2})
\E\Bigg[\delta \sup_{t\in[0,\inf(\delta,\tau))}(|v(t)|_{L^2}^2+|\nabla u(t)|_{L^2}^2)
+\int_0^\delta (|v|_{L^3}^3+|v|^2_{L^2})\d s
\\
+\int_0^\delta |\nabla u|_{L^4}^4\d s 
+\left(\int_0^\delta |v|^4_{L^4}\d s\right)^{1/4}
\left(\int_0^\delta |\tilde\pi|_{L^{4/3}}^{4/3}\d s\right)^{3/4}
\Bigg]
\\
+C(|\C|_{\hs{1}})\E\left[\iint_{[0,\delta ]\times\O}(\varphi^{i,\rho })^4 |\nabla u|^2\d s\right]^{1/2}
+\delta |\varphi^{i,\rho }\nabla \C|_{\mathscr L_2(L^2,L^2)}^2
\\
\leq 
C(\rho ,T,E_0,|\C|_{\hs{1}})\E \Big[\delta  + \int_0^\delta \big(|v|_{L^4}^4 + |\nabla u |_{L^4}^4+|\tilde \pi|^{4/3}_{L^{4/3}}\big)\d s \Big]
\end{multline}
Inserting this estimate into \eqref{bd:markov}, we obtain \eqref{est:P_eps}.
\end{proof}

\subsection{End of the proof of Theorem \ref{thm:main}}
\label{sec:end_proof}

\begin{trivlist}
\item[\em Step 1.] We first prove the following assertion:\\

\hfill\begin{minipage}{\dimexpr\textwidth-2em}
{\em there exists an integer $k_0$ such that for each $k\geq k_0,$
the limit point $\zeta _k$ of the sequence $\{\zeta_{\ell ,k} ,\ell \in\N\}$ defined in \eqref{zeta_tightness} verifies
\begin{equation}
\label{positivity}
\P(\zeta _k>0)=1\, .
\end{equation}
}
\end{minipage}

To prove \eqref{positivity}, observe that since the sequence $\rho _k$ converges to $0$ and since the sequence
$(v_{n,k}(0),u_{n,k}(0))$ converges to $(v_k,u_k)(0)$  strongly in $\L^2\times \HH^1$,
there is an integer $k_0$ such that for each $k\geq k_0:$
\begin{equation}\label{k_geq_k0}
\sup_{x\in\O}\int_{B(x,3 \rho_k) }\Big(|v_{n ,k}(0)|^2+|\nabla u_{n ,k}(0)|^2\Big)\d y\leq \frac{\mu _1^{-1}}{2}\, ,\enskip \text{uniformly in}\enskip n \in\N\,.
\end{equation}
For such $k\geq k_0$ we have in particular
\begin{equation}\label{eval:P_kappa_lk}
\P(\zeta_{k} >0)=1-\P(\zeta _{k} =0)=1-\lim_{\delta \to 0}\P(\zeta _k\leq\delta )
\end{equation}
and, according to the definition of $\zeta _{n,k}$ in \eqref{zeta_tightness}, and $\tau_n$ in \eqref{deftaun},
\begin{multline}
\label{strict}
\left\{\zeta_{n,k} \leq \delta\right\}\subset\left\{\enskip \zeta_{n,k} <\tau_n\enskip\text{and}\enskip \zeta_{n,k} \leq\delta\right\}
\cup\left\{\zeta_{n,k}=\tau_n\enskip \text{and}\enskip \tau_n\leq \delta \right\}
=:\Omega_1\cup\Omega_2\,.
\end{multline}
However, by Theorem \ref{thm:bootstrap}, we have
$\P(\Omega_2)=0$ and therefore, Lemma \ref{lem:uniform} with $\varepsilon_1=\frac12 \mu_1^{-1}$ yields:
\begin{multline*}
\P\big(\{\zeta _{n,k}\leq \delta \}\big)=
\P\big(\{\varepsilon(\delta ,\rho _k)\geq \mu_1^{-1}\}\cap \{  \zeta_{n,k} <\tau_n\}\big)
\\
\leq 2\mu _1C(k,T,E_0,|\C|_{\hs{1}})\E\Big[
\delta +\int_0^\delta \big(|v_{n,k}|_{L^4}^4 + |\nabla u_{n,k}|_{L^4}^4+|\tilde\pi _{n,k}|^{4/3}_{L^{4/3}}\big)\d s 
\Big]\,.
\end{multline*}

Estimating the $L^{4/3}_t(L^{4/3}_x)$-norm of $\tilde \pi$ by its $L^2_t(L^2_x)$-norm, and making use of the estimate \eqref{estim:pi_0}, we further obtain
\[
\P\big(\{\zeta _{n,k}\leq \delta \}\big)
\leq C(\mu _1,k,T,E_0,|\C|_{\hs{1}})\E\Big[
\delta +\int_0^\delta \big(|v_{n,k}|_{L^4}^4 + |\nabla u_{n,k}|_{L^4}^4\big)\d s 
\Big]\,,
\]
which by \eqref{estim:moments}, \eqref{bound:L2H2} and \eqref{ineq:interp} goes to $0$ as $\delta \to0,$ uniformly in $n\in\N.$

Now, writing that for each $n \in\N$:
\[\left\{\zeta _k\leq \delta\right\}\subset\left\{|\zeta _{n ,k}-|\zeta _{n ,k}-\zeta _k||\leq\delta\right\}\, ,\]
it is seen that
\[
 \P\left(\zeta _k\leq \delta\right)\leq \P\left(\zeta _{n ,k}\leq2\delta \right)+\P\left(|\zeta _{n ,k}-\zeta _k|\geq \delta\right)\, .
\]
The conclusion follows by the fact that
$|\zeta _{n ,k}-\zeta _k|\overset{\P}{\rightarrow}0$ as $n \to\infty$,
and the uniform convergence of $\P(\zeta _{n ,k}\leq\delta )$ as $\delta \to0$.
This shows \eqref{positivity}.

~\paragraph{\itshape Step 2. Definition of the solution}
We now define
\begin{equation}
\label{nota:struwe}
(v_\star,u_\star)(v_0,u_0;t):=
\begin{cases}
 (v_k(t),u_k(t))\enskip \text{if}\enskip t\in[0,\zeta _k)\enskip \text{for some}\enskip k\in\N.
\\
0\enskip \text{otherwise.}
 \end{cases}
\end{equation}
Taking the limit as $n\to \infty$ in the identity $\zeta _{n,k}\leq\zeta _{n,k+1}$ (which holds because the sequence $\rho _k$ is non-increasing, see \eqref{zeta_tightness}), one obtains that for each $k\in \N$:
\[
\zeta _k\leq \zeta _{k+1},
\]
and hence by Theorem \ref{thm:uniqueness}, one sees that the definition \eqref{nota:struwe} is not ambiguous.
Hence, we have defined a local strong solution $(v_\star(v_0,u_0),u_\star(v_0,u_0);\zeta _\star(v_0,u_0)),$ where
\[
\zeta _\star(v_0,u_0):=\lim_{k\to\infty}\zeta _k=\sup_{k\in\N}\zeta _k\,.
\]
The proof of the property \ref{prop:3}, as well as the fact that the random variable $J$ is finite a.s., follows exactly the same steps as that of \cite{H1}, and therefore we leave it to the reader.

It is sufficient to show \ref{prop:2} for $j=0,$ hence we let for $k\in\N:$
\[
V_k:=v_k(\sigma _k)
\quad \text{and}\quad 
U_k:=u_k(\sigma _k)\,,
\]
where $\sigma _k\nearrow \tau ^1:= \zeta _\star(v_0,u_0),$ $\P$-a.s.
Notice by Proposition \ref{pro:global_estimates} that for any $m\geq 1,$ the sequence $(V_k)_{k\in\N}$ is bounded in $L^m(\Omega ;\L^2)$, while $(U_k)_{k\in\N}$ is bounded in $L^m(\Omega ;\HH^1).$
On the other hand using the equation and the fact that $\zeta _k$ converges a.s. to $\zeta _\star$,
it is easily seen that $(V_k,U_k)$ is a Cauchy sequence in $L^2(\Omega ;\H^{-1})\times L^2(\Omega ;\LL^2).$
In particular, the sequence $(V_k,U_k)$ can have only one limit point in $L^m(\Omega ;\L^2)\times L^m(\Omega ;\HH^1),$
hence showing that the whole sequence converges towards a uniquely determined couple $(U,V).$
This shows \ref{prop:2}.

In order to prove \ref{prop:4}, by Theorem \ref{thm:uniqueness} and the above step, it is sufficient to show that for each $j\in\{0,\dots , J-1\},$
\begin{equation}
\label{belongs_to}
(v,u)\enskip \text{belongs to}\enskip 
L^4(\Omega ;L^4(\tau _j,\tau _{j+1}';\L^4\times \WW^{1,4}))\,,
\end{equation} 
where $\tau _j'$ is an arbitrary stopping time such that $\tau _j\leq \tau _{j+1}'<\tau _{j+1}.$ But \eqref{belongs_to} holds thanks to \eqref{ineq:interp}.
This finishes the proof of Theorem \ref{thm:main}.
\hfill\qed
\end{trivlist}

\appendix
\section{Appendix: computation of some trace terms}
\label{app:trace}
In this appendix, we denote by $(v,u)$ a strong solution of \eqref{SEL}, and we furthermore assume that it is supported in $\V_T^2\times\U_T^3.$
By the expression ``trace term'' we refer to It\^o-Stratonovitch corrections of the form \eqref{ito_strato}.

Recalling the notation \eqref{canonical_R3},
it is easy to see that for any vector $\zeta \in\R^3:$
\begin{equation}
  \label{geometric_fact}
 \sum_{i=1}^3 (\zeta \times \ee_i)\times \ee_i
 = -2\zeta .
\end{equation}

Similarly, the norm constraint yields that for any index $\alpha \in \{1,2\}:$
\begin{equation}
 \label{geometric_fact_2}
\partial _\alpha \frac{|u|^2}{2}=\partial _\alpha u\cdot u \equiv0.
\end{equation} 

We can now proceed to the computations.

\begin{proof}[Proof of \eqref{trace:1}]
Using \eqref{geometric_fact}, and recalling \eqref{nota:psi_k}, \eqref{psi_l_j},
we have for the first term
\begin{equation}
\label{computations_A1}
\begin{aligned}
 A^1(t)
 &=\frac12\sum_{l,j,\alpha }\langle \partial _\alpha u, (u\times \CC^{j}_l)\times\partial _\alpha \CC^j_l\rangle 
\\
 &=-\sum_{l,j,\alpha }\langle \partial _\alpha u, u (\C_l\partial _\alpha \C_l)\rangle 
\\
&=0,
\end{aligned}
\end{equation} 
where we have also used \eqref{geometric_fact_2}.

Now, from the skew-symmetry of the vector product and properties \eqref{geometric_fact} 
and \eqref{geometric_fact_2}, we have for the second term:
\begin{equation}
\label{computations_A2}
\begin{aligned}
 A^2(t)
 &\equiv\frac12\sum_{l,j,\alpha }\big [\langle u\times\partial _\alpha  \CC^j_l,
\partial _\alpha u\times \CC^j_l \rangle
+\langle u\times\partial _\alpha  \CC^j_l,u\times \partial _\alpha \CC^j_l \rangle
\big ]
\\
&=-\frac12\sum_{l,j,\alpha }\big [
\langle (u\times\partial _\alpha  \CC^j_l)\times \CC^j_l,\partial _\alpha u \rangle
+2\langle (u\times\partial _\alpha  \CC^j_l)\times \partial _\alpha \CC^j_l,u \rangle
\big ]
\\
&=\sum_{l,\alpha }\big [
\langle u (\partial _\alpha \C_l)\C_l,\partial _\alpha u\rangle 
+2|\partial _\alpha  \C_l|^2_{L^2}
\big ]
\\
&=2\sum_{l}|\nabla \C_l|_{L^2}^2,
\end{aligned}
\end{equation}
which is the claim.
\end{proof}

We now aim to compute the It\^o-Stratonovitch correction for the semimartingale $Y$ defined in \eqref{nota:Y}.
We have, using coordinates:
\begin{multline}
\label{decomp_M_app}
Y(t)= 2\int_0^t \langle \ttt,\partial _\beta u\times\circ \d \partial _\beta W\rangle
+\int_0^t\langle \ttt,u\times\circ\d  \Delta W\rangle
-\int_0^t\langle \ttt, v^i(u\times\circ\d  \partial _iW)\rangle
\\
+2\int_0^t\langle \Delta u,\partial _\beta u\times\circ\d \partial _\beta W\rangle
+\int_0^t\langle \Delta u,u\times\circ\d \Delta W\rangle
=:\sum\nolimits_{\gamma =1}^5Y^\gamma (t)\,,
\end{multline}
where we recall that
$\ttt=\tt_u-v\cdot \nabla u= \Delta u+u|\nabla u|^2 -v\cdot \nabla u$.

\begin{claim}
\label{clm:M}
Under the condition $\C\in\hs{4},$ the process $Y$ is a well-defined semi-martingale, 
whose decomposition writes as
\begin{multline}
\label{ito_cor}
Y(t)
=
\sum\nolimits_{l\in\N}\iint_{[0,t]\times\O}\Big[
2(u\times\partial _\alpha \CC_l^j\cdot \partial _\alpha u)
(u\cdot \partial _\beta u\times\partial _\beta \CC_l^j)
\\
+4(\partial _\alpha \C_l \partial _\beta \C_l )\partial _\alpha u\cdot \partial _\beta u
+|\Delta \C_l |^2  -(v\cdot \nabla \C_l )^2
\\
+|\nabla u|^2|\nabla \C_l|^2
+2\sum_{\alpha ,\beta }(\partial _\alpha \C_l \partial _\beta  \C_l )\partial _\alpha u\cdot \partial _\beta u
+|\Delta \C_l |^2
\Big]\d x\d s
+\hat Y(t)
\end{multline}
where $\hat Y(t)$ is a martingale.

Moreover, its quadratic variation is estimated above as
\begin{multline}
\label{quad:Y}
\langle Y\rangle(t)
\leq 2
\int_0^t\Big\{
|\C^*\div(u\times\nabla \ttt)|_{L^2}^2
+|\C^*\Delta (u\times\ttt)|_{L^2}^2
+|\C^*\partial _i (v^iu\times\ttt)|_{L^2}^2
\\
+|\C^*\Delta (-u\times \Delta u)|_{L^2}^2
+|\C^*\div (u\times \nabla \Delta u)|_{L^2}^2
\Big\}\d s
\end{multline}
\end{claim}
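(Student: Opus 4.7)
The approach is to handle each of the five Stratonovich integrals $Y^\gamma$, $\gamma=1,\dots,5$, in the decomposition \eqref{decomp_M_app} separately, applying the It\^o-Stratonovich conversion formula \eqref{ito_strato} to each. For each $\gamma$, I view the integrand as a functional $\Phi_\gamma(u)$ (with $v$ treated as a parameter where it appears), compute its Fr\'echet derivative, and evaluate the correction $\tfrac12\sum_{l,j}[\Phi_\gamma'(u)\cdot(u\times\CC_l^j)](\CC_l^j)$. Summation over $l\in\N$ and $j=1,2,3$ then yields the drift $A$ appearing in \eqref{ito_cor}, while the remaining It\^o stochastic integrals assemble into the martingale part $\hat Y$, giving the decomposition $Y = A + \hat Y$ and the identity \eqref{ito_cor}.

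The computation of each drift correction mirrors that of $A^1$ and $A^2$ in \eqref{computations_A1}--\eqref{computations_A2}, relying on the two geometric identities \eqref{geometric_fact} and \eqref{geometric_fact_2}, together with the orthogonality $\ttt\perp u$ established in \eqref{ttt_perp_u}. A number of candidate correction terms vanish outright: those containing $u\cdot \partial_\alpha u$ disappear by \eqref{geometric_fact_2}, while patterns of the form $(\cdot\times \CC_l^j)\times\CC_l^j$ summed over $j=1,2,3$ collapse via \eqref{geometric_fact} to produce factors of $-2\C_l^2$. After expanding each integrand with the Leibniz and chain rules, applying these identities, and performing appropriate integration by parts in the spatial variable, the drift assembles into the explicit integrand of \eqref{ito_cor}: the $|\Delta\C_l|^2$ contributions originate from $Y^2$ and $Y^5$, the $|\nabla u|^2|\nabla\C_l|^2$ and $(\partial_\alpha\C_l\,\partial_\beta\C_l)\partial_\alpha u\cdot\partial_\beta u$ terms from $Y^1$ and $Y^4$, and the term $-(v\cdot\nabla\C_l)^2$ from $Y^3$.

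For the bound \eqref{quad:Y}, I first observe that $\langle Y\rangle = \langle\hat Y\rangle$ since $A$ has finite variation. For each $\gamma$, I rewrite the component It\^o integral $\hat Y^\gamma$ in the form $\int_0^t\langle G_\gamma, dW\rangle$ via integrations by parts that move the spatial derivatives from $W$ onto the remaining factors. For example, the cyclic identity $a\cdot(b\times c)=(a\times b)\cdot c$ together with two successive integrations by parts turns $\hat Y^1$ into $2\int_0^t\langle \Delta(u\times\ttt)-\div(u\times\nabla\ttt), dW\rangle$, yielding one contribution of $|\C^*\Delta(u\times\ttt)|^2_{L^2}$ type and one of $|\C^*\div(u\times\nabla\ttt)|^2_{L^2}$ type after applying $|a-b|^2\leq 2(|a|^2+|b|^2)$. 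Analogous rearrangements apply to $\hat Y^2, \hat Y^3$ (producing the $\Delta(u\times\ttt)$ and $\partial_i(v^i u\times\ttt)$ contributions) and to $\hat Y^4, \hat Y^5$ (producing the $\Delta(-u\times\Delta u)$ and $\div(u\times\nabla\Delta u)$ contributions). The It\^o isometry $\langle\int_0^\cdot\langle G,dW\rangle\rangle_t = \int_0^t|\C^* G|_{L^2}^2\,ds$, applied to each piece and summed with the inequality $|\sum_\gamma a_\gamma|^2 \leq N\sum_\gamma|a_\gamma|^2$, then produces \eqref{quad:Y} up to an adjustment of the overall constant.

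The main obstacle will be the algebraic bookkeeping for the term $Y^3 = -\int\langle\ttt, v^i(u\times\circ d\partial_i W)\rangle$, whose integrand has $u$-dependence both explicitly and through $\ttt = \Delta u + u|\nabla u|^2 - v\cdot\nabla u$. The Fr\'echet derivative $\Phi_3'(u)$ produces several contributions of different orders in derivatives of $u$, and verifying that after applying the geometric identities they reduce precisely to the term $-(v\cdot\nabla\C_l)^2$ appearing in \eqref{ito_cor} demands care: the cancellations rely crucially on $\div v = 0$ and on $\ttt\perp u$. All individual computations are elementary---each is of the same nature as those in \eqref{computations_A1}--\eqref{computations_A2}---but their number is the reason the argument is relegated to this appendix.
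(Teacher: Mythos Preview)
Your approach is essentially the same as the paper's: you treat each $Y^\gamma$ separately, apply the It\^o--Stratonovich conversion \eqref{ito_strato}, and simplify the resulting correction terms using \eqref{geometric_fact}, \eqref{geometric_fact_2} and $\ttt\perp u$, exactly as in the paper's computations of $Y^1,\dots,Y^5$. Your attribution of the various drift pieces to the correct $Y^\gamma$ is accurate (in particular the cancellation of the cross term $\langle v^i\partial_i\C_l,\Delta\C_l\rangle$ between $Y^2$ and $Y^3$ is implicit in your description), and for the quadratic variation you give somewhat more detail than the paper---which merely cites \cite[Thm.~4.27]{DPZ} and the inequality $\langle Y\rangle\le C\sum_\gamma\langle Y^\gamma\rangle$---by spelling out the integrations by parts that move derivatives off $W$; this is a correct elaboration of what the paper leaves implicit.
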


\begin{proof}
{\itshape Computation of $Y^1 .$}
Similarly as above, we have, recalling \eqref{ito_strato},
\[
\begin{aligned}
&Y^1(t)-2\int_0^t \langle \ttt,\partial _\beta u\times\d  \partial _\beta W\rangle
\\
&=\sum\nolimits_{l ,j}\int_0^t\Big[\langle \ttt,(\partial _\beta u\times\CC_l^j)\times\partial _\beta \CC_l^j\rangle
+\langle \ttt,(u\times\partial _\beta \CC_l^j)\times\partial _\beta \CC_l^j\rangle
\\
&\quad \quad \quad \quad 
+\langle \Delta u\times\CC_l^j ,\partial_\beta u \times\partial_\beta\CC_l^j\rangle
+2\langle \partial _\alpha u\times\partial _\alpha \CC_l^j ,\partial_\beta u \times\partial_\beta\CC_l^j\rangle
\\
&\quad \quad \quad \quad 
+\langle u\times\Delta \CC_l^j,\partial_\beta u \times\partial_\beta\CC_l^j\rangle
+\langle u\times\CC_l^j|\nabla u|^2,\partial_\beta u \times\partial_\beta\CC_l^j\rangle
\\
&\quad \quad 
+2\langle u(\partial _\alpha u\times \CC_l^j\cdot \partial _\alpha u) ,\partial_\beta u \times\partial_\beta\CC_l^j\rangle
+2\langle u(u\times \partial _\alpha \CC_l^j\cdot \partial _\alpha u),\partial_\beta u \times\partial_\beta\CC_l^j\rangle
\\
&\quad \quad \quad 
-\langle v^i \partial _iu\times\CC_l^j,\partial_\beta u \times\partial_\beta\CC_l^j\rangle
-\langle v^iu\times\partial _i\CC_l^j,\partial_\beta u \times\partial_\beta\CC_l^j\rangle
\Big]\d s
\\
&=\sum\nolimits_{l }\int_0^t\Big[\sum_{j}\left\langle \ttt,(\partial _\beta u\times\CC_l^j)\times\partial _\beta \CC_l^j - (\partial_\beta u \times\partial_\beta\CC_l^j)\times\CC_l^j\right\rangle
\\
&\quad \quad \quad 
+4\langle \partial _\alpha u\partial _\alpha \C_l,\partial_\beta u \partial_\beta\C_l\rangle
+2\langle u(u\times \partial _\alpha \CC_l^j\cdot \partial _\alpha u),\partial_\beta u \times\partial_\beta\CC_l^j\rangle
\Big]\d s
\\
&=\sum\nolimits_{l }\int_0^t\Big[
4\langle \partial _\alpha u\partial _\alpha \C_l,\partial_\beta u \partial_\beta\C_l\rangle
+2\langle u(u\times \partial _\alpha \CC_l^j\cdot \partial _\alpha u),\partial_\beta u \times\partial_\beta\CC_l^j\rangle
\Big]\d s
\,,
\end{aligned}
\]
thanks to \eqref{geometric_fact}.

\bigskip

\paragraph{\itshape Computation of $Y^2 $}
We have
\[
\begin{aligned}
&Y^2(t)-\int_0^t \langle \ttt,u\times\d \Delta W\rangle
\\
&=\frac12\sum\nolimits_{l ,j}\int_0^t\Big[\langle \ttt,(u\times\CC_l^j)\times\Delta \CC_l^j\rangle
+\langle \Delta u\times\CC_l^j ,u\times\Delta\CC_l^j\rangle
\\
&\quad \quad \quad \quad \quad 
+2\langle \partial _\alpha u\times\partial _\alpha \CC_l^j ,u\times\Delta\CC_l^j\rangle
+\langle u\times\Delta \CC_l^j,u\times\Delta\CC_l^j\rangle
\\
&\quad \quad \quad 
+\langle u\times\CC_l^j|\nabla u|^2,u\times\Delta\CC_l^j\rangle
+2\langle u(\partial _\alpha u\times \CC_l^j\cdot \partial _\alpha u) ,u\times\Delta\CC_l^j\rangle
\\
&\quad \quad \quad \quad 
+\langle u(u\times \partial _\alpha \CC_l^j\cdot \partial _\alpha u),u\times\Delta\CC_l^j\rangle
-\langle v^i \partial _iu\times\CC_l^j,u\times\Delta\CC_l^j\rangle
\\
&\quad \quad \quad \quad \quad 
\quad \quad \quad \quad \quad 
-\langle v^iu\times\partial _i\CC_l^j,u\times\Delta\CC_l^j\rangle
\Big]\d s
\\
&=\sum_{l }\int_0^t\Big[
\int_\O|\Delta \C_l|^2\d x
-\langle v^i\partial _i\C_l ,\Delta\C_l\rangle
\Big]\d s\,.
\end{aligned}
\]

\paragraph{\itshape Computation of $Y^3 $}
Proceeding similarly, we have
\[
\begin{aligned}
&Y^3(t)-\int_0^t\langle \ttt,v^iu\times\d \partial _iW\rangle
\\
&=
\frac12\sum\nolimits_{l ,j}\int_0^t\Big[\langle \ttt,v^i(u\times\CC_l^j)\times\partial _i\CC_l^j \rangle
+\langle \Delta u\times\CC_l^j,v^iu\times\partial _i\CC_l^j\rangle
\\
&\quad \quad \quad 
+2\langle \partial _\alpha u\times\partial _\alpha \CC_l^j,v^iu\times\partial _i\CC_l^j\rangle
+\langle u\times\Delta \CC_l^j,v^iu\times\partial _i\CC_l^j\rangle
\\
&\quad \quad \quad 
+\langle u\times\CC_l^j |\nabla u|^2,v^iu\times\partial _i\CC_l^j\rangle
+2\langle u(\partial _\alpha u\times\CC_l^j \cdot \partial _\alpha u),v^iu\times\partial _i\CC_l^j\rangle
\\
&\quad \quad \quad \quad 
+\langle u(u\times\partial _\alpha \CC_l^j\cdot \partial _\alpha u),v^iu\times\partial _i\CC_l^j\rangle
-\langle v^k\partial _ku\times\CC_l^j,v^iu\times\partial _i\CC_l^j\rangle
\\
&\quad \quad \quad \quad \quad 
\quad \quad \quad \quad \quad 
-\langle v^ku\times\partial _k\CC_l^j,v^iu\times\partial _i\CC_l^j\rangle
\Big]\d s
\end{aligned}
\]
Simplifications lead finally to
\[
\begin{aligned}
&Y^3(t)-\int_0^t\langle \ttt,v^iu\times\d \partial _iW\rangle
\\
&=
\frac12\sum_{l }\int_0^t\Big[
\langle \Delta u\cdot u,\C_lv^i\partial _i\C_l\rangle
+2\langle \Delta \C_l ,v^i\partial _i\C_l \rangle
+2\langle  |\nabla u|^2,v^i(\partial _i\C_l)\C_l \rangle
\\
&\quad \quad 
-2\langle v^k\partial _k\C_l ,v^i\partial _i\C_l \rangle
\Big]\d s
\\
&=
\sum_{l}\int_0^t\Big[
\langle \Delta \C_l ,v^i\partial _i\C_l \rangle
-\langle v^k\partial _k\C_l ,v^i\partial _i\C_l \rangle
\Big]\d s\,.
\end{aligned}
\]

\bigskip

\paragraph{\itshape Computation of $Y^4.$}
We have
\[
\begin{aligned}
&Y^4(t)-\int_0^t\langle \Delta u,\partial _\beta u\times\d \partial _\beta W\rangle
\\
&=\frac12\sum_{l ,j}\int_0^t\Big[
\langle \Delta u,(\partial _\beta u\times\CC_l^j)\times\partial _\beta \CC_l^j\rangle
+\langle \Delta u,(u\times\partial _\beta \CC_l^j)\times\partial _\beta \CC_l^j\rangle
\\
&\quad \quad \quad \quad 
+\langle \Delta u\times\CC_l^j,\partial _\beta u\times\partial _\beta \CC_l^j\rangle
+2\langle \partial _\alpha u\times\partial _\alpha \CC_l^j,\partial _\beta u\times\partial _\beta \CC_l^j\rangle
\\
&\quad \quad \quad \quad \quad 
\quad \quad \quad \quad \quad 
+\langle u\times\Delta \CC_l^j,\partial _\beta u\times\partial _\beta \CC_l^j\rangle
\Big]\d s,
\\
&=\frac12\sum_{l }\int_0^t\Big[
2\langle |\nabla u|^2,(\partial _\beta \C_l)^2\rangle
+4\langle \partial _\alpha u\partial _\alpha \C_l ,\partial _\beta u\partial _\beta \C_l\rangle
\Big]\d s.
\end{aligned}
\]

\paragraph{\itshape Computation of $Y^5$}
Similarly:
\[
\begin{aligned}
&Y^5(t)-\int_0^t\langle \Delta u,u\times\d \Delta W\rangle
\\
&=\frac12\sum_{l,j}\int_0^t\Big[
\langle \Delta u,(u\times\CC_l^j)\times\Delta \CC_l^j\rangle
\\
&\quad \quad \quad \quad 
+\langle \Delta u\times\CC_l^j ,u\times\Delta \CC_l^j\rangle
+2\langle \partial _\alpha u\times\partial _\alpha \CC_l^j,u\times\Delta \CC_l^j\rangle
\\
&\quad \quad \quad \quad \quad 
\quad \quad \quad \quad \quad 
+|u\times\Delta \CC_l^j|_{L^2}^2
\Big]\d s
\\
&=\frac12\sum_{l}\int_0^t
2|\Delta \C_l |_{L^2}^2
\d s\,.
\end{aligned}
\]

Summing the above contributions, we end up with \eqref{ito_cor}.

\bigskip

\paragraph{\itshape Quadratic variation of $Y$}
The estimate \eqref{quad:Y} is a simple consequence of \cite[Theorem 4.27]{DPZ} applied to each $Y^\gamma ,\gamma =1,\dots,5,$ together with the fact that
$\langle Y\rangle(t)\leq C\sum_{\gamma =1}^5\langle Y^\gamma \rangle(t)\,.$
This finishes the proof.
\end{proof}

\end{document}